\let\footnote=\endnote
\theoremstyle{TH}
\newcommand{\iz}[1]{\textbf{\textcolor{blue!67}{#1}}}
\tikzstyle{conceptarrow} = [dashed, line width=0.25mm,<->, ]
\tikzstyle{arrow} = [line width=0.25mm,->, ]
\tikzstyle{arrowlight} = [line width=0.25mm,>=stealth]
\tikzstyle{noBox} = [rectangle, rounded corners, minimum width=1.cm, minimum height=1.5cm, align=center, text centered] 
\tikzstyle{noBoxWide} = [rectangle, rounded corners, minimum width=2cm, minimum height=1.5cm,text width=2cm, text centered]
\tikzstyle{process} = [rectangle, rounded corners, minimum width=2.25cm, minimum height=1.cm, align=center, text centered, font=\bf\sffamily]
\tikzstyle{box} = [rectangle, ultra thick, rounded corners, minimum width=2cm, minimum height=1.cm, text width=2cm, text centered, draw=black,fill=black!15!white]
\tikzstyle{dashedBorder} = [draw, rounded corners = 14pt, dashed, thick]
\newcommand{\vast}{\bBigg@{4}}
\newcommand{\Vast}{\bBigg@{5}}
\newcommand{\prob}[1]{\mathbf{#1}}
\newcommand{\FOP}{\prob{FOP}}
\newcommand{\FOPL}{\prob{FOP}\text{--}\prob{L}}
\newcommand{\FOPC}{\prob{FOP}\text{--}\prob{CON}}
\newcommand{\FOPCSF}{\prob{FOP}\text{--}\prob{CON}\text{--}\prob{SF}}
\newcommand{\FOPCVX}{\prob{FOP}\text{--}\prob{CVX}}
\newcommand{\FOPMDP}{\prob{FOP}\text{--}\prob{MDP}}
\newcommand{\FOPMIP}{\prob{FOP}\text{--}\prob{MI}}
\newcommand{\IOPp}{\prob{IOP}\text{--}\prob{C}}
\newcommand{\IOPd}{\prob{IOP}\text{--}\prob{DD}}
\newcommand{\IOPdr}{\prob{IOP}\text{--}\prob{DD}\text{--}\prob{R}}
\newcommand{\IOPddro}{\prob{IOP}\text{--}\prob{DD}\text{--}\prob{DRO}}
\newcommand{\dataset}{\set{D}}
\newcommand{\empiricaldist}{\field{P}_N}
\newcommand{\Xfeas}{\set{X}}
\newcommand{\Xopt}{\set{X}^{\mathrm{opt}}}
\newcommand{\Thetainv}{\boldsymbol{\Theta}^{\mathrm{inv}}}
\begin{document}

\RUNAUTHOR{Chan, Mahmood, Zhu}
\RUNTITLE{A Review of Inverse Optimization}

\TITLE{Inverse Optimization: Theory and Applications}

\ARTICLEAUTHORS{%
\AUTHOR{Timothy C. Y. Chan}
\AFF{Department of Mechanical and Industrial Engineering, University of Toronto, ON, Canada \\ \EMAIL{tcychan@mie.utoronto.ca}}
\AUTHOR{Rafid Mahmood}
\AFF{NVIDIA Corporation, \EMAIL{rafid.mahmood@mail.utoronto.ca}}
\AUTHOR{Ian Yihang Zhu}
\AFF{Department of Mechanical and Industrial Engineering, University of Toronto, ON, Canada \\ \EMAIL{i.zhu@mail.utoronto.ca}}
}


\ABSTRACT{Inverse optimization describes a process that is the ``reverse" of traditional mathematical optimization. Unlike traditional optimization, which seeks to compute optimal decisions given an objective and constraints, inverse optimization takes decisions as input and determines an objective and/or constraints that render these decisions approximately or exactly optimal. In recent years, there has been an explosion of interest in the mathematics and applications of inverse optimization. This paper provides a comprehensive review of both the methodological and application-oriented literature in inverse optimization. }


\maketitle



\vspace{-0.4cm}

\section{Introduction}

In traditional mathematical optimization, one takes as input an objective and a set of constraints to generate an optimal decision. In inverse optimization, decisions are given as input and an objective and/or constraints is the output. 
Specifically, the goal of inverse optimization is to determine parameters of an optimization model---the ``forward'' model---that render a set of decisions approximately or exactly optimal with respect to this forward model. Solving for this set of parameters is itself an optimization model, known as the ``inverse'' model.

In recent years, there has been an explosion of interest in the mathematics and applications of inverse optimization. For example, it has found diverse applications in areas such as transportation, healthcare, and power systems, where we can use an observed data set of decisions to estimate a decision-making model that best reproduces these observations. Decision data can include individual routing choices, electricity consumption patterns or medical treatments, and the inverse optimization model can be used to estimate route-choice preferences, utility functions or clinical treatment objectives that are most consistent with the observed decisions. In these examples, inverse optimization provides a mathematical framework for estimating latent parameters and subjective preferences within decision-making problems.

The tractability of an inverse optimization problem depends on the complexity of the forward model and the desired properties sought in the inverse model. Different applications require different modeling assumptions, leading to many different inverse models and corresponding solution methods. Nonetheless, all models can broadly be characterized as a combination of elements along the following three major dimensions (see Figure~\ref{fig:IO_taxonomy}):

\begin{itemize}
    \item \textbf{Forward problem structure:} 
    This dimension describes the structure of the forward model. Possibilities include linear, conic, convex, or discrete optimization models. Sequential decision-making models (e.g., Markov decision processes) are also possible.

    \item \textbf{Parameter type:} 
    The parameters to be estimated may reside in the objective, constraints, or both. The structure and tractability of the inverse optimization problem depend heavily on which parameters are estimated and the forward problem structure.

    \item \textbf{Model-data fit:} 
    Depending on the application, it may be necessary to achieve a ``perfect fit'' between the estimated parameters of the forward model and the decisions, i.e., finding parameters such that the decisions are optimal. In problems where this is unnecessary or impossible, the inverse problem may instead aim to maximize a suitable measure of fitness.

\end{itemize}

Since the treatment of the first two dimensions is relatively similar across the third dimension, we specifically assign the labels of \emph{Classical Inverse Optimization} and \emph{Data-Driven Inverse Optimization} to distinguish between the models that assume perfect model-data fit and those that do not. Classical problems, which dominate the early literature on inverse optimization, are often used to introduce new reformulation techniques of the inverse problem. As we will discuss in later sections, the models are also relevant in applications where it is necessary to find parameters in the forward model that render the decisions optimal. On the other hand, data-driven models draw upon the techniques and methods employed in classical models but also consider an additional layer of complexity arising from the possibility of imperfect model-data fit. Such models are relevant when there is model mis-specification or noisy observations in the decision data. This literature is distinguished by loss functions that correspond to various sub-optimality measures.

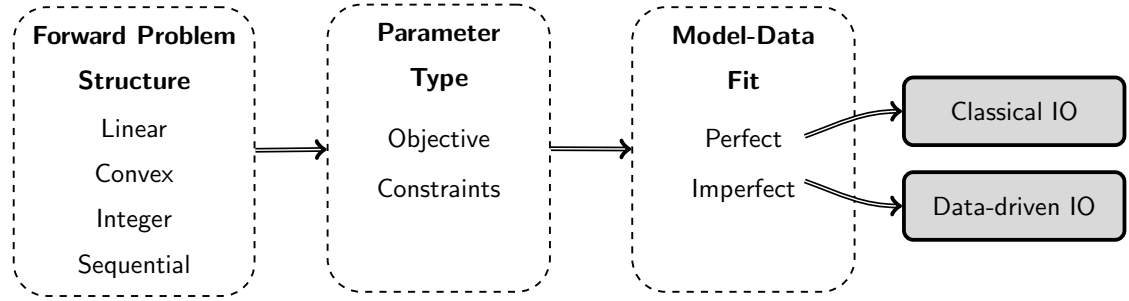
\begin{figure}[t]
    \centering
    \scalebox{0.9}{%
    \tikzset{every picture/.style={line width=0.75pt}} 

\begin{tikzpicture}[font=\sffamily]

    \node(FPS) [process, minimum width=3cm] {Forward Problem \\ Structure};
    \node(FPSprob) [noBox, below of = FPS, yshift=-0.5cm] {\\[2em] Linear \\ Convex \\ Integer \\ Sequential};
    
    \node(PT) [process, right of = FPS, xshift=3.5cm, minimum width=3cm] {Parameter \\ Type};
    \node(PTprob) [noBox, below of = PT, yshift=-0.5cm] {\\[2.2em] Objective \\[0.2em] Constraints \\[1.2em] ~};
    
    \node(Ass) [process, right of = PT, xshift=3.5cm, minimum width=3cm] {Model-Data \\ Fit};
    \node(Assprob) [noBox, below of = Ass, yshift=-0.5cm] {\\[2.2em] Perfect \\[0.2em] Imperfect \\[1.2em] ~ };
    
    \node [dashedBorder, fit=(FPS)(FPSprob)] (FPSborder) {};
    \node [dashedBorder, fit=(PT)(PTprob)] (PTborder) {};
    \node [dashedBorder, fit=(Ass)(Assprob)] (Assborder) {};

    \draw[arrow, double, ->] (FPSborder) to (PTborder);
    \draw[arrow, double, ->] (PTborder) to (Assborder);
    
    \node[box, minimum width=3cm, text width=3cm, right of = Assprob, xshift=3cm, yshift=0.7cm] (CIO) {Classical IO};
    \node[box, minimum width=3cm, text width=3cm, right of = Assprob, xshift=3cm, yshift=-0.7cm] (DDIO) {Data-driven IO};
        
    \draw[arrow, double, out=20, in=180] (Assprob)  to (CIO);
    \draw[arrow, double, out=-20, in=180] (Assprob) to (DDIO);

\end{tikzpicture}
    }
    \caption{A taxonomy of inverse optimization models. The arrows correspond to the sequence of steps used to build an inverse model. Specifically, a forward model is proposed, a subset of parameters are identified, and the properties that the parameters must satisfy are selected.} \label{fig:IO_taxonomy}
\end{figure}

In this paper, we provide (i) a systematic framework for describing and categorizing inverse optimization models, and (ii) a comprehensive review of both methodological and application-oriented literature. We first review different techniques, solution methods, and model properties of all existing inverse models using the above taxonomy. We then consolidate the extensive list of application areas in which inverse optimization is increasingly applied. To our knowledge, there is only one prior survey on inverse optimization \citep{heuberger2004inverse}. That paper provides a theoretical review of select classical inverse models over particular 0-1 combinatorial forward problems, which was the focus of much early research in inverse optimization. Since then, we have observed a breadth of new models, methods, and applications, many specific to the data-driven literature. Our review thus covers techniques applicable to broad classes of problem structures, considers new developments specific to data-driven models, and examines the extensive list of modern applications.

Finally, we briefly remark that inverse optimization falls under the umbrella of ``inverse problems'', in which observational data on the ``effects'' of a process are used to estimate unobservable ``causes'' generating outcomes~\citep{tarantola2005inverse, kaipio2006statistical}. 
Inverse optimization is distinguished from other inverse problems by the fact that the process is a decision-making problem modeled via mathematical programming, while the data consists of solutions to these problems; thus, the methods and applications of inverse optimization differ from the broader literature.


\subsection{Notation}
Vectors and matrices are denoted in bold and sets in calligraphic. 
Let $\field{R}^n$, $\field{R}^n_+$, and $\field{Z}^n$ be the set of real-valued vectors, non-negative vectors, and integer vectors of dimension $n$, respectively. Similarly, $\field{S}^{n \times n}_+$ denotes the set of positive semi-definite matrices of size $n\times n$. 
$\Ind\{\cdot\}$ denotes the indicator function. Let $\interior(\mC)$ be the interior of a set $\mC$. Similarly, let $\extreme(\mC)$ denote the set of extreme points of the convex hull of $\mC$. A unit probability mass at $\bhx$ is represented by $\delta_{\bhx}$.
We denote probability distributions by $\field{P}$ or $\field{Q}$. 
Given a set $\{\bhx_i\}_{i=1}^N$ of $N$ points, we denote the empirical distribution as $\field{P}_N := \sum_{i=1}^N \delta_{\bhx_i}$. 
Finally, $\| \cdot \|_p$ denotes a general $p$-norm for $p \geq 1$. 

\section{Fundamentals of Inverse Optimization: A Road Map of This Paper}
\label{sec:problem_def}

Inverse optimization modeling begins by defining a parametric forward optimization model that represents the decision-generating process of one or more agents. Then, given a set of target or observed decisions, the inverse optimization model determines parameters of this forward model that render the decisions approximately or exactly optimal. Figure~\ref{fig:io_pipeline} illustrates these relationships.


\begin{figure}[t]
\centering
\scalebox{0.9}{%
\begin{tikzpicture}[font=\sffamily]
        \node[cloud, cloud puffs=15.7, cloud ignores aspect, minimum width=4cm, minimum height=2cm, align=center, draw] (fwd) at (0, 0) {Decision-generating \\ process(es)};

        \draw (-3,0)    node[anchor=east, minimum width=2cm, text width=1cm, align=center] (agentIn) {Agent(s)};
        \draw [arrow] (agentIn) -- (fwd);
        
        \draw (-3,-3.5)   node[anchor=east, text width=2cm, align=center] (modelOut) {Estimated \\ parameters};
        \draw (0,-3.5)    node[box, minimum width=4cm, text width=4cm, minimum height=1.25cm] (inv) {Inverse optimization \\ model};
        \draw (0,-2)    node[minimum width=4cm, align=center, text width=6cm] (fwdmodel) {Parametric forward model};

        \draw (3,-2)    node[anchor=west, text width=2cm, align=center] (obsIn) {Decisions};

        \draw [arrow, dashed] (fwd) -- (fwdmodel);
        \draw [arrow] (fwdmodel) -- (inv);
        \draw [arrow] (inv) -- (modelOut);

        \draw [arrow] (obsIn) |- (inv);
        \draw [arrow] (fwd) -| (obsIn);

\end{tikzpicture}
}
    \caption{In an inverse optimization problem, agents generate decisions that the inverse optimizer observes. The inverse optimizer proposes a forward model of agent behavior and estimates the parameters under which the forward model best supports the decisions.} 
    \label{fig:io_pipeline}
\end{figure}
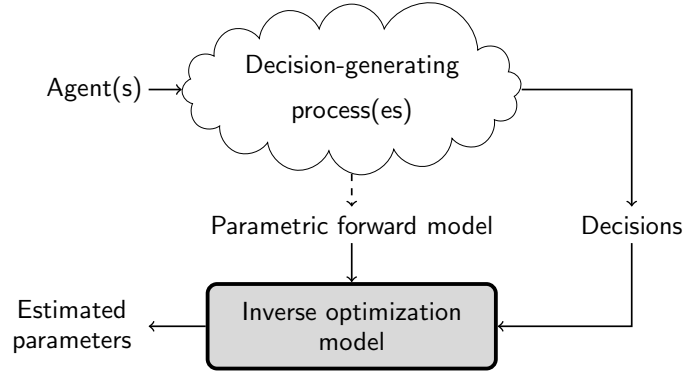

\subsection{The forward problem}
\label{subsec:problem_def_forward}

Mathematically, let the forward optimization model be 
\begin{align*}
    \FOP(\btheta) := \min_{\bx} \;\left\{ f(\bx, \btheta) \;|\; \bx \in \Xfeas(\btheta) \right\}. 
\end{align*}
This model is characterized by a parameter $\btheta$ from a parameter class $\bTheta$ that controls the objective $f(\bx, \btheta)$, feasible set $\Xfeas(\btheta)$, or both. The objective $f(\bx, \btheta)$ is typically assumed to be a convex and differentiable parametric function in both $\bx$ and $\btheta$. 
Unless otherwise stated, we assume that $\bTheta$ is a polyhedron.
For problems where $\btheta$ parameterizes only the objective function, i.e., $\mX(\btheta) = \set{X}$, common choices for modeling the objective include:
\begin{itemize}
    \item \textbf{Linear:} $f(\bx, \btheta) = \btheta^\tpose \bx$, where $\bTheta \subseteq \field{R}^n$. 
    
    \item \textbf{Quadratic:} $f(\bx, \btheta := (\bPhi, \bpsi)) = \bx^\tpose \bPhi \bx + \bpsi^\tpose \bx$, where $\bTheta \subseteq \{(\bPhi, \bpsi) \;|\; \bPhi \in \field{S}^{n\times n}_{+} ,\; \bpsi \in \field{R}^n \}$. 
    
    \item \textbf{Convex-separable bases:} $f(\bx, \btheta) = \sum_{b=1}^B \theta_b f^{(b)}\left(\bx\right)$, where $f^{(1)}(\bx), f^{(2)}(\bx), \dots, f^{(B)}(\bx)$ are $B$ convex basis functions and $\bTheta \subseteq \field{R}^{B}$.
\end{itemize}

The feasible set is assumed to have the form $\Xfeas(\btheta) := \{ \bx \in \field{R}^q \times \field{Z}^{n-q} \;|\; \bg(\bx, \btheta) \leq \bzero \}$, where $q \in \{0, 1, \ldots, n\}$ and $\bg(\bx, \btheta) = (g_1(\bx, \btheta), \cdots, g_m(\bx, \btheta))$ is composed of $m$ convex functions. 
Finally, let 
\begin{align*}
    \Xopt(\btheta) := \argmin_{\bx} \;\left\{ f(\bx, \btheta) \;|\; \bx \in \Xfeas(\btheta) \right\},
\end{align*}
denote the \emph{optimal solution set}, the set of points in $\mX(\btheta)$ that are optimal under $\btheta$.

\subsection{The inverse problem} \label{subsec:problem_def_inverse}

Consider a data set $\{\bhx_i\}_{i=1}^N$ of $N \geq 1$ decisions. These decisions may arrive from different versions of the forward model $\FOP_i(\btheta) := \min \{ f_i(\bx, \btheta) \;|\; \bx \in \Xfeas_i(\btheta) \}$, for example via an agent who solves the same decision-generating process several times under different conditions. 
We also index the corresponding optimal sets $\Xopt_i(\btheta)$. 
Given this data set, inverse optimization estimates a parameter vector $\btheta^*$ such that the aggregate ``fit'' of the  corresponding forward models $\FOP_i(\btheta^*)$ to $\bhx_i$ is optimized. An estimate $\btheta^*$ is considered a ``perfect fit'' for $\bhx_i$ if $\bhx_i \in \Xopt_i(\btheta^*)$. We formally define the set of perfect fit estimates as the \emph{inverse-feasible set}
\begin{align*}
    \Thetainv_i(\bhx_i) := \left\{ \btheta \;\Big|\; \bhx_i \in \Xopt_i(\btheta) \right\}.
\end{align*}
%

A fundamental question in the design of an inverse optimization problem is whether inverse-feasibility must be strictly enforced. This leads to what we call the \emph{classical} versus \emph{data-driven} formulations (see Figure \ref{fig:summary_of_io_methods}).

\begin{figure}[t]
    \centering
        \scalebox{0.9}{%
        \begin{tikzpicture}[font=\sffamily]
        
        \node(FO) [process, text width=4.1cm] {Forward model and $\bTheta$};
        
        \node(Ass) [process, below of = FO, yshift=-0.5cm, text width=6.8cm] {Must inverse-feasibility be satisfied?}; 
        

        \node(CIO) [process, below of = Ass, xshift=-4.5cm, yshift=-1.1cm, text width=7cm] {Classical Inverse Optimization \\ (Section~\ref{sec:classic_IO})};
        \node(DDIO) [process, below of = Ass, xshift=4.5cm, yshift=-1.1cm, text width=7cm] {Data-driven Inverse Optimization \\ (Section~\ref{sec:data-driven_IO})};

        \node(prior) [noBox, below of = CIO, yshift=-0.35cm, text width=5.3cm, minimum height=0.5cm] {Add constraints $\bhx_i \in \Xopt_i(\btheta)$};
        \node(formcio) [noBox, below of = prior, yshift=-0.35cm, text width=7.1cm, minimum height=0.5cm] {Solve inverse problem by reformulating $\Xopt_i(\btheta)$ using the forward model structure};

        \node(loss) [noBox, below of = DDIO, yshift=-0.35cm, text width=5.3cm, minimum height=0.5cm, align=justify] {Add loss functions $\ell(\bhx_i, \Xopt_i(\btheta))$};
        \node(formddio) [noBox, below of = loss, yshift=-0.35cm, text width=7.1cm, minimum height=0.5cm] {Solve inverse problem by additionally considering the empirical risk of the loss};

        \draw[arrow, double] (FO) to (Ass);
        \draw[arrow, double, out=180, in=90] (Ass) to (CIO) node[above left, yshift=0.7cm] {Yes};
        \draw[arrow, double, out=0, in=90] (Ass) to (DDIO) node[above right, yshift=0.7cm] {No};
        
        \draw[arrow] (CIO) to (prior);
        \draw[arrow] (prior) to (formcio);
        
        \draw[arrow] (DDIO) to (loss);
        \draw[arrow] (loss) to (formddio);

        \node[dashedBorder, fit=(CIO)(prior)(formcio)] {};
        \node[dashedBorder, fit=(DDIO)(loss)(formddio)] {};
        
        \end{tikzpicture}
        }
    \caption{Classical and data-driven inverse optimization methods are differentiated by whether inverse feasibility must be satisfied. From there, classical and data-driven techniques follow their respective steps.} 
    \label{fig:summary_of_io_methods}
\end{figure}
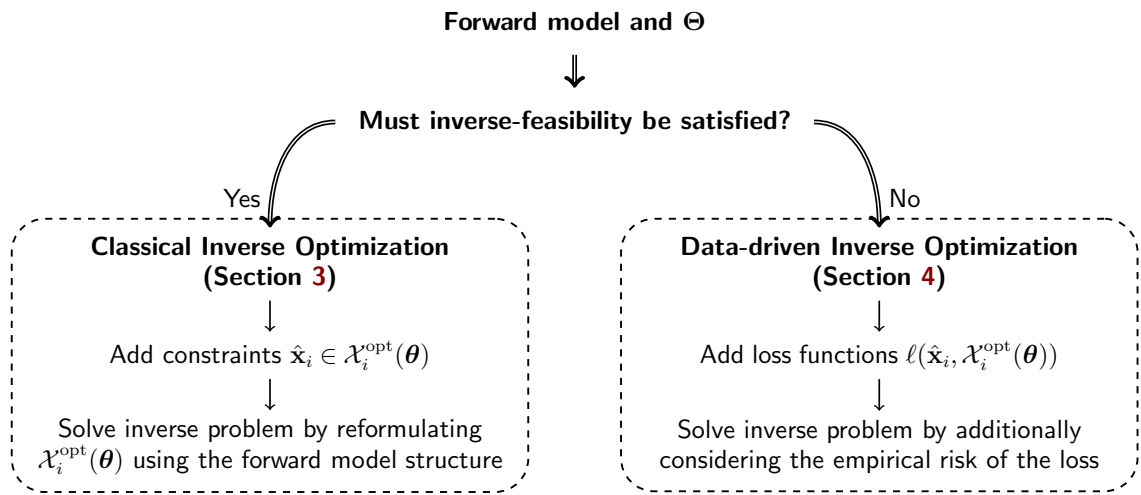

\paragraph{Classical inverse optimization (Section~\ref{sec:classic_IO}).} 
These inverse optimization problems solve for a parameter $\btheta \in \bTheta$ that is inverse-feasible and minimizes an application-specific objective $h(\btheta)$:
\begin{align}\label{eq:classical_io_highlevel}
    \min_{\btheta} \;\left\{  h(\btheta) \;\Big|\; \btheta \in \Thetainv_i(\bhx_i) \; \forall i \in \{1, \ldots, N\},\;  \btheta \in \bTheta \right\}.
\end{align}
The inverse-feasibility constraints make~\eqref{eq:classical_io_highlevel} a bilevel program. Many techniques have been developed to reformulate it into a single-level form.

\paragraph{Data-driven inverse optimization (Section~\ref{sec:data-driven_IO}).} 
Instead of enforcing inverse-feasibility, loss functions $\ell(\bx_i, \Xopt_i(\btheta))$ are used to penalize the extent to which inverse-feasibility is violated: 
%
\begin{align}\label{eq:data-driven_io_highlevel}
    \min_{\btheta} \;\bigg\{ \kappa h(\btheta) + \frac{1}{N} \sum_{i=1}^N \ell\left(\bhx_i, \Xopt_i(\btheta) \right) \; \Big| \;  \btheta \in \bTheta \bigg\}.
\end{align}
Here, $\kappa \geq 0$ describes a trade-off between an application-specific objective $h(\btheta)$ and the loss. 

\paragraph{Alternative learning paradigms (Section~\ref{sec:related_model_paradigms}).} 
The above problems become computationally challenging with large forward models or data sets, necessitating other approaches that permit fast algorithms. We discuss Inverse Online Learning (IOL), which considers input data sequentially, and Inverse Reinforcement Learning (IRL), which addresses large sequential decision-making models.

\subsection{Choosing an inverse model}

The choice of which inverse optimization model to use depends on the application. Most applications in the literature can be described as either \emph{design} or \emph{estimation} problems. 

\paragraph{Design applications (Section \ref{sec:classic_IO_apps}).} 
Inverse optimization can be used to design prices, incentives, or mechanisms $\btheta \in \bTheta$ to induce particular agent decisions $\{\bhx_i\}_{i = 1}^N$ by making them optimal under the computed parameters. These problems assume that the agents' decision-generating processes are known and focus on classical inverse optimization techniques. 

\paragraph{Estimation applications (Section \ref{sec:modern_applications}).} These problems learn a decision-making model using decision inputs representing observed agent behavior. 
Here, data-driven inverse optimization models can generate parameter estimates $\btheta \in \bTheta$ even if it is infeasible to render all observed decisions $\{\bhx_i\}_{i = 1}^N$ optimal. This property is important in applications with large decision data sets obtained from noisy measurements, inconsistent behavior, or unobserved contextual factors. 

\section{Classical Inverse Optimization}\label{sec:classic_IO}

In this section, we overview reformulation techniques for the inverse-feasibility constraint in 
Problem \eqref{eq:classical_io_highlevel}. 
We present different techniques for linear, conic, discrete, and sequential forward optimization models that each reformulate the corresponding inverse model into a tractable optimization problem.
We also present variations where the input is only given in a partial form.

Although the reformulation techniques presented in this section can be used to solve any instance of Problem~\eqref{eq:classical_io_highlevel}, we consider the most common objective function found in the literature, i.e., $h(\btheta) = \| \btheta - \hat\btheta \|_p$, where $\bhtheta$ is a fixed value, interpreted as an a priori belief or estimate:
%
\begin{align*}
    \IOPp(\bhx, \bhtheta) := 
    \min_{\btheta} \;\left\{ \norm{\btheta - \bhtheta}_p \;\bigg|\; \btheta \in \Thetainv(\bhx), \; \btheta \in \bTheta \right\}.
\end{align*}
For ease of exposition, we also assume a single decision input ($N=1$). 

The earliest paradigms in classical inverse optimization used the following characterization of the inverse-feasible set (see~\citet{heuberger2004inverse} for a review):
\begin{align}\label{eq:xinv_fundamental}
    \Thetainv(\bhx) = \left\{ \btheta \;|\; f(\bhx, \btheta) \leq f(\bx, \btheta) , \; \forall \bx \in \Xfeas(\btheta) \right\}.
\end{align}
For $\bhx$ to be optimal, it must have an objective value that is at least as good as any other feasible solution. However,~\eqref{eq:xinv_fundamental} requires enumerating every feasible solution. Consequently, early literature focuses on integer and network flow forward models where this enumeration can be finite. For example,~\citet{burton1992instance, burton1994use} use~\eqref{eq:xinv_fundamental} to estimate arc costs in shortest path forward models,~\citet{zhang1996inverse} explore spanning tree models, and~\citet{guler2010capacity} estimate the capacity constraints of minimum cost flow models. 
Since enumerating all feasible solutions is impractical in general, alternative and tractable characterizations of $\Thetainv(\bhx)$ were developed. 

\subsection{Linear models}\label{subsec:linear}

Linear models are among the most well-studied in inverse optimization, with efficient techniques to estimate both objectives and constraints. Consider the Linear Forward Model
\begin{align*}
    \FOPL((\btheta, \bPhi, \bpsi)) := \min \{ \btheta^\tpose \bx \;|\; \bPhi \bx \geq \bpsi \}
\end{align*}
where $\btheta \in \field{R}^n$, $\bPhi \in \field{R}^{m \times n }$, and $\bpsi \in \field{R}^m$ are the objective vector, constraint matrix, and right-hand-side constraint vector, respectively. The inverse optimization literature for linear forward models generally considers the estimation of either the objective or the constraint parameters independently, i.e., under the assumption that the other components are given:
\begin{itemize}
    \item \textbf{Estimating the objective:} Let $\bPhi = \bA$ and $\bpsi = \bb$ where $\bA \in \field{R}^{m \times n}$ and $\bb \in \field{R}^m$ are known constraint parameters, and estimate an objective vector $\btheta$ of a minimum distance from $\bhtheta$.
    
    \item \textbf{Estimating the constraints:} Let $\btheta = \bc$ where $\bc \in \field{R}^n$ is a known objective vector and estimate $(\bPhi, \bpsi)$ of minimum distance from $(\hat\bPhi, \hat\bpsi)$. Note that this generalizes special cases of estimating only the constraint matrix (i.e., $\bTheta := \{ (\bPhi, \bpsi) \;|\; \bpsi = \bb \}$) or estimating the only the right-hand-side vector (i.e, $\bTheta := \{ (\bPhi, \bpsi) \;|\; \bPhi = \bA \}$).

\end{itemize}
%
When estimating only objective parameters, we assume $\bhx$ is feasible for the given constraint parameters, i.e., that $\bA \bhx \geq \bb$. When it is obvious from context, we simplify the notation of $\FOPL((\btheta, \bPhi, \bpsi))$ to omit known parameters (e.g., let $\FOPL(\btheta) := \min \{ \btheta^\tpose \bx \;|\; \bA \bx \geq \bb \}$).

\subsubsection{Estimating the objective.}

\citet{ahuja2001inverse} propose one of the earliest inverse optimization frameworks for estimating the objective of general bounded linear forward models $\FOPL(\bhtheta)$. They show that the inverse optimization problem of a linear program can itself be a linear program by characterizing $\Thetainv(\bhx)$ using the complementary slackness conditions in linear programming. Equivalently, strong duality conditions can also be used \citep{chan2018inverse, shahmoradi2021quantile, ghobadi2021inferring}. 
\begin{property}[Complementary Slackness]\label{prop:comp_slack}
    If $\bhx$ is an optimal solution for $\FOPL(\btheta)$, then there exists a dual vector $\blambda$ that satisfies (i) dual feasibility $\bA^\tpose \blambda = \btheta, \; \blambda \geq \bzero$ and (ii) complementary slackness $(\bA \bhx - \bb)^\tpose \blambda = 0$. 
\end{property}
From Property \ref{prop:comp_slack}, the inverse-feasible set of $\FOPL(\btheta)$ has a convenient linear representation $\Thetainv(\bhx) = \{ \btheta \;|\; \exists \blambda \geq \bzero : \bA^\tpose \blambda = \btheta \;,\;  (\bA \bhx - \bb)^\tpose \blambda = 0 \}$. The inverse optimization problem then becomes
\begin{align}
\label{eq:ahuja_ilo}
\begin{split}
    \min_{\btheta, \blambda} \quad & \norm{\btheta - \bhtheta}_p \\
    \st \quad   & \bA^\tpose \blambda = \btheta \\
                & \left(\bA \bhx - \bb \right)^\tpose \blambda = 0 \\
                & \btheta \in \bTheta, \quad  \blambda \geq \bzero.
\end{split}
\end{align}
If $p=1$ or $p=\infty$, then problem~\eqref{eq:ahuja_ilo} becomes a linear program.

We next highlight the strong duality approach, which leads to an equivalent formulation.
\begin{property}[Strong Duality]\label{prop:strong_duality}
    If $\bhx$ is an optimal solution for $\FOPL(\btheta)$, then there exists a dual vector $\blambda$ that satisfies (i) dual feasibility $\bA^\tpose \blambda = \btheta, \; \blambda \geq \bzero$ and (ii) strong duality $\btheta^\tpose \bhx = \blambda^\tpose \bb$. 
\end{property}
Property \ref{prop:strong_duality} implies that we can replace the constraint $\left(\bA \bhx - \bb \right)^\tpose \blambda = 0$ in Problem \eqref{eq:ahuja_ilo} with $\btheta^\tpose \bhx = \blambda^\tpose \bb$ via the following transformation
%
\begin{align*}
    \left(\bA \bhx - \bb \right)^\tpose \blambda = \blambda^\tpose \bA \bhx - \bb^\tpose \blambda = \btheta^\tpose \bhx - \bb^\tpose \blambda.
\end{align*}
This observation underscores that there are often multiple equivalent formulations of an inverse problem for the same forward model, based on the choice of optimality condition that is used to develop the inverse problem. For many classical inverse problems, where inverse-feasibility is a constraint, these alternatives may often be considered equivalent. On the other hand, the specific choice of optimality condition can motivate different loss functions and lead to different model formulations in data-driven inverse optimization, as will be discussed in Section \ref{sec:data-driven_IO}.


As an extension, it has been shown that the inverse problem of an infinite-dimensional linear forward model is an infinite-dimensional convex program. \citet{ghate2015inverse} study linear forward problems with a countably infinite number of variables and constraints, 
where under a sufficiency condition for strong duality, they construct $\Thetainv(\bhx)$ as an infinite number of dual feasibility constraints and propose a convergent solution algorithm for the resulting infinite-dimensional inverse problem.
\citet{nourollahi2019inverse} extend these results to minimum cost flow problems on infinite dimensional networks and \citet{ghate2020inverse} considers semi-infinite linear forward models that feature a finite dimensional decision vector but an uncountably infinite constraint set.

\subsubsection{Estimating constraint parameters.}
\label{sec:classical_io_linear_estimating_constraints}

\citet{chan2020inverse} consider $\FOPL(\bPhi)$ where the right-hand-side constraint vector $\bpsi = \bb$ is known and only $\bPhi$ must be estimated. Given an observed $\bhx$, the inverse-feasible set is 
\begin{align*}
    \Thetainv(\bhx) = \{ \bPhi \;|\; \exists \blambda \geq \bzero : \bPhi^\tpose \blambda = \bc , \; \bc^\tpose \bhx = \blambda^\tpose \bb \}.
\end{align*}
Although $\Thetainv(\bhx)$ contains bilinear constraints $\bPhi^\tpose \blambda = \bc$,~\citet{chan2020inverse} propose a solution method by observing that for $\bhx$ to be optimal, it must satisfy at least one constraint with equality. Specifically, an optimal solution of the inverse problem can be obtained by perturbing the nearest facet $\{ \bx \;|\; \hat{\bphi}_j^\tpose \bx \geq b_j \}$ from $\bhx$ until the corresponding constraint is satisfied. 

\citet{ghobadi2021inferring} consider the general problem of estimating both $\bPhi$ and $\bpsi$. 
Here, the inverse-feasible set $\Thetainv(\bhx) = \{ (\bPhi, \bpsi) \;|\; \exists \blambda \geq \bzero : \bPhi^\tpose \blambda = \bc , \; \bc^\tpose \bhx = \blambda^\tpose \bpsi \}$
%
%
is now bilinear in two constraints. 
They derive a simple convex formulation of the inverse optimization problem by noting that the optimality of $\bhx$ means that $\bc^\tpose \bx \geq \bc^\tpose \bhx$ is an implicit constraint. 
Since for a linear optimization problem, all optimal solutions must lie on at least one of the facets of the feasible set, they select the implicit constraint as the supporting facet and reduce the inverse problem to estimating parameters $(\bPhi, \bpsi)$ such that the observed decision $\bhx$ is feasible:
\begin{align*}
    \min_{\bPhi, \bpsi} \quad   & \norm{ \left(\bPhi, \bpsi) - (\hat\bPhi, \hat\bpsi\right) }_p \\
    \st \quad                   & \bPhi^\tpose \bhx \geq \bpsi \\
                                & \left(\bPhi, \bpsi\right) \in \bTheta.
\end{align*}
%

The above approaches are also useful in robust optimization. For instance,~\citet{chan2020inverse} estimate parameters in an uncertainty set for robust linear optimization models of the form
\begin{align}\label{eq:fop_ro}
    \min_{\bx} \max_{\bA \in \set{U}(\btheta)} \left\{ \bc^\tpose \bx \;\Big|\; \bA \bx \geq \bb \right\},
\end{align}
where the uncertainty set depends on the parameter to be estimated.
Since robust linear optimization problems with polyhedral uncertainty sets 
can be reformulated into single-level linear optimization problems of the structure in $\FOPL(\bPhi)$ with the uncertainty set parameters in the constraints of the linear program, the above approaches apply. 

\subsubsection{Jointly estimating objective and constraint parameters.} In some special cases, it is possible to
jointly estimate parameters from both the objective and constraints. Consider the forward model $\FOPL((\btheta, \bpsi))$ where the constraint matrix $\bPhi = \bA$ is given, but the objective vector and right-hand-side vector need to be estimated. To solve this problem, we can leverage the complementary slackness formulation~\eqref{eq:ahuja_ilo}. Note here that if $\bb$ in problem~\eqref{eq:ahuja_ilo} is a decision variable (now denoted as $\bpsi$), the complementary slackness equation becomes bilinear, which can be reformulated into linear constraints using an additional set of auxiliary binary variables $\bz$. This implies that the inverse optimization problem can be solved as the mixed integer linear program 
\begin{align}\label{model:LP_joint_estimation}
\begin{split}
\min_{\btheta, \blambda, \bpsi, \bz} \quad      & \norm{ \left(\btheta, \bpsi\right) - \left(\hat\btheta, \hat\bpsi\right)  }_p \\
\st \quad\;  & \bA \bhx - \bpsi \geq \bzero \\
& \bA\bhx - \bpsi \leq M\bz  \\
& \blambda \leq M(\mathbf{1}-\bz) \\
& \bA^\top \blambda = \btheta  \\
& (\btheta, \bpsi) \in \bTheta, \quad \blambda \geq \bzero, \quad \bz \in \{0,1\}^m
\end{split}
\end{align}
where $M > 0$ is a sufficiently large constant.

\subsection{Conic and convex models} 
\label{sec:classical_IO_convex}

For $m \geq 2$, let $\set{C} \subset \field{R}^m$ be a proper cone, meaning it is convex and closed, it has a non-empty interior, and $\{\bx, -\bx\} \in \set{C} \Rightarrow \bx = \bzero$. 
Consider the Conic Forward Model
\begin{align}\label{eq:fop_conic}
    \FOPC(\btheta) := \min_{\bx} \; \left\{ f(\bx, \btheta) \;|\; -\bg(\bx, \btheta) \in \set{C} \right\}
\end{align}
where $f(\bx, \btheta)$ is a convex function in $\bx$ and $\bg(\bx, \btheta) = \big( g_1(\bx, \btheta), \dots, g_m(\bx, \btheta) \big)$ is a vector-valued function whose elements are each differentiable and convex. 
Note that setting $\set{C} = \field{R}^m_+$ reduces the forward model to a classical convex optimization problem, where the constraint $-\bg(\bx, \btheta) \in \mC$ in model \eqref{eq:fop_conic} can be equivalently written as $\bg(\bx, \btheta) \leq \mathbf{0}$. \citet{iyengar2005inverse} derive a tractable reformulation of $\IOPp(\bhx, \bhtheta)$ using the Karush-Kuhn-Tucker (KKT) conditions.
\begin{property}[KKT Conditions]
    A decision $\bhx$ is optimal for $\FOPC(\btheta)$ if (i) there exists $\bx \in \set{C}$ for which $-\bg(\bx, \btheta) \in \interior(\set{C})$, and (ii) there exists $\blambda \in \set{C}$ for which
    \begin{align*}
        \nabla_{\bx} f(\bhx, \btheta) + \sum_{j=1}^m \lambda_j \nabla_{\bx} g_j(\bhx, \btheta) = 0 & \textrm{ and } 
         \lambda_j g_j(\bhx, \btheta) = 0, \quad \forall j \in \{1, \dots, m\}.
    \end{align*}
\end{property}

Let the inverse-feasible set $\Thetainv(\bhx)$ be the set of $\btheta$ that satisfy the KKT conditions above.
For a general $\FOPC(\btheta)$, this set features bilinear equations in $\lambda_j$ and $g_j(\bhx, \btheta)$, meaning that the general inverse problem is not easily solvable. However, in the case where $\btheta$ lies only in the objective, the corresponding inverse problem is a conic program.
\begin{lemma}[\citet{iyengar2005inverse}]\label{lem:iyengar_inverse_conic}
    Consider $\FOPC(\btheta)$. Suppose that $\bg(\bx, \btheta) = \bg(\bx)$ and that there exists $\bx$ for which $-\bg(\bx) \in \interior(\set{C})$. Then $\IOPp(\bhx, \bhtheta)$ is equivalent to
    \begin{align*}
        \min_{\btheta, \blambda} \quad  & \norm{\btheta - \hat\btheta}_p \\
        \st \quad                       & \nabla_{\bx} f(\bhx, \btheta) + \sum_{j=1}^m \lambda_j \nabla_{\bx} g_j(\bhx) = \bzero \\
                                        & \lambda_j g_j(\bhx) = 0, \quad \forall j \in \{1, \dots, m\} \\
                                        & \btheta \in \bTheta, \quad \blambda \in \set{C}.
    \end{align*}
    Furthermore if $\nabla_{\bx} f(\bhx, \btheta)$ is an affine function of $\btheta$, then the above problem is convex.
\end{lemma}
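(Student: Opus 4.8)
The plan is to show that the feasible region of the displayed program, after projecting out the multiplier $\blambda$, coincides exactly with $\Thetainv(\bhx) \cap \bTheta$; since both problems then minimize the same objective $\norm{\btheta - \bhtheta}_p$ over the same set of admissible $\btheta$, they are equivalent. Throughout I would carry the standing assumption that $\bhx$ is feasible, i.e.\ $-\bg(\bhx) \in \set{C}$ (otherwise $\Thetainv(\bhx) = \emptyset$ and $\IOPp(\bhx,\bhtheta)$ is vacuous), mirroring the feasibility assumption made in the linear case. The crucial structural observation is that, since $\bg$ does not depend on $\btheta$, the feasible set $\Xfeas(\btheta) = \{\bx \mid -\bg(\bx) \in \set{C}\}$ is one and the same convex set for every $\btheta$, so the hypothesized Slater point — some $\bx$ with $-\bg(\bx) \in \interior(\set{C})$ — is a strictly feasible point for $\FOPC(\btheta)$ \emph{simultaneously} for all $\btheta \in \bTheta$. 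Hence condition (i) of the KKT Property holds uniformly in $\btheta$.

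First I would prove the ``feasible $\Rightarrow$ inverse-feasible'' inclusion: if $(\btheta,\blambda)$ satisfies the constraints of the displayed program, then $\btheta \in \bTheta$, $\blambda \in \set{C}$, and the stationarity and complementary-slackness equations hold, which is exactly condition (ii) of the KKT Property; combined with condition (i) (the Slater point), the KKT Property gives that $\bhx$ is optimal for $\FOPC(\btheta)$, i.e.\ $\btheta \in \Thetainv(\bhx)$. For the reverse inclusion I would invoke the \emph{necessity} of the KKT conditions: because $f(\cdot,\btheta)$ is convex, each $g_j$ is convex and differentiable, and Slater's condition holds, strong duality holds for $\FOPC(\btheta)$ and every optimal primal solution admits a dual certificate $\blambda \in \set{C}$ satisfying stationarity and complementary slackness. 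Thus if $\btheta \in \Thetainv(\bhx) \cap \bTheta$, then $\bhx$ is optimal for $\FOPC(\btheta)$, such a $\blambda$ exists, and $(\btheta,\blambda)$ is feasible for the displayed program. Combining the two inclusions, the $\btheta$-projection of the program's feasible set equals $\Thetainv(\bhx) \cap \bTheta$; since the objective does not involve $\blambda$, the optimal values coincide and (discarding $\blambda$) optimal solutions correspond, which is the asserted equivalence.

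For the convexity claim I would simply inspect the data of the program under the extra hypothesis that $\nabla_{\bx} f(\bhx,\btheta)$ is affine in $\btheta$. Once $\bhx$ is fixed, the vectors $\nabla_{\bx} g_j(\bhx)$ and the scalars $g_j(\bhx)$ are constants, so the stationarity equation is affine jointly in $(\btheta,\blambda)$ and the complementary-slackness equations $g_j(\bhx)\,\lambda_j = 0$ are linear in $\blambda$; the constraint $\blambda \in \set{C}$ is convex since $\set{C}$ is a proper cone, $\btheta \in \bTheta$ is convex by the standing assumption on $\bTheta$, and $\norm{\,\cdot - \bhtheta}_p$ is convex. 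Hence the program minimizes a convex function over a convex (indeed conic-representable) set.

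The main obstacle is the reverse inclusion, i.e.\ establishing that the KKT conditions are \emph{necessary} for optimality rather than merely sufficient — this is precisely where Slater's condition does essential work, and it is worth pinning down which constraint-qualification / strong-duality statement is being invoked. A secondary subtlety, if one wants to go beyond $\set{C} = \field{R}^m_+$ to a general proper cone, is checking that the componentwise complementarity form $\lambda_j g_j(\bhx) = 0$ (rather than $\langle \blambda, -\bg(\bhx)\rangle = 0$ with $\blambda$ in the dual cone) is the correct condition; \citet{iyengar2005inverse} work with self-dual cones, for which the stated KKT Property already encodes this, so I would not belabour it. Finally, the feasibility caveat on $\bhx$ noted above should be stated explicitly so that the two problems are genuinely equivalent and not merely equal on a common sub-domain.
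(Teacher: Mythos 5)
Your proposal is correct and follows essentially the route the paper intends: the surrounding text defines $\Thetainv(\bhx)$ via the KKT conditions and the lemma is obtained by substituting them into $\IOPp(\bhx,\bhtheta)$, with convexity following because the bilinear terms $\lambda_j g_j(\bhx,\btheta)$ collapse to linear ones once $\bg$ is independent of $\btheta$. Your added care on the \emph{necessity} direction is a genuine improvement over the paper's presentation, since the stated KKT Property is only a sufficient condition and Slater's condition is exactly what closes that gap; the caveats on feasibility of $\bhx$ and on componentwise versus inner-product complementarity for general cones are also correctly identified and appropriately deferred.
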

%

Specific characterizations of $\FOPC(\btheta)$ can incur very efficient algorithms.
For example,~\citet{zhang2010augmented} consider quadratic forward optimization models where $f(\bx, \btheta) = \bx^\tpose \bPhi \bx + \bpsi^\tpose \bx$ for $\bTheta =  \{ (\bPhi, \bpsi)  \;|\; \bPhi \in \field{S}^{n \times n}, \; \bpsi \in \set{R}^n \}$. 
Using Lemma~\ref{lem:iyengar_inverse_conic}, the inverse conic problem is a semi-definite program (SDP). 
Here,~\citet{zhang2010augmented} propose a more efficient algorithm by first writing the dual of this program as a semi-smooth, differentiable convex program that can be solved by a Newton method. Further,~\citet{zhang2010inverse} consider convex forward models with separable basis objective functions $f(\bx,\btheta) = \sum_{b=1}^B \theta_b f^{(b)}(\bx)$ and known linear equality constraints. In this case, the inverse problem is a linear program.

\subsection{Markov decision processes}
\label{subsec:MDP}

\citet{erkin2010eliciting} use inverse optimization to estimate the reward function in a finite-state finite-action Markov decision process (MDP). 
Consider an MDP defined by the tuple $(\set{S}, \set{A}, p, \btheta, \gamma)$, where $s \in \set{S}$ and $a \in \set{A}$ are states and actions in their respective sets, $p(s'|a, s) \in [0, 1]$ is the state transition probability for any state-action pair, $\btheta : \set{S} \times \set{A} \to \field{R}$ is a reward function, and $\gamma \in [0, 1]$ is a discount factor. In a finite-state finite-action MDP, we can characterize the reward function as a matrix $\btheta \in \field{R}^{|\set{S}| \times |\set{A}|}$ for which $\theta_{s, a}$ denotes the reward for a state-action pair. A (forward) MDP determines an optimal value function $\bv^*$ using the Bellman equations
\begin{align}\label{eq:bellman}
    v_s = \max_{a \in \set{A}} \left\{ \theta_{s, a} + \gamma \sum_{s' \in \set{S}} p(s'|a, s) v_{s'}  \right\}, \quad \forall s \in \set{S}.
\end{align}
To formulate the inverse problem, we first write the MDP as a linear program:
\begin{align*}
    \FOPMDP(\btheta) := \min_{\bv} \left\{ \sum_{s \in \set{S}} v_s \;\Bigg|\; v_s \geq \theta_{s, a} + \gamma \sum_{s' \in \set{S}} p(s'|a, s) v_{s'} ,\; \forall s \in \set{S}, \; a \in \set{A}   \right\}. 
\end{align*}

In an inverse MDP, rather than observing $\hat{\bv}$, we observe a policy $\hat\bpi : \set{S} \rightarrow \set{A}$ where $\hat\bpi(s) \in \argmax_{a \in \set{A}} \{ \theta_{s, a} + \gamma \sum_{s' \in \set{S}} p(s'|a, s) v_{s'}  \}$.
Since an optimal policy can be derived by solving the dual of the above linear program~\citep{puterman1990markov}, it also satisfies complementary slackness.
\begin{property}[Complementary Slackness for MDPs]\label{prop:cs_mdp}
    The policy $\bhpi$ is optimal for $\FOPMDP(\btheta)$ if and only if 
    \begin{align*}
        \bhpi(s) = a' \; \Longrightarrow \; v_s = \theta_{s, a'} + \gamma \sum_{s' \in \set{S}} p(s'|a', s) v_{s'}, \quad \forall s \in \set{S}.
    \end{align*}
\end{property}

The inverse-feasible set of $\FOPMDP(\btheta)$ is
\begin{align*}
    \Thetainv(\bhpi) = \left\{ \btheta \;\vast|\; \exists \bv:
    \begin{array}{ll} \displaystyle
        v_s \geq \theta_{s, a} + \sum_{s' \in \set{S}} p(s'|a, s) v_{s'} , \quad \forall s \in \set{S}, \; a \in \set{A} \\ \displaystyle
        v_s = \theta_{s, a'} + \sum_{s' \in \set{S}} p(s'|a', s) v_{s'} , \quad \forall s \in \set{S}, \; a'= \bhpi(s)
    \end{array}
    \right\}
\end{align*}
and $\IOPp(\bhx, \bhtheta)$ is the following convex program 
\begin{subequations} \label{eq:inverse_mdp}
\begin{align}
    \min_{\btheta, \bv} \quad & \norm{\btheta - \hat\btheta}_p \\
    \st \quad   & v_s \geq \theta_{s, a} + \gamma \sum_{s' \in \set{S}} p(s'|a, s) v_{s'},  \quad \forall s \in \set{S}, \; a \in \set{A} \label{eq:inverse_mdp2} \\ 
                & v_s = \theta_{s, a'} + \gamma \sum_{s' \in \set{S}} p(s'|a', s) v_{s'},  \quad \forall s \in \set{S}, \; a'= \bhpi(s)  \label{eq:inverse_mdp3} \\
                & \btheta \in \bTheta.  \label{eq:inverse_mdp4}
\end{align}
\end{subequations}

If the state or action sets are infinite, then $\FOPMDP(\btheta)$ is an infinite-dimensional linear program and we can use infinite-dimensional inverse linear optimization~\citep{ghate2015inverse, ghate2020inverse}. Large scale MDPs are often solved with approximate dynamic programming (ADP) or reinforcement learning. 
Inverse methods for ADP forward models remain an unexplored research area. 
The inverse reinforcement learning literature has developed in parallel with inverse optimization with limited cross-pollination~\citep{abbeel2004apprenticeship}. 

\subsection{Discrete models}\label{subsec:integer}

We next consider forward models that are mixed integer linear programs (MILPs), which do not possess optimality certificates that can be conveniently represented in a small number of equations (e.g., the KKT conditions). 
Consider the Mixed Integer Forward Model 
\begin{align*}
    \FOPMIP(\btheta) := \max_{\bx} \left\{ \btheta^\tpose \bx \;\Big|\; \bA \bx \leq \bb, \; \bx \in \field{R}^{n-q} \times \field{Z}^{q} \right\}.
\end{align*}
%
Note that we use a maximization problem to remain consistent with the original literature. Inverse optimization methods for $\FOPMIP(\btheta)$ either use certificates of strong duality for integer programming, analogous to inverse linear optimization techniques---here, it leads to inverse problems with an exponential number of variables and constraints---or use cutting plane algorithms.  

\subsubsection*{Duality via sub-additive functions.}

\citet{schaefer2009inverse} generalizes the previous strong duality approach to inverse integer linear optimization where $q=n$ in $\FOPMIP(\btheta)$. 
\citet{lamperski2015polyhedral} further explore MILPs where $q < n$. 
We highlight the strictly integer case below.

While the dual vector of a linear program is a non-negative vector, the dual ``variable'' of an integer program $\FOPMIP(\btheta)$ is a non-decreasing and super-additive function $F:\field{Z}^m \rightarrow \field{R}$~\citep{lasserre2009linear}.
An optimal dual function must be dual feasible and achieve the primal optimal value.
\begin{property}[Strong Duality for MIPs]
    Let $\bA_i\in \field{R}^n$ be the $i$-th column of $\bA$.  
    Then, $\bhx$ is optimal for $\FOPMIP(\btheta)$ if there exists a non-decreasing super-additive function $F: \field{R}^m \rightarrow \field{R}$ where 
    %
    \begin{align} \label{eq:duality_mips}
        F(\bzero) = 0, \quad F(\bb) = \btheta^\tpose \bhx, \quad F(\bA_i) \leq \theta_i \qquad \forall i \in \{1, \dots, n\}.
    \end{align}
\end{property}

To characterize the inverse-feasible set, we represent the dual function as a vector whose elements represent the function output for points in $\field{Z}^m$, similar to describing policy functions in MDPs. 
Although $\field{Z}^m$ is infinite,~\citet{schaefer2009inverse} shows that we only need to consider a sufficiently large finite subset $\set{B} \subset \field{Z}^m$ as the domain of the dual function. Given $\set{B}$, the inverse-feasible set is
\begin{align*}
    \Thetainv(\bhx) = \left\{ \btheta \;\vast|\; \exists F \in \field{R}^{|\set{B}|} \geq \bzero \, : \, 
    \begin{array}{l} \displaystyle
        \text{Constraints \eqref{eq:duality_mips}} \\ 
        F(\bbeta) \leq F(\bbeta'), \quad \forall \bbeta \leq \bbeta' \in \set{B} \\
        F(\bbeta) + F(\bbeta') \leq F(\bbeta + \bbeta'),  \quad \forall \bbeta, \bbeta' \in \set{B}
    \end{array}
    \right\}
\end{align*}
where the last two sets of inequalities enforce super-additivity.
\citet{schaefer2009inverse} then develops sufficient conditions on $\set{B}$ such that this $\Thetainv(\bhx)$ fully describes inverse-feasibility. 
\begin{theorem}[\citet{schaefer2009inverse}]\label{thm:inverse_integer_schaefer}
    Suppose that (i) $\bA \geq \bzero$ and $\bb \geq \bzero$ componentwise, and (ii) $\bb \geq \bA_i$ for all $i \in \{1, \dots, n\}$. Let $\bar{b} = \max_i \{ b_i \}$ and define the lattice $\set{B} := \field{Z}^n \times ( \otimes_{i=1}^n [0, \bar{b}] )$. Then, $\IOPp(\bhx, \bhtheta)$ is equivalent to the following (exponentially sized) convex optimization problem:
    \begin{align*}
        \min_{\btheta, F} \quad & \norm{\btheta - \hat\btheta}_p \\
        \st \quad   & F(\bA_i) \geq \theta_i,  \quad \forall i \in \{1, \dots, n \} \\
                    & F(\bb) = \btheta^\tpose \bhx , \quad F(\bzero) = 0 \\
                    & F(\bbeta) \leq F(\bbeta'), \quad \forall \bbeta \leq \bbeta' \in \set{B} \\
                    & F(\bbeta) + F(\bbeta') \leq F(\bbeta + \bbeta') , \quad  \forall \bbeta, \bbeta' \in \set{B} \\
                    & \btheta \in \bTheta, \quad F \geq \bzero
    \end{align*}
\end{theorem}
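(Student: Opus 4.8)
The plan is to prove the stated equivalence by showing that the set of $\btheta \in \bTheta$ that are feasible for the displayed convex program is exactly $\Thetainv(\bhx) \cap \bTheta$; since both problems carry the identical objective $\norm{\btheta - \hat\btheta}_p$, equivalence of the optimization problems follows immediately. So the entire task reduces to establishing that, under hypotheses (i)--(ii) (and with $\bA,\bb$ integral, as is standard for this machinery), a parameter $\btheta$ satisfies $\bhx \in \Xopt(\btheta)$ if and only if there exists $F \in \field{R}^{|\set{B}|}$, $F \geq \bzero$, with $F(\bA_i) \geq \theta_i$ for all $i$, $F(\bb) = \btheta^\tpose\bhx$, $F(\bzero) = 0$, and $F$ non-decreasing and super-additive on the finite lattice $\set{B}$. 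The engine is the super-additive strong duality certificate for integer programs stated above; the only real content is that restricting the dual function from $\field{Z}^m$ down to $\set{B}$ loses nothing, and for this the two hypotheses are precisely what is needed.

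For the ``only if'' direction, I would begin from optimality of $\bhx$ for $\FOPMIP(\btheta)$ and invoke super-additive strong duality to obtain a non-decreasing, super-additive function $F$ on $\field{Z}^m$ --- the canonical witness being the value function $\bd \mapsto \max\{\btheta^\tpose\bx \mid \bA\bx \leq \bd,\ \bx \geq \bzero \text{ integral}\}$ --- with $F(\bzero) = 0$, $F(\bA_i) \geq \theta_i$ for every $i$, and $F(\bb) = \btheta^\tpose\bhx$. Its restriction to $\set{B}$ inherits monotonicity, super-additivity, and (being supported on nonnegative points) nonnegativity, so it is feasible for the convex program provided the points appearing in the equality/inequality constraints lie in $\set{B}$. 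This is the first use of (i)--(ii): $\bzero \in \set{B}$ trivially; $\bb \in \set{B}$ because $0 \leq b_j \leq \bar b$ for every $j$ by (i); and each column $\bA_i \in \set{B}$ because $0 \leq (\bA_i)_j \leq b_j \leq \bar b$ by (i) together with (ii). Hence $\btheta$ is feasible for the convex program.

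For the ``if'' direction, I would take any feasible $F$ for the convex program and any integral $\bx \geq \bzero$ feasible for $\FOPMIP(\btheta)$, so $\bzero \leq \bA\bx \leq \bb$ componentwise. The key observation is that every partial sum $\sum_i k_i \bA_i$ with $0 \leq k_i \leq x_i$ integral satisfies $\bzero \leq \sum_i k_i \bA_i \leq \bA\bx \leq \bb$ and is therefore an integer point of the box, hence in $\set{B}$ (again by (i)--(ii) and the definition of $\bar b$). Writing $\bA\bx$ as the sum of the columns $\bA_i$ taken with multiplicity $x_i$ and iterating the super-additivity inequalities along the chain of partial sums, all of which lie in $\set{B}$, yields $F(\bA\bx) \geq \sum_i x_i F(\bA_i) \geq \sum_i x_i \theta_i = \btheta^\tpose\bx$; monotonicity applied to $\bA\bx \leq \bb$ (both in $\set{B}$) gives $F(\bb) \geq F(\bA\bx)$. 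Combining, $\btheta^\tpose\bx \leq F(\bb) = \btheta^\tpose\bhx$, and since this holds for every feasible $\bx$ while $\bhx$ is itself feasible for $\FOPMIP(\btheta)$, we conclude $\bhx \in \Xopt(\btheta)$, i.e., $\btheta \in \Thetainv(\bhx)$.

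The main obstacle --- and essentially the only nonroutine point --- is the membership claim used in both directions: that hypotheses (i) and (ii) force every nonnegative integer combination of the columns $\bA_i$ lying below $\bb$ to stay inside the box $\otimes_{j}[0,\bar b]$, so that the finite lattice $\set{B}$ already captures all the monotonicity and super-additivity constraints needed to reconstruct (or already constitute) a valid dual certificate. Once this geometric fact is secured, the rest is standard: properties of integer-programming value functions, weak/strong super-additive duality, and the trivial observation that the $p$-norm objective is untouched by the reformulation. The reason Schaefer states explicit sufficient conditions is precisely that without (i)--(ii) one would instead have to enlarge $\set{B}$ or argue about super-additive extensions off $\set{B}$, since a feasible $F$ on a too-small lattice need not extend to a genuine dual function.
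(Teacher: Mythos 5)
Your proposal is correct and follows exactly the route the paper sets up around this theorem: invoke super-additive strong duality for the integer program, and show via hypotheses (i)--(ii) that all points needed for the dual certificate (the columns $\bA_i$, the right-hand side $\bb$, and every partial sum $\sum_i k_i \bA_i \leq \bA\bx \leq \bb$) already lie in the finite lattice $\set{B}$, so the restricted $F$ is both necessary and sufficient. The paper itself states the result without proof, deferring to \citet{schaefer2009inverse}, and your argument (including the correct orientation $F(\bA_i)\geq\theta_i$ for the maximization forward problem) is the standard one.
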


The inverse problem to a mixed integer forward optimization model is a convex program whose non-linearities depend only on $\bTheta$. 
However, this program requires an exponential number of variables $F \in \field{R}^{|\set{B}|}$ and an exponential number of constraints. If the dimension $n$ of the forward problem is large, then this super-additive duality-based approach is likely to be intractable. One approach is to approximate the class of super-additive functions $F(\bbeta)$. For example,~\citet{turner2013examining} use linear and quadratic approximations, often obtaining an optimal solution for small problems. 

\subsubsection*{Cutting plane algorithms.}

An alternative approach is to directly leverage the fundamental definition of $\Thetainv(\bhx)$ in~\eqref{eq:xinv_fundamental}, and write 
$\IOPp(\bhx, \bhtheta)$ as
\begin{align}\label{eq:IOprior_fullMIP}
    \min_{\btheta} \;\left\{ \norm{\btheta - \hat\btheta}_p  \;\Big|\; \btheta^\top \bhx \geq \btheta^\top \bx \quad \forall \bx \in \extreme\left(\{ \bx \in \field{R}^{n-q} \times \field{Z}^q \;|\; \bA \bx \leq \bb  \}\right) ,\;  \btheta \in \bTheta \right\}.
\end{align}
Whereas~\eqref{eq:xinv_fundamental} was defined using the entire feasible set, i.e., $\btheta^\tpose \bhx \geq \btheta^\tpose \bx$ for all $\bx \in \Xfeas$, with a mixed integer linear forward problem, we only need to consider this optimality condition over the set of extreme points of the forward feasible set \citep{wang2009cutting}. Since this set is finite, the feasible set of Problem~\eqref{eq:IOprior_fullMIP} contains a finite but (generally) exponential number of constraints.

\citet{wang2009cutting} proposes a cutting plane algorithm that iteratively generates new extreme points and computes $\btheta$ to make $\bhx$ optimal over the growing set of previously generated points. 
Since the feasible set of $\FOPMIP(\btheta)$ may contain an exponential number of extreme points, each of being computationally demanding to compute, subsequent research focused on accelerating the cutting plane algorithm. \citet{duan2011heuristic} describe a parallel computing approach where different extreme points are simultaneously computed and added. \citet{bodur2022inverse} demonstrate that in many problem classes, generating extreme points or even interior points near $\bhx$ may be sufficient. This observation motivates the development of a trust-region based cutting plane algorithm where smaller, restricted forward problems are solved to generate both better and faster cuts. 

\subsection{Partially constrained inverse problems}\label{subsec:sec3_miscallenous}

Some studies assume that only a lower-dimensional function of a decision vector is given, such as a partial solution or the objective value of a solution. 

\subsubsection*{Inverse optimization with a partial solution.}\label{subsubsec:Partial_IO} 
Several papers have examined the problem of estimating model parameters when only a ``part" of a decision is observed. Examples appear in various network flow problems where we observe flows along only a subset of the arcs. For example, \citet{yang2007partial} consider inverse assignment problems where only a subset of the assignments are identified, while~\citet{cai2008partial} explore  minimum spanning trees where only a subset of disconnected arcs is given. The input to these inverse problems is no longer a point $\bhx \in \mX$, but rather a set $\hat \mX \subseteq \mX$ where a subset of variables denoted by $\mM$ are fixed to known values:
\begin{align*}
    \hat \mX = \left \{\bx  \; | \; \bx \in \mX, \; x_i = \hat{x}_i \; \forall i \in \mM \right \}.
\end{align*}
The Partial Inverse Optimization problem seeks a parameter $\btheta$ such that at least one $\bx \in \hat\mX$ is an optimal solution \citep[e.g.][]{zhang2016algorithms, li2018partial, tayyebi2020partial}:
\begin{align}\label{eq:partial_IO}
 \min_{\btheta, \bx} \;\left\{ \norm{\btheta - \hat\btheta}_p \;\Big|\; \bx \in \hat\mX \;,\; \btheta \in \Thetainv(\bx) \;,\; \btheta \in \bTheta \right\}.
\end{align}

For linear models $\FOPL(\btheta)$, problem~\eqref{eq:partial_IO} can be written as the following bilinear program using the complementary slackness property
\begin{align*}
\min_{\btheta, \blambda, \bx} \quad & \norm{\btheta - \hat\btheta}_p \\
\st \quad  
& (\bA \bx - \bb)^\top \blambda = \bzero \\
& \blambda^\top \bA = \btheta \\
& \bA \bx \geq \bb\\
& \btheta \in \bTheta, \quad \bx \in \hat\mX, \quad \blambda \geq \bzero.
\end{align*}
This model can be reformulated as a mixed integer linear program or solved using decomposition algorithms \citep{hu2012linear}. \citet{wang2013branch} develops a solution method to solve the Partial Inverse Optimization problem for mixed integer linear forward models $\FOPMIP(\btheta)$. The solution method builds upon the cutting plane approach presented in Section \ref{subsec:integer}.

\subsubsection*{Inverse optimal value.}\label{subsubsec:IOV}
\citet{ahmed2005inverse}, \citet{mostafaee2016inverse} and \citet{vcerny2016inverse} 
consider the Inverse Optimal Value problem. Rather than observing a decision $\bhx$, the input is the objective function value $\hat{z}$. This leads to the problem
\begin{align*}
    \min_{\btheta} \left\{ \norm{\btheta - \hat\btheta}_p \;\Big|\; \min_{\bx} \{ \btheta^\tpose \bx \;|\; \bA \bx \geq \bb \} =\hat{z}, \; \btheta \in \bTheta \right\}.
\end{align*}
Under a few mild assumptions on the forward model including a bounded feasible region, \citet{ahmed2005inverse} show that the inverse problem can be reformulated into 
\begin{align*}
    \min_{\btheta, \blambda} \quad & \norm{\btheta - \hat\btheta}_p  \\
    \st \quad   & \blambda^\tpose \bb = \hat{z} \\
                & \bA^\tpose \blambda = \btheta \\
                & \btheta \in \bTheta,  \quad \blambda \geq \bzero.
\end{align*}
The literature referenced in the previous paragraph also considers cases where there does not exist a $\btheta \in \bTheta$ such that $\btheta^\top \bhx = \hat{z}$, and methods are proposed to minimize this gap. 

 \begin{figure}[t]
    \centering
        \resizebox{0.95\linewidth}{!}{%
        \begin{tikzpicture}[font=\sffamily]
        
        \node(LPFO) [process, text width=4.1cm] {Linear Forward Models};
        
        \node(LPObj) [noBox, below of = LPFO, yshift=-1cm, xshift=-1.6cm, text width=3.0cm] {Estimating objectives \\ 
        \footnotesize\citep{ahuja2001inverse}};
        \node(LPCon) [noBox, below of = LPFO, yshift=-1cm, xshift=1.6cm, text width=4.0cm] {Estimating constraints \\ \footnotesize \citep{chan2020inverse, ghobadi2021inferring}};
        
        \node(CPFO) [process, left of = LPFO, xshift=-7cm, text width=4.1cm] {Conic Forward Models};
        \node(CPFOprop) [noBox, below of = CPFO, text width=4cm] {\footnotesize \citep{iyengar2005inverse, zhang2010augmented, zhang2010inverse}};

        \node(IPFO) [process, right of = LPFO, xshift=7cm, text width=3.6cm] {Integer Forward \\ Models};
        
        \node(IPDual) [noBox, below of = IPFO, yshift=-1cm, xshift=-1.8cm, text width=3.5cm] {Duality \\ \footnotesize \citep{schaefer2009inverse, lamperski2015polyhedral} };
        \node(IPCP) [noBox, below of = IPFO, yshift=-1cm, xshift=1.8cm, text width=3.5cm] {Cutting planes \\ \footnotesize  \citep{wang2009cutting, duan2011heuristic, bodur2022inverse} };

        \node(INFLPFO) [process, below of = LPObj, text width=4.6cm, yshift=-4cm] {Infinite-dimension \\ Linear Forward Models};
        \node(INFprop) [noBox, below of = INFLPFO, text width=4.2cm] {\footnotesize \citep{ghate2015inverse, nourollahi2019inverse, ghate2020inverse} };

        \node(NFFO) [process, above of = LPFO, xshift=5cm, yshift=4cm, text width=3.5cm] {Network Flow \\ Forward Models};
        \node(NFFOcite) [noBox, below of = NFFO, yshift=-0.8cm, text width=7.5cm] {\footnotesize \citep{burton1992instance, burton1994use, zhang1996calculating, zhang1996inverse, guler2010capacity} \\
        (Surveyed in~\citep{heuberger2004inverse})};

        \node(MDPFO) [process, below of = CPFO, yshift=-3.25cm, text width=3.8cm] {MDP Forward \\ Models};
        \node(MDPFOcite) [noBox, below of = MDPFO] {\footnotesize \citep{erkin2010eliciting}};

        \node[dashedBorder, fit=(LPFO)(LPObj)(LPCon)] {};
        \node[dashedBorder, fit=(IPFO)(IPDual)(IPCP)] {};
        
        \draw (LPFO) to (LPObj);
        \draw (LPFO) to (LPCon);
        \draw (IPFO) to (IPDual);
        \draw (IPFO) to (IPCP);
        
        \draw[conceptarrow, double] (LPFO) to [out=120, in=60] node[above] {\footnotesize KKT vs duality} (CPFO);
        \draw[arrow, double] (LPObj) to [out=-60, in=240] node[below] {\footnotesize via superadditive functions} (IPDual);
        \draw[arrow, double] (LPObj) to node[left, text width=2cm, text centered, align=center, xshift=0.05cm] {\footnotesize via semi-infinite duality} (INFLPFO);
        \draw[arrow, double] (NFFO) to [out=190, in=90] node[left, align=right, text centered, text width=2.8cm, yshift=0.5cm, xshift=0.75cm] {\footnotesize LP optimality certificates} (LPFO);
        \draw[conceptarrow, double] (NFFO) to [out=-10, in=60] node[right, align=left, text width=2.8cm, text centered] {\footnotesize \eqref{eq:xinv_fundamental} definition of optimality} (IPCP);
        
        \draw[conceptarrow, double] (LPObj) to [out=180, in=90] node[left, text width=2.5cm, yshift=-0.6cm, xshift=2cm, align=center, text centered] {\footnotesize complementary slackness} (MDPFO);
        
        \draw[arrow, double] ([yshift=-0.75cm]MDPFO.south) to [out=270, in=180] node[left, text width=2.5cm, yshift=-0.15cm, align=center, text centered] {\footnotesize infinite states/actions} (INFLPFO);

        
        
        \end{tikzpicture}
        }
    \caption{A roadmap of the main branches of classical inverse optimization. Bold arrows denote extensions and generalizations and dashed arrows denote conceptual similarities.}
    \label{fig:summary_of_section3}
\end{figure}
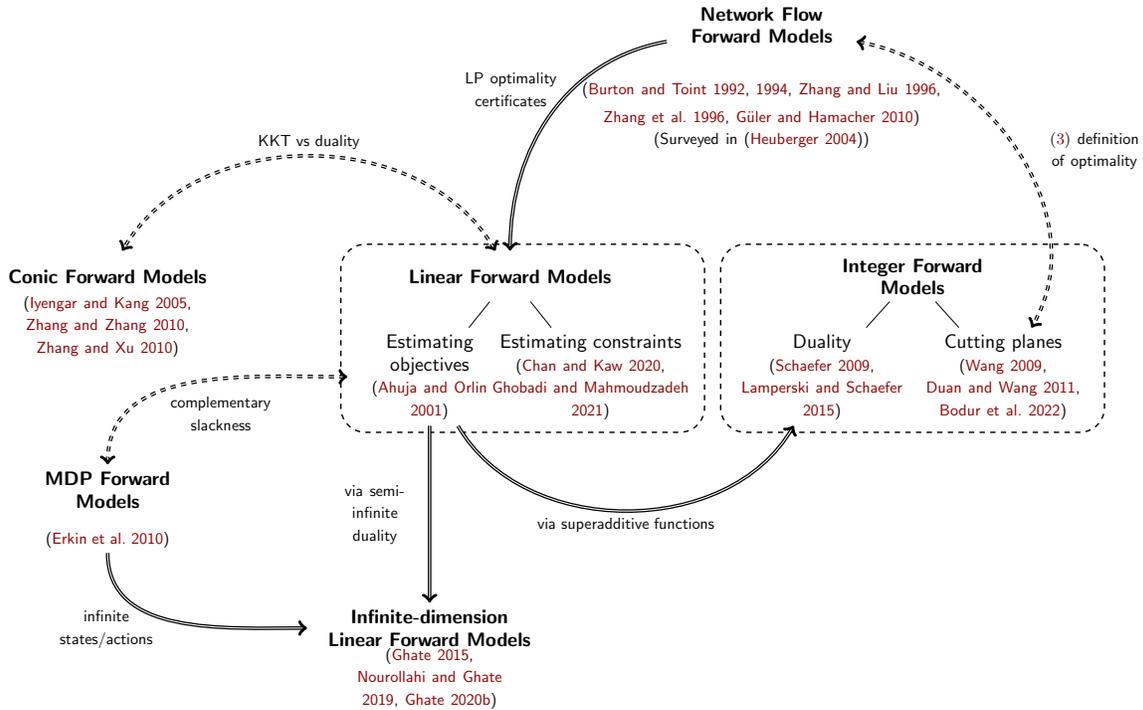

\subsection{Summary}

In this section, we examine classical inverse models for a variety of forward optimization structures. For each forward problem, we show that the inverse models can be formulated and solved by characterizing the optimality conditions using either forward optimality (i.e., via equation \eqref{eq:xinv_fundamental}) or a form of duality (i.e., complementary slackness, strong duality, KKT, or superadditive duality). Figure~\ref{fig:summary_of_section3} summarizes these results and highlights the relationship between the forward problem structure and the optimality conditions that are leveraged to solve the inverse problem.

\section{Data-Driven Inverse Optimization}\label{sec:data-driven_IO}


In this section, we survey different loss functions and solution algorithms in data-driven inverse optimization. Most of this literature explores convex forward models whose instances are associated with observed input parameters $\bu \in \set{U}$. For a given input $\bhu_i$, consider the convex forward model
\begin{align}\label{eq:fop_cvx_u}
    \FOPCVX_i(\btheta) := \min_{\bx}  \left\{ f(\bx, \bhu_i, \btheta) \;|\; \bg(\bx, \bhu_i, \btheta) \leq \bzero \right\}.
\end{align}
We can also define $f_i(\bx, \btheta) \equiv f(\bx, \bu_i, \btheta)$ and $\bg_i(\bx, \btheta) \equiv \bg(\bx, \bu_i, \btheta)$, meaning that introducing $\bu_i$ does not lose any generality. In practice, $\bu_i$ captures observable changes between different instances. 

In the data-driven problem, we observe a data set of decisions and optimal solution sets $\dataset := \{ (\bhx_i, \Xopt_i(\btheta)) \}_{i=1}^N$, drawn i.i.d.~from a probability distribution $(\bx, \Xopt(\btheta)) \sim \field{P}$.
Note that the following statements are equivalent: 
(i) we observe $\dataset := \{ (\bhx_i, \Xopt_i(\btheta)) \}_{i=1}^N$; 
(ii) we observe a data set of decisions and forward models $\dataset := \{ (\bhx_i, \FOPCVX_i(\btheta)) \}_{i=1}^N$;
(iii) we observe a data set of decisions and forward model inputs $\dataset := \{(\bhx_i, \bhu_i) \}_{i=1}^N$.
At times, it is advantageous to specifically assume the data is obtained a specific format, so we interchange these statements as necessary.

Let $\empiricaldist := \sum_{i=1}^N \delta_{(\bhx_i, \Xopt_i(\btheta))}$ be the empirical distribution corresponding to the data set, let $\ell(\cdot, \cdot)$ be a loss function penalizing the violation of inverse-feasibility, and consider the inverse problem
\begin{align*}
    \IOPd(\ell, \empiricaldist) := \min_{\btheta \in \bTheta} \;\; \frac{1}{N} \sum_{i=1}^N \ell\left(\bhx_i, \Xopt_i(\btheta) \right).
\end{align*}
This problem is equivalent to Problem~\eqref{eq:data-driven_io_highlevel} introduced in Section~\ref{subsec:problem_def_inverse}, but without the second objective term $h(\btheta)$. While the literature in data-driven inverse optimization typically considers only the inverse-feasibility loss function, the solution algorithms easily adapt when $h(\btheta)$ is introduced. Finally, we note that most of this literature focuses on estimating objective function parameters.

\subsection{Distance from the optimal solution set}\label{sec:datadriven_distance}

An intuitive measure of the error of a forward model with respect to an observed decision $\bhx$ is the distance of $\bhx$ from $\Xopt(\btheta)$. 
We define the Minimum Distance loss function 
\begin{align*}
    \ell_{\mathrm{D}}\left( \bhx, \Xopt(\btheta) \right) := \min_{\bx \in \Xopt(\btheta)} \;\; \norm{\bx -\bhx}_2,
\end{align*}
which measures the $2$-norm distance from the optimal set. 
The data-driven inverse optimization problem $\IOPd(\ell_{\mathrm{D}}, \empiricaldist)$ using the above loss is referred to as the Inverse Distance problem.

\citet{aswani2018inverse} consider convex forward models $\FOPCVX_i(\btheta)$ parametrized by $\bhu_i$ in~\eqref{eq:fop_cvx_u}. They show that for these models, $\IOPd(\ell_\mathrm{D}, \empiricaldist)$ is a statistically consistent estimator of $\btheta$. 
That is, when the data set consists of i.i.d.\ samples from $\field{P}$, the empirical risk minimization problem $\IOPd(\ell_\mathrm{D}, \empiricaldist)$ converges in probability to the expected risk.

\begin{theorem}[\citet{aswani2018inverse}]\label{thm:distance_is_consistent}
    Consider $\FOPCVX_i(\btheta)$ and suppose that
    \begin{enumerate}
        \item[(i)] The feasible sets of $\FOPCVX_i(\btheta)$ are closed, absolutely bounded, and have non-empty interiors for all $\bu \in \set{U}$ and $\btheta \in \bTheta$.

        \item[(ii)] $f_i(\bx, \bu, \btheta)$ and $\bg_i(\bx, \bu, \btheta)$ are continuous in $\bx, \bu, \btheta$, $f(\bx, \bu, \btheta)$ is strictly convex in $\bx$ for fixed $\bu, \btheta$, and $\bg_i(\bx, \bu, \btheta)$ is convex in $\bx$ for fixed $\bu\in\set{U}$ and $\btheta\in\bTheta$.
        
        \item[(iii)] The set $\bTheta$ is closed, bounded, and convex.
        
        \item[(iv)] The data distribution has finite variance $\field{E}_\field{P} [\bx \bx^\tpose] < \boldsymbol{\infty}_{n\times n}$.
    \end{enumerate}
    %
    If for all $\delta > 0$, there exists a sequence of parameter estimates $\btheta_N$ that satisfy
    \begin{align*}
        \lim_{N \rightarrow \infty} \field{P} \left\{ \inf_{\btheta} \left\{ \norm{ \btheta_N - \btheta }_2 \;\bigg|\; \btheta \in \argmin \frac{1}{N} \sum_{i=1}^N \ell_\mathrm{D}(\bhx, \Xopt(\btheta_N)) \right\} > \delta \right\} = 0
    \end{align*}
    then 
    \begin{align*}
        \frac{1}{N} \sum_{i=1}^N \ell_\mathrm{D}(\bhx_i, \Xopt_i(\btheta_N)) \overset{p}{\to} \min_{\btheta \in \bTheta} \;\; \field{E}_\field{P} \left[ \ell_{\mathrm{D}}(\bx, \Xopt(\btheta)) \right].
    \end{align*}
\end{theorem}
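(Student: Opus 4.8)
The plan is to recognize this as a classical consistency result for M-estimators (extremum estimators) and invoke a uniform law of large numbers. The key object is the empirical risk $R_N(\btheta) := \frac{1}{N}\sum_{i=1}^N \ell_\mathrm{D}(\bhx_i, \Xopt_i(\btheta))$ and its population counterpart $R(\btheta) := \field{E}_\field{P}[\ell_\mathrm{D}(\bx, \Xopt(\btheta))]$. I would first establish that $\ell_\mathrm{D}(\bx, \Xopt(\btheta))$ is well-defined and continuous in $\btheta$ for each fixed $\bx$: strict convexity of $f$ in $\bx$ (assumption (ii)) together with closedness/boundedness/nonempty interior of the feasible set (assumption (i)) guarantees that $\Xopt(\btheta)$ is a singleton, so the ``$\min$'' in $\ell_\mathrm{D}$ is attained at a unique point $\bx^*(\btheta)$. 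Then I would argue that $\bx^*(\btheta)$ varies continuously in $\btheta$ --- this is a maximum-theorem / parametric-optimization argument using continuity of $f$ and $\bg$ in $(\bx,\btheta)$ and the interiority (Slater-type) condition, which keeps the feasible-set correspondence continuous. Continuity of $\bx^*(\cdot)$ immediately gives continuity of $\btheta \mapsto \ell_\mathrm{D}(\bhx_i, \Xopt_i(\btheta)) = \|\bx^*_i(\btheta) - \bhx_i\|_2$.

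Next I would set up the uniform convergence. Since $\bTheta$ is compact (assumption (iii)) and the feasible sets are absolutely bounded uniformly over $\bu, \btheta$ (assumption (i)), the optimizers $\bx^*(\btheta)$ lie in a fixed bounded set, so the loss $\ell_\mathrm{D}$ is bounded by an envelope of the form $\mathrm{const} + \|\bhx\|_2$, which is integrable by the finite-second-moment assumption (iv) (in particular finite first moment). Combined with continuity in $\btheta$ and compactness of $\bTheta$, a standard uniform law of large numbers (e.g. the one for stationary ergodic / i.i.d. data with a continuous integrable-envelope integrand, as in Newey--McFadden or Jennrich) yields
\begin{align*}
    \sup_{\btheta \in \bTheta} \left| R_N(\btheta) - R(\btheta) \right| \overset{p}{\to} 0.
\end{align*}
I would also note $R(\btheta)$ is continuous on the compact set $\bTheta$, hence attains its minimum $R^* := \min_{\btheta\in\bTheta} R(\btheta)$.

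Finally I would close the argument using the hypothesis that $\btheta_N$ is (approximately) a minimizer of $R_N$ --- more precisely, the stated condition says $\btheta_N$ is within $o_p(1)$ of $\argmin_\btheta R_N(\btheta)$. Then, by the usual sandwich: $R(\btheta_N) \le R_N(\btheta_N) + \sup|R_N - R| \le R_N(\btheta^*) + o_p(1) + \sup|R_N - R| \le R(\btheta^*) + o_p(1) + 2\sup|R_N-R| = R^* + o_p(1)$, where $\btheta^*$ is a population minimizer, and the first passage uses that $\btheta_N$ nearly minimizes $R_N$ together with continuity of $R_N$ (to absorb the $o_p(1)$ gap in $\btheta$). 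Symmetrically, $R_N(\btheta_N) \le R_N(\btheta^*) + o_p(1) \le R(\btheta^*) + o_p(1) + \sup|R_N - R| = R^* + o_p(1)$, and $R_N(\btheta_N) \ge R(\btheta_N) - \sup|R_N-R| \ge R^* - \sup|R_N-R|$. Chaining these gives $R_N(\btheta_N) \overset{p}{\to} R^*$, which is exactly the claimed convergence $\frac{1}{N}\sum_{i=1}^N \ell_\mathrm{D}(\bhx_i, \Xopt_i(\btheta_N)) \overset{p}{\to} \min_{\btheta\in\bTheta}\field{E}_\field{P}[\ell_\mathrm{D}(\bx,\Xopt(\btheta))]$.

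I expect the main obstacle to be the parametric-continuity step: showing $\btheta \mapsto \Xopt(\btheta)$ (equivalently $\bx^*(\btheta)$) is single-valued and continuous uniformly enough to feed into the ULLN. This requires carefully combining the strict convexity (for uniqueness), Berge's maximum theorem (for upper hemicontinuity of the solution correspondence), and the interiority/Slater condition (to rule out feasible-set collapse and ensure lower hemicontinuity of the constraint correspondence). A secondary technical point is handling the fact that different data points carry different forward models $\FOPCVX_i$; but since the problem is cast with a common parametrization through $\bhu_i \sim \field{P}$ and the regularity assumptions (i)--(ii) are required to hold uniformly over $\bu \in \set{U}$, the envelope and equicontinuity bounds are uniform in $i$, so the i.i.d. ULLN applies directly to the function $(\bx, \bu) \mapsto \ell_\mathrm{D}(\bx, \Xopt(\btheta))$ evaluated along the sample. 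I would keep the write-up at the level of citing the appropriate ULLN and maximum-theorem statements rather than reproving them.
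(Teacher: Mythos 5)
This survey states Theorem~\ref{thm:distance_is_consistent} without proof, citing \citet{aswani2018inverse}; your proposal reconstructs essentially the argument of that reference: uniqueness and continuity of the optimal solution map via strict convexity plus Berge's maximum theorem (with the nonempty-interior condition ensuring continuity of the constraint correspondence), a uniform law of large numbers over the compact $\bTheta$ using the integrable envelope $\mathrm{const} + \|\bhx\|_2$, and the standard M-estimator sandwich. The one step you gloss over is absorbing the $o_p(1)$ gap between $\btheta_N$ and the argmin set of the empirical risk: this needs a uniform modulus of continuity for $R_N$, which you can get by writing $|R_N(\btheta) - R_N(\btheta')| \leq 2\sup_{\bTheta}|R_N - R| + \omega(\|\btheta - \btheta'\|)$ with $\omega$ the modulus of continuity of the (continuous, hence uniformly continuous) population risk $R$ on compact $\bTheta$, rather than from pointwise continuity of $R_N$ alone. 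With that patch the argument is complete and matches the intended route.
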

Theorem~\ref{thm:distance_is_consistent} states that solving $\IOPd(\ell_{\mathrm{D}}, \empiricaldist)$ yields an estimate $\btheta_N$ that converges to the best estimate $\btheta^*$ that could be obtained with full knowledge of the data distribution $\field{P}$, i.e., the minimizer of $\IOPd(\ell_\mathrm{D}, \field{P})$. This property is known as Risk Consistency.~\citet{aswani2018inverse} also prove a stronger consistency result, Parameter Consistency, where $\btheta_N \overset{p}{\to} \btheta^*$; this property requires additional ``identifiability'' assumptions such as a strictly convex forward objective function.

Although $\IOPd(\ell_{\mathrm{D}}, \empiricaldist)$ is an NP-hard problem for general convex forward models, certain cases permit convex approximations by reformulating $\Xopt(\btheta)$ via KKT conditions (see Section~\ref{sec:classical_IO_convex}) or strong duality~\citep{aswani2018inverse, chan2018inverse, chan2018multiple}.
\begin{property}[Strong Duality for Convex Optimization]
    Assume that $\FOPCVX_i(\btheta)$ has a non-empty interior. Let $h_i(\blambda, \btheta)$ denote the Lagrangian dual function
    \begin{align*}
        h_i(\blambda, \btheta) := \inf_{\bx} \left\{ f_i(\bx, \btheta) + \blambda^\tpose \bg_{i} (\bx, \btheta) \;\Big|\; \bg_i(\bx, \btheta) \leq \bzero \right\}.
    \end{align*}
    Then, $\bx \in \Xopt_i(\btheta)$ if and only if there exists $\blambda \geq \bzero$ for which $f_i(\bx, \btheta) = h_i(\blambda, \btheta)$.
\end{property}

\begin{figure}[t]
\centering
\begin{tikzpicture}[font=\sffamily]
        
    \draw[step=1cm,gray!10,very thin] (-0.7,1.3) grid (5.7,5.7);
    \draw[ultra thick] (0, 5) -- (0, 2) -- (5, 2) -- (5, 5) -- (0, 5);
    \draw[fill=black] (5.5, 4) circle (0.05) node[above right] {$\bhx$};
    \draw[fill=black] (5, 5) circle (0.05) node[above] {$\bx_1$};
    \draw[fill=black] (0, 5) circle (0.05) node[above] {$\bx_2$};
    
    \draw [thick, ->, color=red] (2, 3.5) -- node[right]{$\btheta_1$} (2, 2.5);
    \draw [thick, ->, color=red] (2.5, 3.5) -- node[right]{$\btheta_2$} (3, 2.5);
    
    \draw [thick, dotted] (5.5, 4) -- node[above right]{$\norm{\bhx - \bx_1}_2$} (5, 5);
    \draw [thick, dotted ] (5.5, 4) -- node[below]{$\norm{\bhx - \bx_2}_2$} (0, 5);
    
    \node at (-0.2, 1.80)  {0};
    \node at (-0.2, 3.85)  {2};
    \node at (1.85, 1.80)  {2};
    \node at (3.85, 1.80)  {4};
\end{tikzpicture}
\caption{Visualization of the discontinuity of $\ell_D(\bhx, \Xopt(\btheta))$. The forward model $\FOPL(\btheta)$ is a linear program and $\bhx$ is an infeasible point. For $\btheta_1$, the nearest optimal solution to $\bhx$ is $\bx_1$. However, an infinitesimal rotation  of the estimated cost vector to the left in $\btheta_2$ forces the nearest solution to be $\bx_2$.} 
\label{fig:decision_space_is_discontinuous}
\end{figure}

For most forward models, $\ell_\mathrm{D}(\bx, \Xopt(\btheta))$ is discontinuous in $\btheta$. 
Figure~\ref{fig:decision_space_is_discontinuous} highlights an example. 
Consequently,~\citet{aswani2018inverse} introduce a parameter $\epsilon>0$ and solve the relaxed problem 
%
\begin{align}\label{eq:inverse_risk_minimization_epsilon}
\begin{split}
    \min_{\btheta, \bx_i, \blambda_i} \quad     & \frac{1}{N} \sum_{i=1}^N \norm{\bhx_i - \bx_i}_2 \\
    \st \quad   & f_i(\bx_i, \btheta) - h_i(\blambda_i, \btheta) \leq \epsilon,   \quad \forall i \in \{1, \dots, N\} \\
                & \bg_i(\bx_i, \btheta) \leq \epsilon \bone,   \quad \forall i \in \{1, \dots, N\} \\
                & \blambda_i \geq \bzero,    \quad \forall i \in \{1, \dots, N\} \\
                & \btheta \in \bTheta.
\end{split}
\end{align}
%
Rather than strictly enforcing primal feasibility and strong duality, 
problem~\eqref{eq:inverse_risk_minimization_epsilon} permits these two constraints to be slightly violated. 
The resulting program is convex in $\btheta$ for fixed $\bx_i, \blambda_i$, and in $\bx_i, \blambda_i$ for fixed $\btheta$.
Further,~\citet{aswani2018inverse} develop an enumeration solution algorithm that solves for $\btheta$ and $\bx_i, \blambda_i$ separately. While this approach works for any convex forward model, the authors also show that for strictly convex forward models, there exists an efficient decomposition algorithm that parametrizes $\bx_i$ with Nadaraya-Watson Kernel Regression using the observed decisions.



We present the special case of the enumeration algorithm for $\FOPL(\btheta)$ where we can rewrite $h(\blambda)$ using strong duality. Then, the original $\IOPd(\ell_\mathrm{D}, \empiricaldist)$ becomes a bilinear program
\begin{subequations}\label{eq:inverse_risk_minimization_lp}
\begin{align}
    \min_{\btheta, \bx_i, \blambda_i} \quad & \frac{1}{N} \sum_{i=1}^N \norm{\bhx_i - \bx_i}_p \\
    \st \quad   & \bA_i^\tpose \blambda_i = \btheta, \quad \blambda_i \geq \bzero,  \quad \forall i \in \{1, \dots, N\} \label{eq:inverse_risk_minimization_lp1} \\
                & \btheta^\tpose \bx_i = \bb_i^\tpose \blambda_i \quad\forall i \in \{1, \dots, N\} \label{eq:inverse_risk_minimization_lp3} \\
                & \bA_i \bx_i \geq \bb_i, \quad \forall i \in \{1, \dots, N\} \label{eq:inverse_risk_minimization_lp4} \\
                & \btheta \in \bTheta.
\end{align}
\end{subequations}
Analogous to~\eqref{eq:inverse_risk_minimization_epsilon}, the relaxed version of problem~\eqref{eq:inverse_risk_minimization_lp} replaces constraints~\eqref{eq:inverse_risk_minimization_lp3} and~\eqref{eq:inverse_risk_minimization_lp4} with $\btheta^\tpose \bx_i \leq \bb_i^\tpose \blambda_i + \epsilon$ and $\bA_i \bx_i \geq \bb_i - \epsilon \bone$, respectively.
In the enumeration algorithm, we discretize $\bTheta$ by a $\delta$-net, i.e., a finite set $\set{N}(\delta) \subset \bTheta$ that satisfies $\max_{\btheta \in \bTheta} \min_{\bhtheta \in \set{N}(\delta)} \|\btheta - \bhtheta\| \leq \delta$. We then enumerate over $\hat\btheta \in \set{N}(\delta)$, solve the problem for each $\hat\btheta$, and select the best parameter.

\citet{chan2018inverse} and~\citet{chan2018multiple} explore a special case of $\IOPd(\ell_{\mathrm{D}}, \empiricaldist)$ where all of the points in $\empiricaldist$ correspond to solutions for the same instance of the forward model, i.e., $\FOPL_i(\btheta) = \FOPL(\btheta)$ for all $i$. This enforces a single dual variable, i.e., $\blambda_i = \blambda$ for all $i$. 
Here, the dual feasibility constraint~\eqref{eq:inverse_risk_minimization_lp1} implies that the cost parameter lies in the row space of the constraint matrix $\btheta = \bA^\tpose \blambda$ where $\blambda \in \field{R}^m_+$. Assuming that $\bTheta$ is sufficiently large but excludes $\bzero$ (i.e., preventing trivial solutions), there must exist a $\blambda^*$ in the extreme rays of $\field{R}^m_+$ for which the corresponding $\btheta^*$ is optimal. This means that there exists $j$ where $\btheta^*$ is equal to $\ba_j$ multiplied by a normalization factor.
Furthermore,~\citet{chan2018inverse} show that when $N=1$, the inverse optimization problem has a closed-form solution.
In this special case of the forward model, the number of enumerations reduces from $|\set{N}(\delta)|$ (which depends on the size of $\bTheta$) to $m$.

The geometric property equating optimal parameters to rows of the constraint matrix $\bA$ may lead to instability in estimates~\citep{shahmoradi2021quantile, gupta2022decomposition, ahmadi2020inverse}. For example if the rows of $\bA$ correspond to vectors that point in orthogonal directions and the estimated $\btheta$ must lie within this discrete set, then seemingly small variations in the data set may yield large changes in the estimate. Consequently,~\citet{gupta2022decomposition} propose restricting the distance minimization loss to only consider vertices of the feasible set. They propose the alternative loss function $\min_{\bx} \{ \norm{\bx -\bhx}_2 \;|\; \bx \in \Xopt(\btheta) \cap \extreme(\{\bx \;|\; \bA \bx \geq \bb \}) \}$ and develop a two-phase mixed integer linear programming reformulation for the corresponding inverse problem.


\subsection{Sub-optimality in the objective function value}\label{subsec:suboptimality}

Another intuitive and popular loss function to minimize when estimating $\btheta$ is the degree of sub-optimality of the observed decisions $\bhx_i$ under the estimated models $\FOP_i(\btheta)$. 
We can evaluate sub-optimality with two potential loss functions.
\begin{itemize}
    \item \textbf{Absolute Sub-optimality:} This loss function measures the difference in the objective function values of the observed decisions with respect to the estimated optimal values, i.e., 
    \begin{align*}
        \ell_\mathrm{ASO}\left(\bhx, \Xopt(\btheta)\right) := \left| f(\bhx, \btheta) - \min_{\bx \in \Xfeas(\btheta)} f(\bx, \btheta) \right|.
    \end{align*}
    
    \item \textbf{Relative Sub-optimality:} This loss function measures the competitive ratio of the objective function values of the observed decisions with respect to the estimated optimal values, i.e.,
    \begin{align*}
        \ell_\mathrm{RSO}\left(\bhx, \Xopt(\btheta)\right) := \left| \frac{f(\bhx, \btheta)}{\min_{\bx \in \Xfeas(\btheta)} f(\bx, \btheta)} - 1 \right|.
    \end{align*}
    
\end{itemize}

Let $\IOPd(\ell_\mathrm{ASO}, \field{P}_N)$ and $\IOPd(\ell_\mathrm{RSO}, \empiricaldist)$ denote the Inverse Absolute Sub-optimality and Inverse Relative Sub-optimality problems, respectively. 
While these inverse problems do not possess statistical properties, they often lead to tractable optimization problems, especially for linear forward models. Moreover, $\IOPd(\ell_\mathrm{ASO}, \empiricaldist)$ yields generalization bounds (see Section~\ref{sec:datadriven_vi}).

\subsubsection{Absolute sub-optimality loss.}\label{subsubsec:abs_suboptimality}

We first consider a special case where the observed decisions are known to be feasible, i.e., $\bhx_i \in \Xfeas_i(\btheta)$ for all $(\bhx_i, \FOP_i(\btheta)) \in \dataset$ and $\btheta \in \bTheta$. Here, we can remove the absolute values in the loss function, reformulating the inverse problem to
\begin{align*}
    \min_{\btheta, \bx_i} \quad & \frac{1}{N} \sum_{i=1}^N (f_i(\bhx_i, \btheta) - f_i(\bx_i, \btheta)) \\
    \st \quad   & \bx_i \in \Xopt_{i}(\btheta), \quad \forall i \in \{1, \dots, N\} \\
                & \btheta \in \bTheta.
\end{align*}
This problem can be solved using difference of convex programming techniques.

For linear forward models, the general (i.e., without assuming feasible observations) Inverse Absolute Sub-optimality problem can be reformulated and efficiently solved using strong duality.~\citet{chan2014generalized, chan2018inverse} explore this problem for a single observation.~\citet{chan2018multiple} extend these results to multiple observed decisions. We provide a slight generalization of their result below. 
\begin{theorem}\label{thm:aso_general}
    Consider $\FOPL_i(\btheta)$. Then, $\IOPd(\ell_\mathrm{ASO}, \empiricaldist)$ is equivalent to the linear program 
    \begin{align}\label{eq:aso_lp_form}
    \begin{split}
        \min_{\btheta, \blambda_i} \quad & \frac{1}{N} \sum_{i=1}^N | \btheta^\tpose \bhx_i - \bb_i^\tpose \blambda_i | \\
        \st \quad   & \bA_i^\tpose \blambda_i = \btheta, \quad \blambda_i \geq \bzero, \quad \forall i \in \{1, \dots, N\} \\
                    & \btheta \in \bTheta.
    \end{split}
    \end{align}
\end{theorem}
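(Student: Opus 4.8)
The plan is to reformulate the inverse absolute sub-optimality problem directly by substituting a strong-duality characterization of the optimal value $\min_{\bx \in \Xfeas_i(\btheta)} f(\bx, \btheta)$ for each linear forward model $\FOPL_i(\btheta)$. Recall that $\FOPL_i(\btheta) = \min\{\btheta^\tpose \bx \mid \bA_i \bx \geq \bb_i\}$, whose dual is $\max\{\bb_i^\tpose \blambda_i \mid \bA_i^\tpose \blambda_i = \btheta, \blambda_i \geq \bzero\}$. The key observation, which I would establish first, is that evaluating $\ell_\mathrm{ASO}(\bhx_i, \Xopt_i(\btheta)) = |f(\bhx_i,\btheta) - \min_{\bx} f(\bx,\btheta)| = |\btheta^\tpose \bhx_i - z_i^*(\btheta)|$ where $z_i^*(\btheta)$ is the optimal forward value. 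Since $\bhx_i$ is feasible by the standing assumption that $\bA_i \bhx_i \geq \bb_i$, we have $\btheta^\tpose \bhx_i \geq z_i^*(\btheta)$, so the absolute value could in principle be dropped; but I would keep it to match the stated theorem and because feasibility of $\bhx_i$ was only assumed in the objective-estimation subsection — the cleanest route is to argue the absolute-value form is correct regardless and then note it simplifies.

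Second, I would replace $z_i^*(\btheta)$ using LP strong duality: for any fixed $\btheta \in \bTheta$ for which $\FOPL_i(\btheta)$ is bounded, $z_i^*(\btheta) = \max\{\bb_i^\tpose \blambda_i \mid \bA_i^\tpose \blambda_i = \btheta, \blambda_i \geq \bzero\}$. The subtle point is the direction of the reformulation: in $\IOPd(\ell_\mathrm{ASO}, \empiricaldist)$ we are \emph{minimizing} over $\btheta$ the quantity $\frac{1}{N}\sum_i |\btheta^\tpose \bhx_i - z_i^*(\btheta)|$, and replacing the inner $\max$ over $\blambda_i$ that defines $z_i^*$ with a free minimization over $\blambda_i$ requires care. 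The argument is: for feasible $\bhx_i$, $\btheta^\tpose\bhx_i - \bb_i^\tpose\blambda_i \geq 0$ for every dual-feasible $\blambda_i$ (by weak duality), and equals the true sub-optimality gap exactly when $\blambda_i$ is dual-optimal; hence $\min_{\blambda_i \text{ dual-feasible}} (\btheta^\tpose \bhx_i - \bb_i^\tpose \blambda_i) = \btheta^\tpose\bhx_i - z_i^*(\btheta) = \ell_\mathrm{ASO}(\bhx_i,\Xopt_i(\btheta))$. So jointly minimizing over $\btheta$ and $\blambda_i$ with the dual-feasibility constraints $\bA_i^\tpose\blambda_i = \btheta$, $\blambda_i \geq \bzero$ recovers exactly $\IOPd(\ell_\mathrm{ASO},\empiricaldist)$, yielding the program~\eqref{eq:aso_lp_form}. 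The absolute value in~\eqref{eq:aso_lp_form} is then redundant under feasibility but harmless, and is needed if one does not assume $\bA_i\bhx_i \geq \bb_i$.

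Third, I would confirm that~\eqref{eq:aso_lp_form} is genuinely a linear program: the objective $\frac{1}{N}\sum_i |\btheta^\tpose\bhx_i - \bb_i^\tpose\blambda_i|$ is piecewise-linear convex in $(\btheta, \blambda_i)$ and linearizable with auxiliary variables $t_i \geq \pm(\btheta^\tpose\bhx_i - \bb_i^\tpose\blambda_i)$; the constraints $\bA_i^\tpose\blambda_i = \btheta$ and $\blambda_i \geq \bzero$ are linear; and $\btheta \in \bTheta$ is linear provided $\bTheta$ is polyhedral (I would state this hypothesis explicitly, consistent with the assumptions used for problem~\eqref{eq:ahuja_ilo}). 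One should also handle the edge case where no dual-feasible $\blambda_i$ exists — i.e., $\btheta$ is outside the row cone of $\bA_i$ — in which case $\FOPL_i(\btheta)$ is unbounded below and the sub-optimality gap is infinite; such $\btheta$ are automatically excluded from~\eqref{eq:aso_lp_form} by infeasibility of the dual constraints, which is exactly the right behavior.

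The main obstacle I anticipate is the equivalence direction and the role of the absolute value: one must be careful that replacing $\ell_\mathrm{ASO}$, which contains an \emph{inner maximization} (the dual LP defining $z_i^*$), by a joint minimization does not understate the loss. The resolution hinges on the sign: weak duality forces $\btheta^\tpose\bhx_i - \bb_i^\tpose\blambda_i \geq 0$ for feasible $\bhx_i$, so the joint minimization drives $\bb_i^\tpose\blambda_i$ \emph{up} to $z_i^*(\btheta)$ rather than down, correctly recovering the gap. If feasibility of $\bhx_i$ is not assumed, $\btheta^\tpose\bhx_i$ can fall below $z_i^*(\btheta)$, the sign flips, and one genuinely needs $|\cdot|$ together with $\min$ over $\blambda_i$ to capture $|\btheta^\tpose\bhx_i - z_i^*(\btheta)|$ — this is precisely why~\eqref{eq:aso_lp_form} is written with absolute values, and I would present the argument in this generality so that Theorem~\ref{thm:aso_general} subsumes both the feasible and infeasible cases treated by \citet{chan2014generalized, chan2018inverse, chan2018multiple}.
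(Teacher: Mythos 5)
Your route is the same one the paper intends: characterize the optimal value of $\FOPL_i(\btheta)$ via LP strong duality, substitute the dual, and observe that the inner maximization collapses into the outer minimization because weak duality pins the sign of $\btheta^\tpose\bhx_i - \bb_i^\tpose\blambda_i$. Under the standing assumption $\bA_i\bhx_i \geq \bb_i$, your argument is correct and complete, including the treatment of dual-infeasible $\btheta$ and the linearization of the objective.

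However, the claim in your final paragraph — that the absolute value together with $\min$ over $\blambda_i$ still recovers $|\btheta^\tpose\bhx_i - z_i^*(\btheta)|$ when $\bhx_i$ is infeasible — is false, and this is exactly the trap you set out to avoid. The set of attainable dual objective values $\{\bb_i^\tpose\blambda_i : \bA_i^\tpose\blambda_i = \btheta,\ \blambda_i\geq\bzero\}$ is an interval with supremum $z_i^*(\btheta)$; if $\btheta^\tpose\bhx_i$ lies \emph{inside} that interval, the joint minimization drives $|\btheta^\tpose\bhx_i - \bb_i^\tpose\blambda_i|$ to zero rather than to the true gap $z_i^*(\btheta) - \btheta^\tpose\bhx_i$. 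Concretely, take $n=2$ with constraints $x_1\geq 1$, $x_2\geq 1$, $x_1+x_2\geq 3$ and $\btheta = (1,1)$, so $z^*(\btheta)=3$ and the dual-feasible values $\bb^\tpose\blambda = 2+t$ sweep $[2,3]$. For the infeasible point $\bhx = (\tfrac12, 2)$ one has $\btheta^\tpose\bhx = \tfrac52$ and $\ell_\mathrm{ASO}=\tfrac12$, yet choosing $t=\tfrac12$ makes $|\btheta^\tpose\bhx - \bb^\tpose\blambda| = 0$. So the equivalence genuinely requires primal feasibility of each $\bhx_i$; you should state $\bA_i\bhx_i\geq\bb_i$ as a hypothesis (it is the paper's standing assumption when estimating objectives) rather than present the absolute-value form as subsuming the infeasible case. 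With that hypothesis in place the absolute value in~\eqref{eq:aso_lp_form} is, as you note, redundant but harmless.
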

Problem~\eqref{eq:aso_lp_form} generalizes the Inverse Linear Optimization problem of~\citet{ahuja2001inverse} from Section~\ref{subsec:linear}. If $\bhx_i$ are all optimal for some $\btheta$, then the optimal objective value will be 0 and we can replace the objective with constraints $\btheta^\tpose \bhx_i = \bb_i^\tpose \blambda$ for all $i$, recovering the original problem~\eqref{eq:ahuja_ilo}.

The number of constraints and variables in problem~\eqref{eq:aso_lp_form} scale with the number of data points. If the instances of the forward models are large optimization problems, then the inverse problem can quickly become difficult to solve. 
However, similar to the Minimum Distance loss setting,~\citet{chan2018multiple} show that when the forward model is the same for all of the observed decisions, i.e., $\FOPL_i(\btheta) = \FOPL(\btheta)$ for all $i$, then problem \eqref{eq:aso_lp_form} can be rewritten as a smaller linear program
\begin{align}\label{eq:aso_lp_form_one_feas_region}
    \begin{split}
        \min_{\btheta, \blambda} \quad & \frac{1}{N} \sum_{i=1}^N | \btheta^\tpose \bhx_i - \bb^\tpose \blambda | \\
        \st \quad   & \bA^\tpose \blambda = \btheta, \quad \blambda \geq \bzero  \\
                    & \btheta \in \bTheta.
    \end{split}
\end{align}
The key observation is that when the forward model is assumed to be the same across instances, we only need to control a single dual variable rather than $N$ of them.

The Absolute Sub-optimality loss can also be used to estimate parameters in the constraints of linear forward models.
\citet{chan2020inverse} and~\citet{ghobadi2021inferring} generalize their classical methods for constraint estimation (see Section~\ref{sec:classical_io_linear_estimating_constraints}) to the data-driven setting. Recall that~\citet{chan2020inverse} consider instances of forward optimization models $\FOPL_i(\bPhi)$ where the cost and right-hand-side constraint vectors $\bc_i$ and $\bb_i$, respectively, are given. In the data-driven setting, they formulate the problem
\begin{align*}
    \min_{\bPhi, \blambda_i} \quad  & \frac{1}{N} \sum_{i=1}^N | \bc_i^\tpose \bhx_i - \bb_i^\tpose \blambda_i | \\
    \st \quad   & \bPhi^\tpose \blambda_i = \btheta,  \quad \blambda_i \geq \bzero,  \quad\forall i \in \{1, \dots, N\} \\
                & \bPhi \bhx_i \geq \bb_i, \quad \forall i \in \{1, \dots, N\} \\
                & \bPhi \in \bTheta.
\end{align*}
This problem is a bilinear program, but~\citet{chan2020inverse} show that when the forward models are identical across instances, under a well-behaved $\bTheta$, the problem has an analytic solution.

%
%

\subsubsection{Relative sub-optimality loss.}
\label{subsubsec:data-driven_IO_rdg}

Minimizing the Relative Sub-optimality loss function leads to a fractional programming problem, which are generally difficult to solve \citep{frenk2005fractional}. However, this fractional component can be removed if the forward model satisfies a scaling invariance property. Specifically, if we can scale $\btheta$ while preserving the same optimal value and solution set, then the denominator in the loss function becomes irrelevant up to a scaling factor. 
\begin{definition}[Scaling invariance]
    A forward model $\FOP(\btheta)$ is invariant to scaling if $\bx^* \in \Xopt(\btheta)$ implies that (i) $\bx^* \in \Xopt(\alpha \btheta)$ and (ii) $f(\bx^*, \alpha \btheta) = \alpha f(\bx^*, \btheta)$ for any $\alpha > 0$.
\end{definition}
If the forward model satisfies this property and has a non-negative optimal value, then we can always scale the parameter while preserving the solution set. Here, $\IOPd(\ell_{\mathrm{RSO}}, \empiricaldist)$ becomes
\begin{align*}
    \min_{\btheta, \bx_i} \quad  & \frac{1}{N} \sum_{i=1}^N | f_i(\bhx_i, \btheta) - 1 | \\
    \st \quad   & f_i(\bx_i, \btheta) = 1, \quad \forall i \in \{1, \dots, N\} \\
                & \bx_i \in \Xopt_i(\btheta), \quad \forall i \in \{1, \dots, N\} \\
                & \btheta \in \bTheta
\end{align*}
Forward models with basis-function objectives $f(\bx, \btheta) = \sum_{b=1}^B \theta_b f^{(b)}(\bx)$ are scaling invariant. Note that setting $f^{(b)} = x_b$ for all $b$ yields a linear objective, meaning $\FOPL_i(\btheta)$ is also scaling invariant.

\citet{chan2014generalized, chan2018inverse} minimize the Relative Sub-optimality loss for a single decision $(\bhx, \FOPL(\btheta))$. \citet{chan2018multiple} extend these results to the case of multiple observed decisions. In both cases, they characterize $\Xopt(\btheta)$ using strong duality. We provide below a revised version of their results for multiple decisions and instances.
\begin{theorem}\label{thm:io_rdg}
    Consider $\FOPL_{i}(\btheta)$. If $\bb_i > \bzero$ (component-wise) for all $i \in \{1, \dots, N\}$, then $\IOPd(\ell_{\mathrm{RSO}}, \empiricaldist)$ is equivalent to the following linear program 
    \begin{align}\label{eq:rso_lp_form}
    \begin{split}
        \min_{\btheta,\blambda_i} \quad & \frac{1}{N} \sum_{i=1}^N | \btheta^\tpose \bhx_i - 1 | \\
        \st \quad   & \bb_i^\tpose \blambda_i = 1, \quad \forall i \in \{1, \dots, N\} \\
                    & \bA_i^\tpose \blambda_i = \btheta, \quad \blambda_i \geq \bzero, \quad \forall i \in \{1, \dots, N\} \\
                    & \btheta \in \bTheta
    \end{split}
    \end{align}
\end{theorem}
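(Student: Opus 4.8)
The plan is to reformulate the Inverse Relative Sub-optimality problem by exploiting the scaling invariance of linear forward models, which the excerpt already established for basis-function objectives (and hence for $\FOPL_i(\btheta)$). First I would observe that because $\bb_i > \bzero$, the forward model $\FOPL_i(\btheta)$ has a strictly positive optimal value whenever $\btheta$ is such that the problem is bounded and feasible (the origin direction of the recession cone matters here, but with $\bb_i > \bzero$ and the scaling argument we can always normalize). By scaling invariance, for any candidate $\btheta$ we may rescale it by a positive factor $\alpha_i$ so that the optimal value of $\FOPL_i(\alpha_i \btheta)$ equals $1$, without changing the optimal solution set; the ratio $f(\bhx_i,\btheta)/\min_{\bx} f(\bx,\btheta)$ is itself invariant to this scaling, so we lose nothing. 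This lets us rewrite $\ell_\mathrm{RSO}(\bhx_i, \Xopt_i(\btheta)) = |f(\bhx_i, \btheta) - \min_{\bx \in \Xfeas_i(\btheta)} f(\bx, \btheta)| / \min_{\bx \in \Xfeas_i(\btheta)} f(\bx,\btheta)$ and, after the normalization that forces the denominator to $1$, as $|\btheta^\tpose \bhx_i - 1|$ subject to the constraint that the optimal value is $1$.

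Next I would encode the ``optimal value equals $1$'' condition via linear programming strong duality, exactly as in the Inverse Absolute Sub-optimality development (Theorem~\ref{thm:aso_general}). The dual of $\FOPL_i(\btheta) = \min\{\btheta^\tpose\bx \mid \bA_i\bx \geq \bb_i\}$ is $\max\{\bb_i^\tpose\blambda_i \mid \bA_i^\tpose\blambda_i = \btheta, \blambda_i \geq \bzero\}$. Requiring the common optimal value to be $1$ is equivalent to requiring that there exist a dual-feasible $\blambda_i$ with $\bb_i^\tpose\blambda_i = 1$ \emph{and} that every feasible primal point has objective at least $1$; but the latter is automatically implied once we have a dual-feasible $\blambda_i$ achieving value $1$ (weak duality gives $\geq 1$ for all primal-feasible points, and then $\bhx_i$ being evaluated against the true optimum is what the loss measures). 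So the constraints become $\bb_i^\tpose\blambda_i = 1$ and $\bA_i^\tpose\blambda_i = \btheta$, $\blambda_i \geq \bzero$, and the objective becomes $\frac1N\sum_i |\btheta^\tpose\bhx_i - 1|$, which is Problem~\eqref{eq:rso_lp_form}. The absolute value is retained because $\bhx_i$ need not be feasible or optimal, so $\btheta^\tpose\bhx_i$ can fall on either side of $1$; and $|\,\cdot\,|$ in a minimization objective is standard to linearize with an auxiliary variable, so the final model is genuinely a linear program.

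The main obstacle I anticipate is making the scaling-and-normalization step fully rigorous at the level of the equivalence of optimization problems rather than pointwise in $\btheta$: one has to argue that optimizing over $\btheta \in \bTheta$ in the original fractional formulation is equivalent to optimizing over the ``normalized'' $\btheta$ (those for which some dual-feasible $\blambda_i$ gives $\bb_i^\tpose\blambda_i = 1$) in the LP formulation, and that no optimal solution is lost. This requires checking (i) that for every $\btheta \in \bTheta$ with finite loss, the forward problems $\FOPL_i(\btheta)$ are feasible and bounded with positive optimal value (feasibility of the dual and $\bb_i > \bzero$ handle positivity), (ii) that $\bTheta$ is closed under the relevant positive scaling or that the normalization can be absorbed — here the statement implicitly assumes a scaling-invariant setup such as $\bTheta$ being a cone or the loss being genuinely ratio-based, so I would state this caveat explicitly, and (iii) that the direction of the equivalence which reconstructs a solution of the original problem from an LP solution works, i.e., given optimal $(\btheta, \blambda_i)$ for~\eqref{eq:rso_lp_form}, this $\btheta$ (or its appropriate rescaling per instance) attains the same relative sub-optimality loss. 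Beyond this bookkeeping, the remaining steps — substituting the dual characterization and linearizing the absolute value — are routine.
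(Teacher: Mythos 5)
Your proposal follows essentially the same route as the paper: use the scaling-invariance of the linear forward model (with $\bb_i>\bzero$ guaranteeing a positive optimal value) to normalize the optimal value to $1$ and eliminate the fractional denominator, then encode that normalized optimality condition via LP strong duality as dual feasibility $\bA_i^\tpose\blambda_i=\btheta$, $\blambda_i\geq\bzero$ together with $\bb_i^\tpose\blambda_i=1$, leaving the linearizable objective $\frac{1}{N}\sum_i|\btheta^\tpose\bhx_i-1|$. The caveats you raise about reconciling the per-instance normalization with a single $\btheta$ and with the structure of $\bTheta$ are exactly the points the paper also treats only informally, so your argument is at the same level of rigor and requires no further comparison.
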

Similar to the Absolute Sub-optimality case, when the forward model is the same across instances, the inverse problem can be drastically reduced in size~\citep{chan2018multiple}. Furthermore, the positivity assumption $\bb_i > \bzero$ is not necessary for solving $\IOPd(\ell_{\mathrm{RSO}}, \empiricaldist)$; we refer to~\citet{chan2018multiple} for the generalization. 

\citet{troutt2005linear, ref:troutt_ejor08} explore the Relative Sub-optimality loss
for the problem of jointly estimating the objective parameters and constraint matrix for $\FOPL((\btheta, \bPhi)) := \min \{ \btheta^\tpose \bx \;|\; \bPhi \bx \geq \bb \}$. 
Using scaling invariance, they reformulate their inverse problem to the one in Theorem~\ref{thm:io_rdg}, except now replacing $\bA_i$ with variables $\bPhi_i$. The resulting inverse problem is non-convex, but~\citet{troutt2005linear, ref:troutt_ejor08} propose an enumeration algorithm by discretizing the set of potential $\bPhi$.

\subsection{Variational inequality loss} \label{sec:datadriven_vi}

The first-order Variational Inequality (VI) is an optimality criterion for any general convex optimization problem with a differentiable objective function. 
\begin{property}[First-order VI]
    The solution $\bhx_i$ is optimal for $\FOPCVX_i(\btheta)$ if and only if
    \begin{align*}
        \nabla_{\bx} f_i(\bhx_i, \btheta)^\tpose \left(\bhx_i - \bx \right) \leq 0, \quad \forall \bx \in \Xfeas_i(\btheta).
    \end{align*}
\end{property}
\citet{bertsimas2015data} use the violation of this variational inequality as a loss measure for data-driven inverse optimization. They propose the following Variational Inequality loss function: 
\begin{align*}
    \ell_{\mathrm{VI}}\left( \bhx, \Xopt(\btheta) \right) := \max_{\bx \in \Xfeas(\btheta)} \;\;  \nabla_{\bx} f(\bhx, \btheta)^\tpose\left( \bhx - \bx \right).
\end{align*}
Let $\IOPd(\ell_\mathrm{VI}, \empiricaldist)$ denote the Inverse Variational Inequality problem. This problem is similar to the Sub-optimality problems in that it can usually be solved using convex programming. However, the inverse problem does not immediately yield statistical consistency, meaning there is no guarantee that the empirical risk problem $\IOPd(\ell_\mathrm{VI}, \empiricaldist)$ converges in probability to the expected risk problem $\IOPd(\ell_\mathrm{VI}, \field{P})$. Instead,~\citet{bertsimas2015data} develop a generalization bound, i.e., an upper bound on the expected risk as a function of the empirical risk. This bound holds specifically for when the forward models are conic optimization problems.
\begin{theorem}[\citet{bertsimas2015data}]\label{thm:ivi_generalization}
    Consider the Conic Forward Optimization Model 
    \begin{align*}
        \FOPCSF_i(\btheta) := \min_{\bx} \left\{ f_i(\bx, \btheta) \;|\; \bA_i \bx = \bb_i, \; \bx \in \set{C}_i \right\},
    \end{align*}
    where $\set{C}_i$ are convex cones. Suppose for $(\bx, \FOPCSF(\btheta)) \sim \field{P}$ that
    \begin{enumerate}
        \item[(i)] $\bA \bx = \bb, \bx \in \set{C}$ almost surely.
        
        \item[(ii)] There exists $\btx \in \interior(\set{C})$ such that $\bA \btx = \bb$ almost surely.
        
        \item[(iii)] The feasible sets are absolutely bounded in a ball of radius $R$.
        
    \end{enumerate}
    Let $\bar{B} := 2 \sup \{ f(\bx, \btheta) \;|\; \norm{\bx}_2 \leq R, \; \btheta \in \bTheta \}$. Then with probability at least $1-\beta$, 
    \begin{align*}
        \field{E}_\field{P} \left[ \ell_\mathrm{VI} \left( \bx, \Xopt(\btheta) \right) \right] \leq \frac{1}{N} \sum_{i=1}^N \ell_\mathrm{VI}\left(\bhx_i, \Xopt_i(\btheta)\right) + \frac{1}{\sqrt{N}} \left( 4 \bar{B} + \sqrt{8\bar{B} \log(2/\beta)} \right) \quad \forall \btheta \in \bTheta.
    \end{align*}
\end{theorem}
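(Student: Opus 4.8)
The result is a uniform-in-$\btheta$ generalization bound for an empirical risk functional, so the plan is to run the standard decomposition ``supremum of the empirical process $\le$ its mean $+$ a concentration term,'' with the conic structure of $\FOPCSF(\btheta)$ entering only to control the mean. First I would establish that the loss is uniformly bounded, $\ell_{\mathrm{VI}}(\cdot,\Xopt(\btheta))\in[0,\bar B]$ for all $\btheta\in\bTheta$: by assumption (i) the observed $\bhx$ is feasible for its own forward model, so taking $\bx=\bhx$ in the inner maximization gives $\ell_{\mathrm{VI}}(\bhx,\Xopt(\btheta))\geq 0$, while the gradient inequality for the convex objective yields $\nabla_{\bx} f(\bhx,\btheta)^\tpose(\bhx-\bx)\leq f(\bhx,\btheta)-f(\bx,\btheta)$, and since assumptions (i) and (iii) confine both $\bhx$ and every competitor $\bx$ to the ball of radius $R$, the maximum over feasible $\bx$ is at most $\bar B$ (the factor $2$ in $\bar B$ absorbing any negativity of $f$ on the ball). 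I would also record the dual form of the loss obtained from strong conic duality, available because of the Slater point in assumption (ii): $\ell_{\mathrm{VI}}(\bhx,\Xopt(\btheta))=\nabla_{\bx} f(\bhx,\btheta)^\tpose\bhx-\max_{\by}\{\bb^\tpose\by\mid \nabla_{\bx} f(\bhx,\btheta)-\bA^\tpose\by\in\set{C}^{*}\}$, which displays $\ell_{\mathrm{VI}}$ as a benign function of the data point --- affine in $\bhx$ when $f$ is linear, and otherwise a Lipschitz composition.

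Next, introduce the empirical process $Z:=\sup_{\btheta\in\bTheta}\bigl(\field{E}_\field{P}[\ell_{\mathrm{VI}}(\bx,\Xopt(\btheta))]-\tfrac1N\sum_{i=1}^N\ell_{\mathrm{VI}}(\bhx_i,\Xopt_i(\btheta))\bigr)$; the claimed inequality is precisely the bound $Z\leq 4\bar B/\sqrt N+\sqrt{8\bar B\log(2/\beta)}/\sqrt N$ holding with probability at least $1-\beta$. Replacing a single sample perturbs $Z$ by at most $\bar B/N$, so a concentration inequality for the supremum of a bounded empirical process --- McDiarmid's bounded-differences inequality, or a variance-aware refinement of Talagrand/Bousquet type --- yields $Z\leq\field{E}[Z]+\tfrac{1}{\sqrt N}\sqrt{8\bar B\log(2/\beta)}$ with probability $\geq 1-\beta$, the precise constant being routine bookkeeping. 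It then remains to show $\field{E}[Z]\leq 4\bar B/\sqrt N$.

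When $f$ is linear this last step is direct: $\nabla_{\bx} f(\bhx,\btheta)=\btheta$ is data-independent, so by the dual form $\ell_{\mathrm{VI}}(\bhx,\Xopt(\btheta))=\btheta^\tpose\bhx+c(\btheta)$ with $c(\btheta)$ not depending on $\bhx$, the intercept cancels in the difference of population and empirical risks, and $Z=\sup_{\btheta}\btheta^\tpose\bigl(\field{E}_\field{P}[\bx]-\tfrac1N\sum_i\bhx_i\bigr)\leq\sup_{\btheta}\norm{\btheta}_{*}\cdot\norm{\field{E}_\field{P}[\bx]-\tfrac1N\sum_i\bhx_i}_2$; since (i) and (iii) give $\norm{\bx}_2\leq R$ almost surely, a second-moment estimate gives $\field{E}\bigl[\norm{\field{E}_\field{P}[\bx]-\tfrac1N\sum_i\bhx_i}_2\bigr]\leq R/\sqrt N$, and using $\bar B=2\sup\{f(\bx,\btheta)\mid \norm{\bx}_2\leq R\}=2R\sup_{\btheta}\norm{\btheta}_{*}$ we conclude $\field{E}[Z]\leq \bar B/(2\sqrt N)$. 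For a general convex parametric objective the intercept no longer cancels, and I would instead symmetrize, $\field{E}[Z]\leq 2\,\mathcal{R}_N(\set{L})$ for the loss class $\set{L}=\{\bhx\mapsto\ell_{\mathrm{VI}}(\bhx,\Xopt(\btheta))\mid\btheta\in\bTheta\}$, and bound $\mathcal{R}_N(\set{L})$ by contracting through the $R$-Lipschitz map $\bv\mapsto\max_{\by}\{\bb^\tpose\by\mid \bv-\bA^\tpose\by\in\set{C}^{*}\}$ together with the structure of $\bhx\mapsto\nabla_{\bx} f(\bhx,\btheta)$, again invoking $\norm{\bhx}_2\leq R$. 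Composing this with the concentration step (and absorbing loose constants) gives the theorem. The step I expect to be the main obstacle is this complexity estimate for a general $f(\bx,\btheta)$: making the contraction precise and checking that nothing beyond $R$ and $\sup_{\btheta}f$ --- in particular no dependence on the ambient dimension or on $\bTheta$ --- enters the bound, along with nailing the explicit constants $4$ and $8$.
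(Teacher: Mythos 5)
This theorem is quoted by the survey from \citet{bertsimas2015data} without proof; the only hint the survey gives about the argument is the pointer to the Rademacher-complexity framework of \citet{bartlett2002rademacher}, and your overall architecture (boundedness of the loss, bounded-differences concentration of the supremum, symmetrization, then a complexity bound for the loss class) is indeed the right skeleton for that framework. Your boundedness step is also sound: feasibility of $\bhx$ gives $\ell_\mathrm{VI} \geq 0$, and the gradient inequality plus assumptions (i) and (iii) gives the upper bound of order $\bar{B}$.

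The genuine gap is exactly where you predicted it, and it is not just ``bookkeeping.'' First, your linear-case shortcut is incorrect: you claim the intercept $c(\btheta) = -\min_{\bx \in \Xfeas}\btheta^\tpose\bx$ is data-independent and cancels between the population and empirical risks, but in this theorem each observation carries its own forward instance $(\bA_i, \bb_i, \set{C}_i)$, so the intercept $-\min_{\bx\in\Xfeas_i}\btheta^\tpose\bx$ is itself a random function of the sample and does not cancel; the empirical process is not $\sup_{\btheta}\btheta^\tpose(\field{E}_\field{P}[\bx]-\tfrac1N\sum_i\bhx_i)$. Second, and more fundamentally, the step $\field{E}[Z]\leq 4\bar{B}/\sqrt{N}$ is where the entire content of the theorem lives, and it is asserted rather than proved: uniform boundedness of a function class by $\bar{B}$ gives no decay in $N$ at all (the Rademacher complexity of a generic $[0,\bar{B}]$-valued class is $\Theta(\bar{B})$), so one must exploit the specific structure of $\{(\bhx,\Xfeas)\mapsto\ell_\mathrm{VI}(\bhx,\Xopt(\btheta))\}_{\btheta\in\bTheta}$ --- in the original paper, the conic-duality representation of the loss and the boundedness of $\bTheta$ are what deliver a dimension-free $O(\bar{B}/\sqrt{N})$ complexity bound, and your contraction sketch does not identify which Lipschitz composition of which bounded linear class is being contracted. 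Finally, note that the deviation term as printed, $\sqrt{8\bar{B}\log(2/\beta)}/\sqrt{N}$, scales as $\sqrt{\bar{B}}$, which a bounded-differences argument cannot produce (McDiarmid yields $\bar{B}\sqrt{\log(2/\beta)}$ up to constants); either a variance-sensitive inequality is needed, which you mention but do not carry out, or the constant should be read as $\bar{B}\sqrt{8\log(2/\beta)}$, in which case your concentration step is fine --- but you should not wave this off as routine, since as written your route does not reach the stated bound.
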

Theorem~\ref{thm:ivi_generalization} is a standard type of generalization bound used to evaluate machine learning models~\citep{bartlett2002rademacher}. For any estimated parameter, the theorem bounds the expected risk of variational inequality violation as a function of the empirical risk as observed by the data set and a constant factor that is a function of the number of points $N$ and the size of the feasible set $\bar{B}$. This constant factor scales as $O(1/\sqrt{N})$: with larger data sets of decisions, the empirical risk provides an increasingly accurate approximation of the expected risk. 

We first remark that for linear forward models, the loss functions for Inverse Variational Inequality and Inverse Absolute Sub-optimality are equivalent:
\begin{align*}
        \ell_{\mathrm{VI}}\left( \bhx, \Xopt(\btheta) \right) &= \max_{\bx \in \Xfeas} \;   \btheta^\tpose\left( \bhx - \bx \right) 
        = \btheta^\tpose \bhx - \min_{\bx \in \Xfeas} \btheta^\tpose \bx 
        = \left|\btheta^\tpose \bhx - \min_{\bx \in \Xfeas} \btheta^\tpose \bx \right| = \ell_\mathrm{ASO}(\bhx, \Xopt(\btheta))
\end{align*}
%
Recall that $\FOPL(\btheta)$ is a special case of $\FOPCSF(\btheta)$ by setting $\set{C} = \field{R}^+$. 
Then, the generalization bound in Theorem~\ref{thm:ivi_generalization} also applies for Inverse Absolute Sub-optimality with linear forward models. 
Furthermore, since the Inverse Absolute Sub-optimality problem possesses efficient solution algorithms, the Inverse Variational Inequality problem can be easily solved. 

In general, computing $\ell_\mathrm{VI}(\bhx, \Xopt(\btheta))$ requires maximizing a convex optimization problem, and thus, minimizing $\IOPd(\ell_\mathrm{VI}, \empiricaldist)$ outright requires solving a minimax optimization problem. However,~\citet{bertsimas2015data} observe that conic optimization problems admit a convenient dual form~\citep{aghassi2006solving}. This yields a convex reformulation. 
\begin{theorem}[\citet{bertsimas2015data}]\label{thm:ivi_duality_solution}
    If for each $(\bhx_i, \FOPCSF_i(\btheta))$, there exists $\btx \in \interior(\set{C}_i)$ such that $\bA_i\btx = \bb_i$, then $\IOPd(\ell_\mathrm{VI}, \empiricaldist)$ is equivalent to the following problem 
    \begin{align*}
        \min_{\bepsilon, \btheta, \blambda} \quad  & \sum_{i=1}^N \left| \nabla_{\bx} f_i(\bhx_i, \btheta)^\tpose \bhx_i - \bb_i^\tpose \blambda_i \right| \\
    \st\quad    & \nabla_{\bx} f_i(\bhx_i, \btheta) - \bA_{i}^\tpose \blambda_i \in \set{C}_i   \quad \forall i \in \{1, \dots, N\} \\
                & \btheta \in \bTheta.
    \end{align*}
    Furthermore if $\nabla_{\bx} f_i(\bx, \btheta)$ is a linear function of $\btheta$, then the above problem is convex. 
\end{theorem}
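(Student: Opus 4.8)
The plan is to compute $\ell_\mathrm{VI}(\bhx_i, \Xopt_i(\btheta))$ explicitly for each data point by invoking conic duality on the inner maximization, and then assemble the resulting per-point expressions into a single minimization over $\btheta$. Fix a data point $(\bhx_i, \FOPCSF_i(\btheta))$. By definition, $\ell_\mathrm{VI}(\bhx_i, \Xopt_i(\btheta)) = \max_{\bx} \{ \nabla_{\bx} f_i(\bhx_i, \btheta)^\tpose (\bhx_i - \bx) \;|\; \bA_i \bx = \bb_i, \; \bx \in \set{C}_i \}$, which, treating $\nabla_{\bx} f_i(\bhx_i, \btheta)$ as a fixed cost vector $\bc_i := \nabla_{\bx} f_i(\bhx_i, \btheta)$, is a conic \emph{linear} program in $\bx$: $\bc_i^\tpose \bhx_i - \min_{\bx} \{ \bc_i^\tpose \bx \;|\; \bA_i \bx = \bb_i,\; \bx \in \set{C}_i \}$. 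First I would take the conic dual of the inner minimization: $\min \{ \bc_i^\tpose \bx \;|\; \bA_i\bx = \bb_i, \bx \in \set{C}_i \}$ has dual $\max\{ \bb_i^\tpose \blambda_i \;|\; \bc_i - \bA_i^\tpose \blambda_i \in \set{C}_i^* \}$ (using $\set{C}_i$ self-dual, or writing $\set{C}_i^*$ in general; the statement as written takes $\set{C}_i = \set{C}_i^*$). Strong conic duality holds precisely because of the Slater-type hypothesis in the theorem---the existence of $\btx \in \interior(\set{C}_i)$ with $\bA_i \btx = \bb_i$ is exactly the primal strict feasibility needed for zero duality gap and dual attainment.

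Having replaced the inner $\min$ by its dual $\max$, the $\max$ over $\blambda_i$ can be merged with the outer $\min$ over $\btheta$ (a $\min$ of a $\max$ where the inner maximizer is now free to be chosen, so it becomes a joint minimization): $\IOPd(\ell_\mathrm{VI}, \empiricaldist) = \min_{\btheta, \blambda} \frac{1}{N}\sum_i \left( \bc_i^\tpose \bhx_i - \bb_i^\tpose \blambda_i \right)$ subject to $\bc_i - \bA_i^\tpose \blambda_i \in \set{C}_i$ for all $i$ and $\btheta \in \bTheta$, where $\bc_i = \nabla_{\bx} f_i(\bhx_i, \btheta)$. The only remaining gap between this and the claimed formulation is the absolute value: I need to argue that at optimality (or without loss of generality) $\bc_i^\tpose \bhx_i - \bb_i^\tpose \blambda_i \geq 0$, so that $\bc_i^\tpose \bhx_i - \bb_i^\tpose \blambda_i = | \bc_i^\tpose \bhx_i - \bb_i^\tpose \blambda_i |$. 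This follows because for feasible $\blambda_i$, weak duality gives $\bb_i^\tpose \blambda_i \leq \min_{\bx \in \set{X}_i} \bc_i^\tpose \bx \leq \bc_i^\tpose \bhx_i$ whenever $\bhx_i$ is feasible for $\FOPCSF_i(\btheta)$; assumption (i), $\bA \bx = \bb, \bx \in \set{C}$ almost surely, guarantees $\bhx_i \in \set{X}_i$. Hence each summand is already nonnegative and inserting the absolute value changes nothing, matching the displayed objective. Finally, the convexity addendum: if $\nabla_{\bx} f_i(\bx, \btheta)$ is linear (affine) in $\btheta$, then $\bc_i = \nabla_{\bx} f_i(\bhx_i, \btheta)$ is affine in $\btheta$, so the conic membership constraint $\bc_i - \bA_i^\tpose \blambda_i \in \set{C}_i$ is a convex (affine-preimage-of-a-cone) constraint in $(\btheta, \blambda_i)$, and the objective $\sum_i |\bc_i^\tpose \bhx_i - \bb_i^\tpose \blambda_i|$ is a sum of absolute values of affine functions of $(\btheta, \blambda)$, hence convex; together with $\btheta \in \bTheta$ convex, the whole program is convex.

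I expect the main obstacle to be the careful justification of strong conic duality and dual attainment for the inner LP---one must confirm that the Slater condition in hypothesis (ii) of the theorem is the \emph{primal} strict feasibility condition relative to the minimization $\min\{\bc_i^\tpose \bx : \bA_i \bx = \bb_i, \bx \in \set{C}_i\}$, so that the dual maximum is attained and equals the primal minimum (a strictly feasible primal point gives a bounded, attained dual in conic programming; cf.\ \citet{aghassi2006solving}). A secondary subtlety is bookkeeping with dual cones: the excerpt writes $\set{C}_i$ for both primal and dual membership, implicitly using self-duality (true for $\field{R}^n_+$, second-order, and PSD cones); I would either adopt that convention explicitly or carry $\set{C}_i^*$ through and note the specialization. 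Everything else---merging the two optimizations, inserting the redundant absolute value, and the convexity check---is routine.
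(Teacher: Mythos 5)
Your proposal is correct and follows essentially the route the paper indicates: you dualize the inner conic linear program $\min\{\nabla_{\bx} f_i(\bhx_i,\btheta)^\tpose \bx \;|\; \bA_i\bx=\bb_i,\ \bx\in\set{C}_i\}$ under the stated Slater condition and absorb the resulting dual maximization into the outer minimization over $\btheta$, exactly as in the duality-based reformulation of \citet{bertsimas2015data}. Your two side remarks are also on point: the constraint should strictly involve the dual cone $\set{C}_i^*$ (the statement implicitly assumes self-duality), and the absolute value in the objective is redundant precisely because weak duality plus feasibility of $\bhx_i$ makes each summand nonnegative.
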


Theorem~\ref{thm:ivi_duality_solution} shows that the Inverse Variational Inequality problem can be solved via convex programming for forward optimization models with conic constraints.
This theorem holds for any of the forward models introduced in Section~\ref{sec:problem_def}. 
The Inverse Variational Inequality problem is attractive as it can be efficiently solved via convex programming for a large class of convex forward models, while also possessing theoretical guarantees on the quality of the inverse solution. 

\subsection{Violating the KKT conditions}

\citet{keshavarz2011imputing} consider $\FOPCVX(\btheta)$ and propose loss functions that describe the degree to which each observed decision violates the KKT conditions. 
Recall that for a decision to be optimal for $\FOPCVX(\btheta)$ there must exist a dual variable such that the decision and dual satisfy a stationarity and a complementary slackness condition. We define two loss terms below that measure the degree to which a primal and dual solution pair $(\bx, \blambda)$ violate these conditions:
\begin{align*}
    \ell_\mathrm{st}(\bx, \Xopt(\btheta), \blambda) &:= \left| \nabla_{\bx} f(\bx, \btheta) + \sum_{j=1}^m \lambda_{j} \nabla_{\bx} g_{j}(\bx, \btheta) \right| \\
    \ell_\mathrm{cs}(\bx, \Xopt(\btheta), \blambda) &:= \norm{ \left(\lambda_1 g_1(\bx, \btheta), \lambda_2 g_2(\bx, \btheta), \ldots, \lambda_m g_m(\bx, \btheta) \right) }_p.
\end{align*} 
Given $(\bhx, \Xopt(\btheta))$, the KKT loss function finds a dual vector to minimize the sum of these residuals 
\begin{align*}
    \ell_{\mathrm{KKT}}\left(\bhx, \Xopt(\btheta) \right) := \min_{\blambda \geq \bzero} \;\; \Big (  \ell_\mathrm{st}\left(\bhx, \Xopt(\btheta), \blambda\right) + \ell_\mathrm{cs}\left(\bhx, \Xopt(\btheta), \blambda \right) \Big).
\end{align*}

Let $\IOPd(\ell_\mathrm{KKT}, \empiricaldist)$ be the Inverse KKT problem. For a given $\bhx$, computing $\ell_\mathrm{KKT}(\bhx, \Xopt(\btheta))$ requires solving a convex optimization problem. Consequently, $\IOPd(\ell_\mathrm{KKT}, \empiricaldist)$ becomes a non-convex problem in $(\btheta, \blambda)$ due to $\ell_\mathrm{cs}(\bhx, \Xopt(\btheta), \blambda)$. However, if the goal is to estimate only the parameters of the objective, then the inverse problem can be solved with convex programming.
\begin{proposition}
    Consider $\FOPCVX(\btheta)$. If $\bg_i(\bx, \btheta) = \bg_i(\bx)$ and $f(\bx, \btheta)$ is convex in $\btheta$, then $\IOPd(\ell_\mathrm{KKT}, \empiricaldist)$ is a convex optimization problem.
\end{proposition}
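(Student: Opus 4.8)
The plan is to show that, under the stated hypotheses, the Inverse KKT problem $\IOPd(\ell_\mathrm{KKT}, \empiricaldist)$ can be written as a single joint minimization over $\btheta$ and auxiliary dual variables $\blambda_i$ whose objective and constraints are convex in these decision variables. First I would expand the definition: $\IOPd(\ell_\mathrm{KKT}, \empiricaldist) = \min_{\btheta \in \bTheta} \frac{1}{N}\sum_{i=1}^N \ell_\mathrm{KKT}(\bhx_i, \Xopt_i(\btheta))$, and since each $\ell_\mathrm{KKT}(\bhx_i, \Xopt_i(\btheta))$ is itself an infimum over $\blambda_i \geq \bzero$, the two nested minimizations collapse into one joint minimization over $(\btheta, \blambda_1, \dots, \blambda_N)$ with $\blambda_i \geq \bzero$. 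So it suffices to show the summand $\ell_\mathrm{st}(\bhx_i, \Xopt_i(\btheta), \blambda_i) + \ell_\mathrm{cs}(\bhx_i, \Xopt_i(\btheta), \blambda_i)$ is jointly convex in $(\btheta, \blambda_i)$.

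The key simplification comes from the hypothesis $\bg_i(\bx, \btheta) = \bg_i(\bx)$: the constraint functions do not depend on $\btheta$ at all. Evaluated at the fixed data point $\bhx_i$, the quantities $g_j(\bhx_i)$ and $\nabla_\bx g_j(\bhx_i)$ are therefore constants. This makes the complementary-slackness residual $\ell_\mathrm{cs}(\bhx_i, \Xopt_i(\btheta), \blambda_i) = \| (\lambda_{i,1} g_1(\bhx_i), \dots, \lambda_{i,m} g_m(\bhx_i)) \|_p$ a norm of a linear function of $\blambda_i$ (with no dependence on $\btheta$), hence convex in $\blambda_i$ — this is exactly where the general non-convexity (bilinearity in $\lambda_j$ and $g_j(\bx,\btheta)$) is killed. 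For the stationarity residual, $\nabla_\bx f(\bhx_i, \btheta)$ is a function of $\btheta$ only, and the hypothesis that $f(\bx,\btheta)$ is convex in $\btheta$ — combined with the standing assumption that $f$ is differentiable — should let me argue that $\nabla_\bx f(\bhx_i, \btheta)$ behaves affinely enough, or at least that $|\nabla_\bx f(\bhx_i, \btheta) + \sum_j \lambda_{i,j} \nabla_\bx g_j(\bhx_i)|$ is convex in $(\btheta, \blambda_i)$; the term $\sum_j \lambda_{i,j}\nabla_\bx g_j(\bhx_i)$ is linear in $\blambda_i$, so the inner expression is a sum of a ($\btheta$-dependent) term and a linear-in-$\blambda_i$ term, and applying a norm to it preserves convexity provided the $\btheta$-dependent part is affine (or each coordinate is convex with matching sign).

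The main obstacle I anticipate is pinning down precisely in what sense $\nabla_\bx f(\bhx_i, \btheta)$ is convex/affine in $\btheta$: convexity of $f$ in $\btheta$ does not by itself make $\nabla_\bx f$ convex in $\btheta$ coordinatewise, so I expect the intended reading is that for the standard objective parametrizations of Section~\ref{sec:problem_def} (linear $f = \btheta^\tpose \bx$, quadratic, convex-separable bases $f = \sum_b \theta_b f^{(b)}(\bx)$), the gradient $\nabla_\bx f(\bhx_i, \btheta)$ is in fact \emph{affine} in $\btheta$ — mirroring the ``$\nabla_\bx f$ affine in $\btheta$'' condition already invoked in Lemma~\ref{lem:iyengar_inverse_conic}. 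I would make that the operative assumption (or note it follows from the convex-in-$\btheta$ differentiable structure used throughout), conclude that each residual is a norm composed with an affine map of $(\btheta, \blambda_i)$ and hence convex, sum over $i$ (convexity preserved), intersect with the convex set $\bTheta$ and the orthant constraints $\blambda_i \geq \bzero$, and thereby obtain a convex program. The remaining steps — verifying the epigraph reformulation of the absolute values and norms is standard — are routine.
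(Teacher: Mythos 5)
Your proposal is correct and takes essentially the same route as the paper, which states this proposition without a formal proof but whose preceding discussion contains exactly your key observation: the only source of non-convexity is the bilinear terms $\lambda_j g_j(\bhx, \btheta)$ in $\ell_\mathrm{cs}$, and once $\bg(\bx,\btheta)=\bg(\bx)$ these degenerate into a norm of a linear map of $\blambda$, while the nested $\min_{\blambda\geq\bzero}$ collapses into the outer minimization. Your caveat about the stationarity residual is also well taken: ``convex in $\btheta$'' is not literally sufficient, and the operative condition---satisfied by all three objective parametrizations of Section~\ref{sec:problem_def} and already invoked in Lemma~\ref{lem:iyengar_inverse_conic}---is that $\nabla_{\bx} f(\bhx,\btheta)$ be affine in $\btheta$, so that each residual is a norm composed with an affine map of $(\btheta,\blambda_i)$.
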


\subsection{Alternatives to empirical risk minimization}

So far, we have explored inverse optimization methods that minimize the expectation of a loss function over an empirical data distribution. We may also minimize alternative risk measures. For any probability distribution $\field{P}$, let $\rho^\field{P}(\cdot)$ denote a general risk measure~\citep{ruszczynski2006optimization, rockafellar2002conditional}, and consider 
\begin{align*}
    \IOPdr(\ell, \field{P}, \rho) := \min_{\btheta \in \bTheta} \;\; \rho^\field{P}\left( \ell\left( \bx, \Xopt(\btheta) \right) \right). 
\end{align*}
Setting $\rho^\field{P}(\cdot) = \field{E}_\field{P}[\cdot]$ recovers all of the previous models discussed in this section. 

\subsubsection{Quantile inverse optimization.}

Empirical risk minimization yields estimators that may be sensitive to outliers~\citep{hastie2009elements}. 
Ideally, a small perturbation in $\field{P}_N$ should not result in a large perturbation of the estimated parameter $\btheta^*$. 
For example in linear regression, squared-error loss penalizes larger errors and invites sensitivity, motivating alternative approaches such as quantile regression \citep{koenker2001quantile}. 
To reduce sensitivity in inverse optimization,~\citet{shahmoradi2021quantile} propose the Value-at-Risk (VaR) or $\chi$-quantile risk function
\begin{align*}
    \VaR_\chi(\ell, \field{P}) &:= \inf_{\tau \geq 0} \left\{ \tau \;\Big|\; \field{P}\left\{ \ell(\bx, \Xopt(\btheta)) \leq \tau \right\} \geq \chi \right\},  
\end{align*}
where $\chi \in [0, 1]$ is a percentile parameter that we choose. Depending on the predetermined value of $\chi$, the Inverse VaR problem $\IOPdr(\ell, \empiricaldist, \VaR)$ yields parameter estimates $\btheta^*$ for which the error has a bounded sensitivity with respect to perturbations in the data set.

Directly minimizing the $\chi$-quantile risk in $\IOPdr(\ell, \empiricaldist, \VaR)$ is a challenging optimization problem since the Value-at-Risk is discrete. Given a data set, the Value-at-Risk is defined with indicator functions, $\inf_\tau \{ \tau \;|\; \sum_{i=1}^N \Ind \{ \ell(\bhx_i, \Xopt_i(\btheta)) \leq \tau \} \geq \chi \}$. 
However, if we assume that given $\FOP_i(\btheta)$, the loss is bounded above by some value $M$, then the Inverse VaR problem becomes 
\begin{align*}
    \min_{\btheta, \tau, \bpi} \quad & \tau \\
    \st\quad    & \ell \left( \bhx_i, \Xopt_i(\btheta) \right) \leq \tau + M (1 - \pi_i), \quad \forall i \in \{1, \ldots, N\} \\
                & \sum_{i=1}^N \pi_i \geq N\cdot\chi  \\
                & \btheta \in \bTheta, \quad \bpi \in \{0, 1\}^N.
\end{align*}

The difficulty in solving the Inverse VaR problem depends on the choice of $\ell(\bx, \Xopt(\btheta))$.~\citet{shahmoradi2021quantile} explore linear forward models $\FOPL(\btheta)$ and the Minimum Distance loss $\ell_\mathrm{D}(\bx, \Xopt(\btheta))$ for general $p$-norms. Here, the resulting problem is a mixed integer conic program
\begin{align}\label{eq:inverse_quantile_micp}
\begin{split}
    \min_{\btheta, \bx_i, \blambda_i, \bpi_i} \quad & \tau \\
    \st \qquad   & \bA_i^\tpose \blambda_i = \btheta, \quad \blambda_i \geq \bzero , \quad \forall i \in \{1, \dots, N\} \\
                & \btheta^\tpose \bx_i = \bb_i^\tpose \blambda_i , \quad \forall i \in \{1, \dots, N\} \\
                & \bA_i \bx_i \geq \bb_i,  \quad \forall i \in \{1, \dots, N\} \\
                & \norm{\bhx_i - \bx_i}_p \leq \tau + M (1-\pi_i) , \quad \forall i \in \{1, \dots, N\} \\
                & \sum_{i=1}^N \pi_i \geq N \cdot \chi  \\
                & \btheta \in \bTheta, \quad \bpi \in \{0, 1\}^N,
\end{split}
\end{align}
which becomes a mixed integer linear program for $p=1$ or $p=\infty$.

Rather than directly solving Problem~\eqref{eq:inverse_quantile_micp},~\citet{shahmoradi2021quantile} fix a target $\chi$-quantile loss $\tau = \hat\tau$ and search for a feasible $\btheta \in \bTheta$ that can achieve this target. Although the original Problem~\eqref{eq:inverse_quantile_micp} is NP-hard, fixing $\tau$ can recast the problem to a bi-clique problem for which there exists effective solution algorithms and heuristics~\citep{dawande2001bipartite}. Second, fixing the Inverse VaR loss allows for introducing additional alternative objective functions for the inverse problem. In particular,~\citet{shahmoradi2021quantile} explore several statistical properties of forward model stability that they show can be satisfied by fixing $\tau$ and minimizing an alternate objective.

\subsubsection{Distributionally robust inverse optimization.}

Generalizing $\IOPdr(\ell, \field{P}, \rho)$, \citet{esfahani2018data} propose a distributionally robust inverse optimization problem where the ambiguity set controls a worst-case data distribution from the empirical $\empiricaldist$.

For any two distributions $\field{P}$ and $\field{Q}$, let
\begin{align*}
    W(\field{P}, \field{Q}) := \inf_{\bPi} \left\{ \field{E}_{\bPi} \left[ \norm{\bx - \bx'}_2 \right] \;\Bigg|\;
    \begin{array}{ll} \displaystyle
        \text{$\bPi$ is a joint distribution of $\bx, \bx'$} \\
        \text{with marginals $\field{P}$ and $\field{Q}$, respectively}
    \end{array}
    \right\}
\end{align*}
be the Wasserstein distance between them. This distance is well-defined for both continuous and discrete distributions, meaning we can compute Wasserstein distances with respect to $\empiricaldist$. 
Consider a ball-uncertainty set $\set{B}(\empiricaldist, \epsilon) := \{ \field{Q} \;|\; W(\field{Q}, \empiricaldist) \leq \epsilon \}$ of size $\epsilon > 0$ around the empirical distribution and define the Distributionally Robust Inverse problem
\begin{align*}
    \IOPddro(\ell, \empiricaldist, \rho, \epsilon) := \min_{\btheta} \sup_{\field{Q}} \quad & \rho^\field{Q}\left(\ell\left(\bx, \Xopt(\btheta)\right)\right) \\
    \st \quad   & \field{Q} \in \set{B}(\empiricaldist, \epsilon) \\
                & \btheta \in \bTheta.
\end{align*}

This problem naturally provides an out-of-sample guarantee on the true risk. 
%
\begin{proposition}[\citet{esfahani2018data}]\label{propn:dro_io_generalization}
    Consider $\FOPCVX_i(\btheta)$ instances that are defined by known input parameters $\bhu_i$. Assume that there exists $\alpha > 1$ for which $A_\alpha := \field{E}_\field{P}[\exp(\norm{(\bx, \bu)}^\alpha)] < \infty$. Let $\btheta^*$ be an optimal parameter estimate obtained by solving $\IOPddro(\ell, \empiricaldist, \rho, \epsilon)$ and let $z^*_\mathrm{DRO}$ be the optimal value. Then, there exists $\beta(\epsilon, A_\alpha, N)$ for which
    \begin{align*}
        \field{P} \left\{ \rho^\field{P}\left(\ell\left(\bx, \Xopt(\btheta)\right)\right) \leq z^*_\mathrm{DRO} \right\} \geq 1 - \beta(\epsilon, A_\alpha, N).
    \end{align*}
\end{proposition}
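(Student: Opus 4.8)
The plan is to reduce the claimed out-of-sample guarantee to a single measure-concentration statement, namely that the data-generating distribution $\field{P}$ lies inside the Wasserstein ambiguity set $\set{B}(\empiricaldist, \epsilon)$. The argument has two ingredients: a trivial structural step and the (nontrivial) concentration bound. For the structural step, read the estimate in the proposition with $\btheta$ replaced by the optimizer $\btheta^*$ of $\IOPddro(\ell, \empiricaldist, \rho, \epsilon)$, so that $z^*_\mathrm{DRO} = \sup_{\field{Q} \in \set{B}(\empiricaldist, \epsilon)} \rho^\field{Q}\!\left(\ell(\bx, \Xopt(\btheta^*))\right)$. Whenever $W(\empiricaldist, \field{P}) \le \epsilon$, the true distribution $\field{P}$ is one of the distributions over which this supremum is taken, hence $\rho^\field{P}\!\left(\ell(\bx, \Xopt(\btheta^*))\right) \le z^*_\mathrm{DRO}$. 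This yields the event inclusion
\[
\left\{ W(\empiricaldist, \field{P}) \le \epsilon \right\} \;\subseteq\; \left\{ \rho^\field{P}\!\left(\ell(\bx, \Xopt(\btheta^*))\right) \le z^*_\mathrm{DRO} \right\},
\]
where both events are measured with respect to the $N$-fold product $\field{P}^N$ governing the sample (note that $\btheta^*$ and $z^*_\mathrm{DRO}$ are functions of the sample and hence random). Taking complements, it therefore suffices to produce $\beta(\epsilon, A_\alpha, N)$ with $\field{P}^N\{ W(\empiricaldist, \field{P}) > \epsilon \} \le \beta(\epsilon, A_\alpha, N)$.

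The remaining and substantive work is the concentration inequality for the empirical measure in Wasserstein distance. Here I would invoke the light-tail hypothesis $A_\alpha = \field{E}_\field{P}[\exp(\|(\bx, \bu)\|^\alpha)] < \infty$ for some $\alpha > 1$, which is precisely the condition under which the rate-of-convergence estimates for empirical measures in Wasserstein distance (of Fournier--Guillin type, as used by \citet{esfahani2018data}) apply, and which yield an explicit tail bound $\field{P}^N\{ W(\empiricaldist, \field{P}) > \epsilon \} \le \beta(\epsilon, A_\alpha, N)$ with $\beta$ depending on $A_\alpha$, the dimension of $(\bx, \bu)$, $\epsilon$ and $N$, and decaying to zero as $N \to \infty$. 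Defining $\beta$ to be exactly this bound and chaining it with the event inclusion above gives $\field{P}^N\left\{ \rho^\field{P}\!\left(\ell(\bx, \Xopt(\btheta^*))\right) \le z^*_\mathrm{DRO} \right\} \ge 1 - \beta(\epsilon, A_\alpha, N)$, which is the claim. Along the way I would record the standing regularity conditions that make the statement meaningful: $\ell(\cdot, \Xopt(\btheta))$ should be upper semicontinuous and suitably integrable so that $\rho^\field{Q}\!\left(\ell(\bx, \Xopt(\btheta))\right)$ is well-defined for every $\field{Q} \in \set{B}(\empiricaldist, \epsilon)$, and measurability of the argmin $\btheta^*$ as a function of the sample (so that the displayed event is measurable) should be secured by a measurable-selection argument or by passing to a near-optimal $\btheta^*$.

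The main obstacle is the Wasserstein concentration inequality: proved from scratch it requires the transportation-cost / covering-number machinery behind the Fournier--Guillin estimates, and obtaining the correct explicit dependence of $\beta$ on $A_\alpha$, the dimension, $\epsilon$ and $N$ is delicate. Everything else is the short ``if the truth is in the ambiguity set, the robust value dominates'' step. Since the concentration result is a known, citable ingredient, the proof itself is essentially the two-line event-inclusion argument sketched above; the bulk of the effort in a fully self-contained treatment would be reproducing that measure-concentration bound and verifying the regularity conditions under which the risk measure $\rho^\field{Q}$ behaves well on the Wasserstein ball.
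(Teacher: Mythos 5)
Your proposal is correct and matches the standard argument: the survey states this proposition without proof, deferring to \citet{esfahani2018data}, and your two-step decomposition --- the event inclusion ``if $W(\empiricaldist,\field{P})\le\epsilon$ then $\field{P}$ lies in the ambiguity set, so the robust optimal value dominates $\rho^{\field{P}}$ at $\btheta^*$,'' chained with the Fournier--Guillin Wasserstein concentration bound under the light-tail condition $A_\alpha<\infty$ --- is precisely how that reference establishes the result. Your added remarks on measurability of $\btheta^*$ and on the risk measure being well-defined on the Wasserstein ball are appropriate caveats rather than gaps.
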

We refer to~\citet{esfahani2018data} for details on computing $\beta$. Note that when $\rho^\field{P}(\cdot) = \field{E}_\field{P}[\cdot]$, the out-of-sample guarantee is similar to a generalization bound (see Theorem~\ref{thm:ivi_generalization}) in terms of bounding the true risk. 
However, Proposition~\ref{propn:dro_io_generalization} holds independent of the risk measure or the loss function used to evaluate the inverse problem, meaning that the distributionally robust framework applies to any of the inverse optimization variants that were previously introduced. 
Furthermore, this guarantee directly compares the empirical and true risk with no additional terms.

\citet{esfahani2018data} explore the Conditional-Value-at-Risk (CVaR) at level $\alpha > 0$ as their risk measure for distributionally robust inverse optimization
\begin{align*}
    \CVaR(\ell, \alpha, \field{P}) &:=  \inf_\tau \tau + \frac{1}{\alpha} \field{E}_\field{P} \left[ \max\left\{ \ell(\bx, \Xopt(\btheta)) - \tau , \; 0 \right\} \right]. 
\end{align*}
CVaR generalizes the empirical risk as a special case when $\alpha = 1$ and further generalizes the essential supremum risk measure for $\alpha \downarrow 0$. 
However, the distributionally robust inverse problem $\IOPddro(\ell, \field{P}_N, \CVaR, \epsilon)$ is NP-hard for many forward model classes. For linear forward models $\FOPL_i(\btheta)$ and the sub-optimality loss function $\ell_\mathrm{ASO}(\bx, \Xopt(\btheta))$, \citet{esfahani2018data} use techniques from distributionally robust optimization~\citep{esfahani2018dro} to show that $\IOPddro(\ell_\mathrm{ASO}, \field{P}_N, \text{CVaR}, \epsilon)$ can be re-formulated as a large conic optimization problem.

\cite{dong2020wasserstein} build on the distributionally robust framework of~\citet{esfahani2018data} for multi-objective forward optimization models
\begin{align*}
    \FOPCVX\text{--}\prob{MO}((\btheta, \bphi)) := \min_{\bx} \left\{ \sum_{b=1}^{B} \phi_b f^{(b)}(\bx, \btheta) \;\Bigg|\; \bg(\bx, \btheta) \leq \bzero \right\}.
\end{align*}
$\FOPCVX\text{--}\prob{MO}((\btheta, \bphi))$ reduces to the conventional convex forward model $\FOPCVX(\btheta, \bphi)$ when considering only a single objective. \citet{dong2020wasserstein} use the expected value risk function $\rho^\field{P}(\cdot) = \field{E}_\field{P}[\cdot]$ and distance minimization loss $\ell_\mathrm{D}$, meaning that their inverse problem is a multi-objective distributionally robust generalization of the Inverse Distance problem of~\citet{aswani2018inverse}. Since the inverse problem of~\citet{aswani2018inverse} itself is NP-hard, the distributionally robust formulation is even more difficult to solve. Using duality,~\citet{dong2020wasserstein} show that the distributionally robust problem can be re-formulated into a semi-infinite optimization problem under certain conditions, which can be solved using constraint generation.

\subsection{Inverse optimization with non-parametric forward models}\label{subsec:non-param}

\citet{bertsimas2015data} consider non-parametric kernel functions to model a forward optimization objective.
A kernel $K: \field{R}^n \times \field{R}^n \rightarrow \field{R}$ is a similarity function that can compare a given point $\bx$ with another $\bhx_i$ in the data set (see~\citet{scholkopf2001learning} and~\citet{hastie2009elements} for reviews from a machine learning perspective).
In inverse optimization, a non-parametric objective can be modeled indirectly by representing the gradient $\nabla_{\bx} f(\bx)$ as a linear combination of kernels with respect to the observed decisions:
\begin{align}\label{eq:kernel_forward_objective}
    \nabla_{\bx} f(\bx) = \btheta \bK(\bx) := 
    \begin{bmatrix}
        \theta_{1,1} & \theta_{1, 2} & \cdots & \theta_{1, N} \\
        \theta_{2,1} & \theta_{2, 2} & \cdots & \theta_{2, N} \\
        \vdots     & \vdots      & \ddots & \vdots \\
        \theta_{n,1} & \theta_{n, 2} & \cdots & \theta_{n, N}
    \end{bmatrix}
    \begin{bmatrix}
        K(\bhx_1, \bx) \\
        K(\bhx_2, \bx) \\
        \vdots \\
        K(\bhx_N, \bx)
    \end{bmatrix}.
\end{align}
Non-parametric representations do not permit a closed form of objective $f(\bx)$ from~\eqref{eq:kernel_forward_objective} and can reduce forward model interpretability at the trade-off of flexibility by being able to characterize the gradient of an arbitrary convex function. Thus, kernel-based forward models are valuable when we do not have a clear structural understanding of an agent's decision-making process.

While the inverse optimization literature has primarily explored kernel representations of forward optimization objectives for Inverse Variational Inequality problems, kernel representations can be used in conjunction with any loss functions that depend on computing gradients of a forward objective. Furthermore, note that the kernel representation is linear in the kernel weights, meaning that the use of a kernel function typically will not increase the computational complexity of the inverse optimization problem. Finally, inverse optimization problems for estimating constraints often require gradient representations of the constraint functions (e.g., Inverse KKT). Consequently, non-parametric techniques can also be used to model constraints in such settings.


Depending on the choice of kernel function, $\btheta \bK(\bx)$ can represent function spaces that are arbitrarily complex~\citep{hastie2009elements}, meaning that this non-parametric technique may overfit to the training decision data.~\citet{bertsimas2015data} recommend including a regularization penalty in the inverse optimization objective. To demonstrate, we consider $\FOPCSF(\btheta)$ and provide the non-parametric formulation of $\IOPd(\ell_{\mathrm{VI}}, \empiricaldist)$ below: 
\begin{align*}
    \min_{\bepsilon, \btheta, \blambda} \quad  & \sum_{i=1}^N \norm{\epsilon^{\mathrm{vi}}_i}_1 +  \kappa \sum_{k=1}^n \sum_{i=1}^N \sum_{i'=1}^N \theta_{k, i} \theta_{k, i'} K(\bhx_i, \bhx_{i'})    \\
    \st\quad    & \epsilon^{\mathrm{vi}}_i \geq \left( \btheta \bK(\bhx_i) \right)^\tpose \bhx_i - \bb_i^\tpose \blambda_i,   \quad \forall i \in \{1, \dots, N\}  \\
                &  \btheta \bK(\bhx_i) - \bA_{i}^\tpose \blambda_i \in \set{C}_i, \quad \forall i \in \{1, \dots, N\} \\
                & \btheta \in \bTheta.
\end{align*}
%
The second term of the objective does not occur in the parametric version of $\IOPd(\ell_{\mathrm{VI}}, \empiricaldist)$. This regularization term on model complexity is weighted by a parameter $\kappa > 0$ and can be interpreted as a penalty on the ``smoothness'' of the kernel representation \citep{girosi1993priors}. 

\begin{table}[!t]
    \resizebox{0.99\linewidth}{!}{%
    \begin{tabular}{p{0.28\linewidth}  p{0.72\linewidth}}
    \toprule
         \textbf{Risk measure} & \textbf{Summary} \\
         \midrule\textbf{Expected value}  & $\displaystyle\rho^\field{P}(\ell) := \field{E}_\field{P} \left[ \ell\left(\bx, \Xopt(\btheta) \right) \right]$   \\
            & \textbf{Best used for.~} Objectives and constraints for convex forward models. \\
            & \textbf{Statistical properties.~} Refer to Table~\ref{tab:summary_of_dd_losses}. \\
            & \textbf{Solution methods.~} Refer to Table~\ref{tab:summary_of_dd_losses}. \\
         \midrule\textbf{VaR}  & 
         $\displaystyle\rho^\field{P}(\ell, \chi) := \inf_\tau \left\{ \tau \;\Big|\; \field{P}\left\{ \ell(\bx, \Xopt(\btheta)) \leq \tau \right\} \geq \chi \right\}$   \\
           \citet{shahmoradi2021quantile} & \textbf{Best used for.~} Objectives for linear models. \\
            & \textbf{Statistical properties.~} Stability.\\
            & \textbf{Solution methods.~} Integer program reformulation. \\ 
         \midrule\textbf{CVaR} & 
         $\displaystyle\rho^\field{P}(\ell, \alpha) :=  \inf_\tau \tau + \frac{1}{\alpha} \field{E}_\field{P} \left[ \max\left\{ \ell(\bx, \Xopt(\btheta) - \tau , \; 0 \right\} \right]$ \\
            \citet{esfahani2018data}  & \textbf{Best used for.~} Objectives for linear and quadratic models. \\
            & \textbf{Statistical properties.~} Out-of-sample guarantee from distributionally robust optimization. \\
            & \textbf{Solution methods.~} Conic program reformulation for linear and some quadratic models. \\ 
         \bottomrule
    \end{tabular}
    }
    \caption{Different risk measures that can be minimized in data-driven inverse optimization. Each risk measure is accompanied by the formula, the forward model classes where it can be easily minimized, statistical properties that it satisfies, and the general solution method.}
    \label{tab:summary_of_dd_risks}
\end{table}

\begin{table}[!t]
    \centering
    \resizebox{0.95\linewidth}{!}{%
    \begin{tabular}{p{0.22\linewidth}  p{0.78\linewidth}}
    \toprule
         \textbf{Risk measure} & \textbf{Summary} \\
         \midrule\textbf{Distance} 
            & $\displaystyle\ell_\mathrm{D}\left((\bhx, \Xopt(\btheta)\right) := \min_{\bx \in \Xfeas(\btheta)} \norm{\bx - \bhx}_2$ \\
            & \textbf{Best used for.~} Objectives and constraints for convex models. \\
            & \textbf{Statistical properties.} Consistency~\citep{aswani2018inverse}.  \\
            & \textbf{Solution methods.~} Enumeration algorithm for general models; Kernel regression-based algorithm for strictly convex models~\citep{aswani2018inverse}; convex program when all models have the same feasible set~\citep{chan2018inverse, chan2018multiple}. \\
         \midrule\textbf{Absolute \qquad \quad Sub-optimality} 
            & $\displaystyle\ell_\mathrm{ASO}\left(\bhx, \Xopt(\btheta)\right) := \left| f(\bhx, \btheta) - \min_{\bx \in \Xfeas(\btheta)} f(\bx, \btheta) \right|$ \\
            & \textbf{Best used for.~} Objectives and constraints for linear models. \\
            & \textbf{Statistical properties.~} Generalization bound for linear models due to equivalence with VI. \\
            & \textbf{Solution methods.~} Linear program reformulation~\citep{chan2014generalized, chan2018inverse, chan2018multiple, esfahani2018data, ghobadi2021inferring}. \\
         \midrule\textbf{Relative \qquad \quad Sub-optimality} 
            & $\displaystyle\ell_\mathrm{RSO}\left(\bhx, \Xopt(\btheta)\right) := \left| \frac{f(\bhx, \btheta)}{\min_{\bx \in \Xfeas(\btheta)} f(\bx, \btheta)} - 1 \right|$ \\
            & \textbf{Best used for.~} Objectives for linear models. \\
            & \textbf{Statistical properties.~} N/A. \\
            & \textbf{Solution methods.~} Linear program reformulation~\citep{troutt2005linear, ref:troutt_ejor08, ref:troutt_ms06, chan2014generalized, chan2018inverse, chan2018multiple}. \\
         \midrule\textbf{Variational Inequality} 
            & $\displaystyle\ell_{\mathrm{VI}}\left( \bhx, \Xopt(\btheta) \right) := \max_{\bx \in \Xfeas(\btheta)} \;\;  \nabla_{\bx} f(\bhx, \btheta)^\tpose\left( \bhx - \bx \right)$ \\
            & \textbf{Best used for.~} Objectives for conic models. \\
            & \textbf{Statistical properties.~} Generalization bound~\citep{bertsimas2015data}. \\
            & \textbf{Solution methods.~} Convex reformulation~\citep{bertsimas2015data}. \\
         \midrule\textbf{KKT Conditions} 
            & $\displaystyle\ell_{\mathrm{KKT}}\left(\bhx, \Xopt(\btheta) \right) := \min_{\blambda \geq \bzero} \;\;  \ell_\mathrm{st}\left(\bhx, \Xopt(\btheta), \blambda\right) + \ell_\mathrm{pf}\left(\bhx, \Xopt(\btheta), \blambda \right)$ \\
            & \textbf{Best used for.~} Objectives for convex models. \\
            & \textbf{Statistical properties.~} N/A. \\
            & \textbf{Solution methods.~} Convex reformulation~\citep{keshavarz2011imputing}. \\
         \bottomrule
    \end{tabular}
    }
    \caption{Different loss functions that can be minimized in data-driven inverse optimization. Each loss function is accompanied by the formula, the forward model classes where the empirical risk measure can be easily minimized, statistical properties that it satisfies, and the general solution method.    }
    \label{tab:summary_of_dd_losses}
\end{table}

\subsection{Summary}

The data-driven inverse literature proposes risk measures and loss functions that are applicable for any convex forward optimization model. We summarize advantages of the different methods and discuss general rules-of-thumb to consider when selecting a model. The different methods contrast along two dimensions: (i) whether they provide useful statistical properties as an estimator of forward agent behavior and (ii) whether they admit efficient solution algorithms.

Table~\ref{tab:summary_of_dd_risks} summarizes common risk measures in the literature and highlights appropriate use cases.
While the empirical expected risk includes methods with statistical properties and fast solution algorithms, the quantile (VaR) and CVaR risk measures provide additional stability and generalization properties.
These may be useful when the inverse optimizer has limited data sets but must generate estimates that generalize to new instance or are robust to decision data sensitivity.

Table~\ref{tab:summary_of_dd_losses} summarizes the different loss functions under the empirical expected risk. 
All of these losses can be easily minimized for linear forward optimization models, with the Inverse Distance, KKT, and Variational Inequality losses being also efficient for conic models. 
Inverse Distance further guarantees a statistically consistent estimator, while Inverse Absolute Sub-optimality and Inverse Variational Inequality have generalization bounds; the former guarantees convergence to a ``true'' parameter given large data sets, while the latter bounds error when given finite data sets.

\section{Related Learning Paradigms}\label{sec:related_model_paradigms}

In this section, we overview two related streams to the inverse optimization literature. The first considers the situation where the decision data arrives sequentially in an online manner. This topic is relatively nascent but growing. The second is inverse reinforcement learning, which has evolved in parallel with inverse optimization into a divergent class of methods and applications.

\subsection{Inverse optimization through online learning}\label{subsec:online_learning}

Rather than using a fixed set of decisions to estimate a parameter in one shot, Inverse Online Learning (IOL) operates over multiple rounds where in each round $t \in \{1, 2, \dots, T\}$, the inverse optimizer observes a new decision $\bhx_t \in \set{X}_t$ and then constructs a parameter estimate $\btheta_{t}$ by updating an existing estimate $\btheta_{t-1}$.
Algorithm \ref{algo:Online_learning} outlines the general structure of an IOL problem.


\begin{algorithm}[t]
\caption{General inverse online learning framework}\label{algo:Online_learning}
\vspace{0.3cm}
\textbf{Input:} Data set $\{ \bhx_t, \Xfeas_t(\btheta) \}_{t = 1}^T$ arriving sequentially; Initial learning rate $\eta_1$; Initial estimate $\btheta_0$ \\
\textbf{Output:} Set of parameter estimates $\{\btheta_t\}_{t = 1}^T$ estimated sequentially
\begin{algorithmic}[1]
\For{$t = 1$ to $T$}
\State Update estimate $\btheta_{t}$ using $(\bhx_t, \Xfeas_t(\btheta_{t-1}), \eta_{t-1})$ 
\State Update learning rate $\eta_{t}$
\EndFor\\
\Return $\{\btheta_t\}_{t=1}^T$ 
\end{algorithmic}
\end{algorithm}

The core requirement of IOL algorithms is a computationally efficient update rule. Note that even if observing a sequential stream of decision data, we could still use traditional inverse optimization methods---which we refer to here as offline or ``batch'' mode---that use the entire data set to estimate a parameter. However, these approaches would require solving increasingly larger inverse problems from scratch in each round. In contrast, update rules improve the previous estimate by using only the most recent observed decision. In fact, the computational efficiency of the IOL algorithms make them efficient heuristics even for offline inverse optimization.

The computational efficiency of the update rules comes at a cost of the learning efficiency (i.e., estimation error) of the overall parameter. Thus, IOL methods require performance guarantees on this learning efficiency, represented by a regret function, which measures the difference in the cumulative loss from online learning versus the loss from a batch-level approach over the time horizon. For example, we may consider average regret 
\begin{align*}
    R\left(\{\btheta\}_{t=1}^T, \{\bhx_t\}_{t=1}^T \right) := \frac{1}{T} \Bigg( \, \, \underbrace{\sum_{t=1}^T \ell \left(\bhx_t, \Xopt_t(\btheta_t) \right)}_{\text{online performance}} \; - \ \underbrace{\min_{\tilde\btheta \in \bTheta} \sum_{t=1}^T \ell\left(\bhx_t, \Xopt_t(\tilde\btheta)\right)}_{\text{batch-level performance}} \, \, \Bigg),
\end{align*}
where $\ell(\bhx, \Xopt(\btheta))$ can be any of the loss functions considered in Section \ref{sec:data-driven_IO}. 
Then, the performance guarantee is derived as an upper bound of the rate of decrease of $R(\{\btheta\}_{t=1}^T, \{\bhx_t\}_{t=1}^T)$ as a function of the data sequence $\{\bhx_t\}_{t=1}^T$. This bound depends on the loss function and the update rule used. 

\subsubsection{Updates using a forward optimization oracle.}
\citet{barmann2020online} explore IOL by minimizing the Absolute Sub-optimality loss (see Section~\ref{subsec:suboptimality}). Consider the forward model
\begin{align}\label{eq:forward_model_online}
    \min_{\bx} \; \left\{\btheta^\top \bx \; \Big | \; \bx \in \mX_t \right\}
\end{align}
where the objective function is linear and unknown and the feasible set is an arbitrary bounded set. 
Furthermore, the authors assume access to a ``forward oracle'', such that whenever a new data point $(\bhx_{t}, \Xfeas_t)$ becomes available, an inverse optimizer can efficiently solve model \eqref{eq:forward_model_online} using the incumbent estimate $\btheta_{t-1}$ to generate an optimal solution $\bx^*_t \in \mX_t$. Then, the next estimate $\btheta_{t}$ is obtained with either of the two rules below, which are based on multiplicative weights update (MWU) \citep{arora2012multiplicative} and online gradient descent (OGD) \citep{zinkevich2003online} algorithms, respectively: 
%
\begin{align*}
     \prob{MWU}(\btheta_{t-1}, \bhx_t): \quad & \btheta_{t} = \btheta_{t-1} - \eta_{t} \, (\btheta_{t-1} \odot (\bx^*_t - \bhx_t))\\  
     \prob{OGD}(\btheta_{t-1}, \bhx_t): \quad & \btheta_{t} = \btheta_{t-1} - \eta_{t} \; (\bx^*_t - \bhx_t)
\end{align*}
Here, the symbol $\odot$ denotes element-wise vector multiplication. The specifics of the learning rate parameter $\eta_t$ can be found in \cite{barmann2020online}. The two update rules lead to slightly different performance guarantees, but both result in average regret converging at a rate of $\mO(1/\sqrt{T})$. However, the guarantees hold only under the assumption that the data is noise-free, i.e., all data points are generated exactly using a fixed $\btheta^*$ such that $\ell(\bhx_t, \Xopt_t(\btheta^*)) =  0$ for all $t$. This approach is extended by \citet{xinying2020online} with similar convergence results for certain classes of forward problems with non-linear objectives.

\subsubsection{Updates using an inverse optimization oracle.}\label{subsubsec:IOL_inverseoracle}

\citet{dong2018generalized} propose an update rule using Minimum Distance loss (see Section~\ref{sec:datadriven_distance}) for problems with convex bounded feasible regions and strictly convex objective functions. Their update rule requires the solution of a single-point inverse optimization problem to directly update their parameter estimates, i.e.,
\begin{align*}
    \btheta_{t} \in \argmin_{\btheta \in \Theta} \left\{ \norm{\btheta - \btheta_{t-1}}^2_2 + \eta_t \ell_\mathrm{D}(\bhx_t, \set{X}_t(\btheta)) \right\}.
\end{align*}
The authors also show that average regret converges at a rate of $\mO(1/\sqrt{T})$, but that this convergence rate holds even under noisy data, as long as some assumptions on the distribution of noise is met. 

\subsection{Inverse reinforcement learning}
\label{sec:irl_connections}

Inverse Reinforcement Learning (IRL) is the problem of estimating the reward function of a reinforcement learning problem~\citep{russell1998learning, ng2000algorithms}. 
Since reinforcement learning can be used for large-scale MDPs, the early IRL literature shares fundamental similarities to Inverse MDPs (see Section~\ref{subsec:MDP}). However, modern IRL methods draw from the machine learning literature (e.g., maximum likelihood estimation, gradient descent algorithms) more than inverse optimization~\citep{sutton2018reinforcement}. 
Nonetheless, the similarity of problems suggest that new data-driven inverse optimization methods may be obtained by adapting IRL techniques.
We briefly highlight the relationship between IRL and inverse MDPs, summarize early techniques, and sketch current directions; we refer to~\citet{arora2018survey} for a detailed survey.

A reinforcement learning problem is defined by the tuple $(\set{S}, \set{A}, p, \btheta, \gamma)$ comprising a state space, action set, transition probabilities, reward function, and discount factor. 
We deviate slightly from the notation of Section~\ref{subsec:MDP} and refer to specific rewards and value functions as $\theta(s, a)$ and $v(s)$, respectively.
We assume without loss of generality that there is an initial state $s_0$ from which all trajectories begin.
In IRL, we may observe a policy $\hat\bpi(s)$ from an agent, or instead only observe a data set of length $T$ state-action trajectories $\dataset = \{ \tau_i \}_{i=1}^N$ where $\tau_i := \langle (s_0, a^i_0), (s^i_1, a^i_1), \dots, (s^i_T, a^i_T) \rangle$. 
The goal is to estimate a reward vector $\btheta$ for which the observed policy or trajectories are optimal.

Early IRL methods observe a policy $\bhpi$ and solve an inverse problem with convex programming. 
For instance, Max-Margin methods estimate a reward function such that the actions taken by $\bhpi$ achieve higher expected rewards than any other actions~\citep{russell1998learning, ng2000algorithms, abbeel2004apprenticeship, ratliff2006maximum}.
Let
\begin{align*}
    q(s, a) := \theta(s, a) + \gamma \sum_{s' \in \set{S}} p(s' | s, a) v(s')
\end{align*}
denote the state-action $q$-function. Then, given a policy $\bhpi$, the Max-Margin loss function is
\begin{align*}
     \ell_\mathrm{MM}(\bhpi) := \sum_{s \in \set{S}} \left( q(s, \bhpi(s)) - \max_{a \in \set{A} \setminus \{\bhpi(s)\}} q(s, a) \right).
\end{align*}
The above loss computes the difference in $q$-values between the actions from an observed policy and the next best action. 
For MDPs where the reward only depends on the state, i.e., $\theta(s, a) = \theta(s)$ for all $a \in \set{A}$,~\citet{ng2000algorithms} note that this loss is a convex function of $\btheta$. They formulate a convex program where the estimated reward is constrained to ensure that the observed policy satisfies an optimality condition for the MDP. Below, we present a variant of the original formulation:
\begin{align}\label{eq:irl_max_margin}
    \begin{split}
        \max_{\btheta, v, q} \quad     & \ell_\mathrm{MM}(\bhpi) - \kappa \norm{\btheta}_1 \\
        \st \quad   & q(s, a) = \theta(s) + \gamma \sum_{s' \in \set{S}} p(s' | s, a) v(s'), \quad \forall s \in \set{S}, \forall a \in \set{A} \\
                    & v(s) \geq q(s, a), \quad \forall s \in \set{S}, \; a \in \set{A} \\
                    & v(s) = q(s, a'), \quad \forall s \in \set{S}, \; a' = \bhpi(s) \\
                    & \btheta \in \bTheta.  
    \end{split}
\end{align}
The constraints of Problem~\eqref{eq:irl_max_margin} are equivalent to~\eqref{eq:inverse_mdp2}--\eqref{eq:inverse_mdp4} except here written with the $q$-function.
Furthermore in addition to the Max-Margin loss,~\citet{ng2000algorithms} include a regularization term $-\kappa \norm{\btheta}_1$ that encourages ``simpler'' small-magnitude estimated rewards. 

Problem~\eqref{eq:irl_max_margin} can be typically too difficult to solve for practical applications where the state and action spaces are large and we only observe trajectories rather than policies and state transition probabilities. 
The Max-Margin literature proposes two resolutions to address these concerns. 
First, we may model the reward as a linear combination of fixed basis functions $\theta(s) = \sum_{b=1}^B \theta_b f^{(b)}(s)$; 
this can reduce the number of variables to estimate and further ensure linearity in the parameters identical to convex-separable bases in the inverse optimization literature.
Second, we may eschew designing a reward function itself and instead minimize the margin between the value function of the optimal policy $\bpi(\btheta)$ and the empirical expected state-action $q$-function value, i.e., 
\begin{align*}
    \min_{\btheta \in \bTheta} \quad & \ell \left( v^{\bpi(\btheta)}\left(s_0 \right) , \; \frac{1}{N} \sum_{i=1}^N \left( \theta\left(s_0\right) + \sum_{t=1}^T \gamma^t \theta\left(s_t^{(i)}\right) \right) \right)
\end{align*}
where $\{\langle (s_0, a^i_0), (s_1^i, a_1^i), \dots, s_T^i, a_T^i) \rangle \}_{i=1}^N$ is the data set of observed trajectories and $\ell(v_1, v_2)$ is a penalty function on the difference of values. The above problem is typically solved using iterative heuristics that compute the margin with respect to a set of candidate policies $\tilde\bpi$ in place of the optimal policy. Algorithm~\ref{algo:inverse_reinforcement_learning} highlights the general steps of a Max-Margin method.

\begin{algorithm}[t]
\caption{General max-margin inverse reinforcement learning framework}
\label{algo:inverse_reinforcement_learning}
\vspace{0.3cm}
\textbf{Input:} Data set of trajectories $\{\tau_i\}_{i=1}^N$\\
\textbf{Output:} Reward function estimate $\hat\theta(s, a) = \sum_{b=1}^B \hat\theta_b f^{(b)}(s, a)$
\begin{algorithmic}[1]
\State Initialize a reward estimate $\tilde\btheta$; Memory of rewards $\set{P} = \emptyset$
\While{Convergence criteria not met}
    \State Solve the reinforcement learning problem with $\tilde\btheta$ to obtain a candidate policy
    \begin{align*}
        \tilde\bpi \gets \text{RL}(\set{S}, \set{A}, p, \tilde\btheta, \gamma) 
    \end{align*}
    \State Update $\set{P} \gets \set{P} \cup \{ \tilde\bpi \}$ and solve an approximate Max-Margin IRL problem, e.g.,
    \begin{align*}
        \tilde\btheta \gets \argmin_{\btheta \in \bTheta} \quad & \sum_{\tilde\bpi \in \set{P}}  \ell \left( v^{\tilde\bpi}\left(s_0 \right) , \; \frac{1}{N} \sum_{i=1}^N \left( \theta\left(s_0\right) + \sum_{t=1}^T \gamma^t \theta\left(s_t^{(i)}\right) \right) \right)
    \end{align*}
\EndWhile \\
\Return Final reward estimate $\tilde\btheta$
\end{algorithmic}
\end{algorithm}

Note that the problem formulation of large-scale Max-Margin methods parallel the formulation of data-driven inverse optimization. In inverse optimization, inverse-feasibility becomes hard to satisfy with large decision data sets, necessitating data-driven loss functions that penalize the violation of optimality conditions. On the other hand, classical IRL is too large to model using optimality criteria, leading to loss functions that penalize the sub-optimality of the trajectories.

Recent IRL methods relate more closely with modern machine learning, e.g., Maximum Entropy~\citep{ziebart2008maximum, boularias2011relative}, Bayesian IRL~\citep{ramachandran2007bayesian, lopes2009active, levine2011nonlinear}, or supervised learning of $q$-values~\citep{taskar2005learning}. For example, the Maximum Entropy literature posits that in an MDP with stochastic state dynamics, the observed trajectories $\tau$ are drawn from an optimal policy and must have the highest likelihood over all other trajectories. 
This can recast IRL as a maximum likelihood problem $\max_{\theta} \; \field{E}_{\field{P}} \left[ \log p_{\theta}(\tau) \right]$ where $p_{\theta}(\tau) \propto p(s_0) \prod_{t=0}^T p(s_{t+1}|s_{t}, a_{t}) \exp(\gamma^{t} \theta(s_t, a_t))$ follows a Boltzmann distribution.

We conclude this section by highlighting a common weakness of both inverse optimization and IRL. Both problems face the risk of obtaining uninformative degenerate estimates. Note that for any MDP, the all-zero reward function $\theta(s, a) = 0$ will ensure that every policy (including an observed one) is optimal. In IRL, this necessitates well-designed objectives such as the Max-Margin. However,~\citet{ng1999policyinvariance} further show that the optimal policy for an MDP with reward function $\theta(s, a, s')$ is invariant to the transformation $\theta(s, a, s') + \gamma \Phi(s') - \Phi(s)$ for any \emph{arbitrary} function $\Phi: \set{S} \rightarrow \field{R}$. Effectively, if a policy is optimal, there are an infinite number of reward functions for which it is so. 
To mitigate this problem,~\citet{fu2017learning} propose an adversarial IRL framework that estimates `disentangled' rewards. 
Although not to the same extent, the scaling invariance property (see Section~\ref{subsubsec:data-driven_IO_rdg}) is a similar characteristic in inverse optimization. However, scaling invariance is often easily mitigated by a normalization constraint in inverse optimization.

\section{Applications in Design Problems} \label{sec:classic_IO_apps} 

We review the application of classical inverse optimization for the design of price incentives in bilevel games (Section~\ref{subsec:incentive_design}) and price mechanisms in cooperative games (Section~\ref{subsec:mechanism_design}).
These two problems encompass many application areas and share a common theme of using prices to \emph{induce} behavioral change, under the assumption that the original decision-generating processes are known.

\subsection{Incentive design and pricing in bilevel games}\label{subsec:incentive_design}

Bilevel games represent hierarchical decision-making problems in which a leader makes the first decision and then a follower responds with their own decision \citep{colson2007overview}. The leader may represent a policy-maker that generates incentives $\btheta \in \bTheta$ to encourage specific follower decisions that, for example, may be environmentally friendly or socially responsible. When the incentives are monetary, they appear as perturbations to the objective of the follower's decision-making process
\begin{align}
\min_{\bx} \; \left \{ g(\bx, \btheta) \; | \; \bx \in \mX \right \}. \label{eq:follower}
\end{align}
Assuming that the follower's problem is fully observable to the leader, the leader's problem is
\begin{align}
\min_{\bx, \btheta} \; \left \{  f(\bx)  \; \bigg | \; \bx \in  \argmin_{\bx \in \mX} g(\bx, \btheta), \, \btheta \in \bTheta  \right \},  \label{eq:leader} 
\end{align}
where $f(\bx)$ may, for example, represent the environmental cost of follower decision $\bx$.

While the leader's problem can sometimes be reformulated as a single, large monolothic formulation, a more efficient approach is to decouple the computation of follower ``target" decisions and incentives, i.e., as a master problem and an inverse optimization subproblem. This process is depicted in Figure \ref{fig:app_classical_IO}. In the master problem, the leader solves a variant of problem~\eqref{eq:leader} where they minimize their own objective assuming control of the follower's decision. This generates a ``target" decision $\bhx$. To compute incentives $\btheta \in \bTheta$ that render $\bhx$ an optimal solution to the follower's problem, the inverse optimization subproblem is solved for forward problem~\eqref{eq:follower}. We describe several examples of applications where this decomposition approach is used. 

\begin{figure}[t]
    \centering
    \scalebox{0.9}{%
\begin{tikzpicture}[font=\sffamily]
    \draw (0, 0)    node[box, minimum height=1.7cm, minimum width=3cm, text width=3.0cm] (mp) {Master \\ problem};
    \draw (4, 0)    node[text width=2cm, align=center] (targets) {Target decision(s)};
    \draw (4, -3)   node[minimum height=1.7cm, minimum width=3cm, text width=3.0cm, align=center] (known) {Follower \\ decision-making \\ model(s)};
    \draw (8, -1.5)   node[box, minimum height=1.7cm, minimum width=3cm, text width=3.0cm] (io) {Inverse \\ optimization \\ sub-problem};
    
    \draw (13, -1.5)   node[text width=3.5cm, align=center] (incentives) {Incentives and price mechanisms};
    
    \draw[arrow] (mp) -- (targets);
    \draw[arrow] (targets) -- (io);
    \draw[arrow] (known) -- (io);
    \draw[arrow] (io) -- (incentives);
\end{tikzpicture}
}





    \caption{The solution process of many bilevel optimization problems. In this pipeline, classical inverse optimization is used to compute incentives that can induce followers to generate decisions that are optimal for the leader. These decisions are referred to as target decisions.}
    \label{fig:app_classical_IO}
\end{figure}
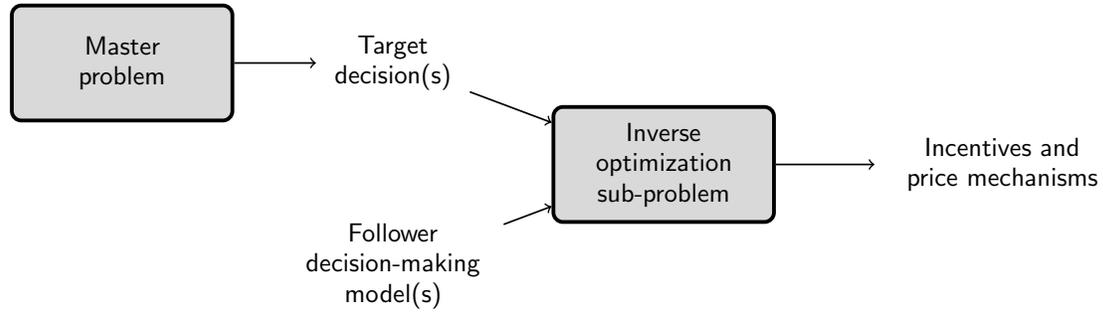

\cite{marcotte2009toll} study the toll design problem for diverting the transportation of hazardous materials away from high-risk and high-population areas. The leader is a local government imposing tolls while the follower is the carrier that is solving a linear optimization routing problem. The authors show that while the leader's problem can be solved as a large mixed integer program, the master-subproblem decomposition is more efficient and has the added benefit of computing tolls that optimize for another criteria in addition to making a low-risk route optimal. Details of this formulation can found in Section \ref{ECsubsec:toll_design}. \cite{esfandeh2016regulating} and \cite{bianco2016game} consider extensions of this toll design problem by accounting for network congestion effects and interactions between competing carriers, respectively. The Variational Inequality and KKT optimality conditions are leveraged to solve the corresponding inverse problems.


Similar decomposition approaches have been examined for bilevel games aimed at mitigating carbon emissions \citep{rathore2021differential}.
For example, \citet{zhou2011designing} consider a setting where the follower is an electricity generator that is planning to expand the generation capacity of different resources. A Partial Inverse Optimization problem (see Section \ref{subsubsec:Partial_IO}) is used to compute carbon tax and subsidy schemes to incentivize greater investment into renewable generation methods. Finally, the classical inverse optimization problem is also found in bilevel revenue maximization problems. Here, the leader aims to maximize total revenue collected from followers. \cite{brotcorne2008joint}, \cite{brotcorne2011exact}, and \cite{afcsar2021revenue} show that leader's problem can be solved by decomposing it into a series of inverse optimization problems where each problem computes the maximum amount of profit that can be achieved for a given follower decision.

\subsection{Mechanism design in cooperative games}\label{subsec:mechanism_design}


Consider a transportation market with $N$ carriers, each facing demand to transport people or goods across a network $\mG(\mV, \mA)$ with nodes $\mV$ and directed arcs $\mA$. Each carrier $i \in \{1, \ldots, N\}$ uses a set of capacitated transportation resources to obtain profits $r_{(v_1,v_2),i}$ for each unit of demand satisfied between node pairs $v_1,v_2 \in \mV$, up to a certain limit. Examples of carriers include freight transportation companies \citep{agarwal2010network} or airlines \citep{houghtalen2011designing}. 
While carriers can operate independently, the demand for each carrier may not match their resource capacity constraints. Thus, the total profits gained from behaving cooperatively, i.e., as an alliance, is greater than if carriers behave independently.

Inverse optimization can be used to compute profit-sharing mechanisms that incentivize carriers to make decisions that collectively lead to a cooperatively optimal solution \citep{agarwal2008mechanism,agarwal2010network, houghtalen2011designing, zheng2015network, zheng2015empty}. For clarity, we consider a simplified model from this literature by assuming that $\mG$ is a fully connected network, so that we may simplify the reference of each pair of nodes $v_1, v_2 \in \mV $ to arc notation $a \in \mA$. We also assume zero operating costs for each carrier. Suppose that there is a centralized operator who can assume control over all resources from each carrier $i$. This operator can generate any decision $\bx \in \Xfeas$ where each variable $x_a$ denotes the flow on arc $a$ and $\Xfeas$ denotes flows that do not exceed collective carrier demands and resource constraints. A cooperatively optimal solution $\bhx \in \mX$ can be generated by solving the following master problem, which maximizes the collective profit obtained over the entire network: 
\begin{align*}
\max_{\bx}  \; \left \{ \sum_{i = 1}^N \, \sum_{a \in \mA} r_{a, i} x_{a}\; \bigg | \; \bx \in \mX \; \right \}.
\end{align*}
A subsequent assignment problem decomposes $\bhx$ into flows for each carrier $\bhx_i$ such that $\bhx = \sum_{i = 1}^N \bhx_i$, where each $\bhx_i \in \mX_i$ represents a ``target'' decision for carrier $i$.

In an alliance, each carrier has full control over their resources and can make any decision $\bx_i \in \mX_i$. One way to incentivize the target decisions is through a 
cost-per-arc $\theta_a$ agreement. This mechanism rewards carriers for sharing resource capacity along arcs $a \in \mA$, and penalizes them for using their resources to satisfy individual own demand. Carrier $i$'s problem under this mechanism is
\begin{align*}
\max_{\bx_i^i, \bx^{-i}_i} \; \left \{ \sum_{a \in \mA} r_{a,i} x^i_{a,i} +  \sum_{a \in \mA} \Big (\gamma_{a,i} \: x^{-i}_{a, i} - (1-\gamma_{a,i}) x^i_{a, i} \Big ) \theta_{a} \; \bigg | \; \bx_i \in \mX_i \; \right \},
\end{align*}
where the parameter $\gamma_{a,i}$ denotes a pre-defined fraction of the arc cost $\theta_a$ that must be paid by carrier $i$ to use arc $a$, and similarly, the fraction of arc cost $\theta_a$ that is received if $i$ shares its resources along arc $a$ \citep{agarwal2010network}. 
The variables $x^i_{a,i}$ and $x^{-i}_{a,i}$, where $x_{a,i} = x^i_{a,i} + x^{-i}_{a,i}$, denote the subset of resources that are used to satisfy $i$'s demand and the demand of all other carriers, respectively. To compute a single vector $\btheta = (\theta_1, \ldots, \theta_{|\mA|})$ for which every $\bhx_i$ is inverse-feasible, we solve the inverse optimization problem
\begin{equation}\label{eq:collaborative_IO}
    \min_{\btheta}  \; \left \{ 0 \; \Big| \; \bhx_i \in \Xopt_i(\btheta) \ \forall i \in \{1,\ldots,N\}, \; \btheta \in \bTheta \; \right \}.
\end{equation}
An important feature of this inverse model is that $\btheta$ must satisfy a large number of other constraints, denoted by set $\bTheta$, which include budget-balancing and stability constraints. The latter ensures that all carrier decisions are in the core of the cooperative game \citep{lucas1971some}, i.e., that no subset of carriers finds it more profitable to form a subcoalition \citep{agarwal2010network}. 
This inverse problem is solved by reformulating the carriers' forward problems using complementary slackness or KKT conditions \citep{agarwal2008mechanism,agarwal2010network, houghtalen2011designing, zheng2015network, zheng2015empty}.


\section{Applications in Estimation Problems}
\label{sec:modern_applications}

Next, we discuss applications in transportation (Section \ref{subsec:transportation}), power systems (Section \ref{subsec:markets}), and healthcare (Section \ref{subsec:healthcare}). 
In Section \ref{subsec:other_apps}, we briefly highlight additional application areas.

\subsection{Transportation and logistics}\label{subsec:transportation}



\subsubsection{Transportation system modeling.}\label{subsubsec:system_trans_IO} 

Equilibrium models have a rich history of modeling congestion in road networks \citep{patriksson2015traffic}. In these models, the travel time on each link in the network is described using a cost function that depends on volume of traffic flow on the link, and overall traffic in the network is assumed to satisfy a set of equilibrium conditions defined by these cost functions. However, finding the ``right" cost functions to describe a specific network is challenging. \cite{bertsimas2015data} apply the non-parametric Inverse Variational Inequality model (see Section \ref{subsec:non-param}) to estimate these cost functions using data of historical traffic flows in a given geographical region. Under the computed cost functions, the observed traffic flow data approximately satisfy the equilibrium conditions, which are in fact the optimality conditions of a convex forward problem. We leave the mathematical details to Section~\ref{subEC:traffic_VI}. 

The models that are estimated using the inverse optimization approach have been shown to produce high-quality forecasts of traffic flow across a number of different real-world benchmarks \citep{bertsimas2015data, zhang2016price, zhang2018price}. Aside from prediction, the estimated cost functions provide insight into network inefficiency and the value of smart traffic control \citep{zhang2018price}. Furthermore, they can also be used to guide infrastructural investment and policy decisions, for example, by estimating the effects of new bike lanes on existing road networks \citep{liu2022planning}. In this context, inverse formulations can be developed for richer equilibrium models. For example, \citet{zhang2017data} consider inverse problems for multi-class transportation models where different vehicle types contribute differently to congestion levels.

\subsubsection{Personalized agent models.}\label{subsubsec:agent_trans_IO}

Prescriptive models for routing and scheduling in the transportation literature typically assume that the objective function describing the ``costs" of every decision is given. In practice, these costs may be unknown, vaguely defined or difficult to characterize. For instance, even under the exact same conditions, two individuals may prefer different decisions due to nonidentical, subjective preferences over various factors. The ``costs" in these models thus require estimation.

\textbf{Route recommendation.} 
\citet{ronnqvist2017calibrated} describe a widely-used route recommendation system for long haul truck drivers in the Swedish forest industry. The recommendation system (i.e., the forward problem) is a multi-objective minimum cost routing problem where the feasible set $\mX = \{\bx \; | \; \bA\bx = \bb\}$ is defined by node-arc incidence matrix $\bA$ and vector $\bb$ capturing the origin and destination node. The ``cost" for any feasible path $\bx \in \mX$ is a weighted sum of $B$ different objective functions $f(\bx) = \sum_{b=1}^B \theta_b f^{(b)}(\bx)$, where each objective $f^{(b)}(\bx) = \bu_b^\top \bx$ represents a linear penalty such as fuel costs, road quality, speed limits and driver safety. To calibrate weights associated with each objective function, a sample of ``preferred routes'' between various origin and destination nodes are collected from the drivers. Given these routes $\bhx_i$ for $i \in \{1,\ldots, N\}$, an inverse optimization model that minimizes the Absolute Sub-optimality loss (Section \ref{subsubsec:abs_suboptimality}) is solved: 
    \begin{align*}
    \begin{split}
        \min_{\btheta, \blambda_i} \quad & \frac{1}{N} \sum_{i=1}^N w_i  \bigg | \, \Big( \sum_{b=1}^B \theta_b \bu_b^\tpose \Big)\, \bhx_i - \bb_i^\tpose \blambda_i \, \bigg | \\
        \st \quad   & \bA^\tpose \blambda_i = \sum_{b = 1}^B \theta_b \bu_b, \quad \blambda_i \geq \bzero, \quad \forall i \in \{1, \dots, N\} \\
                    & \btheta \in \bTheta.
    \end{split}
    \end{align*}
    The weights $w_i$ associated with each data point $i$ prioritize different preferred paths, for example, by giving a higher value to paths between nodes that are farther apart.

\citet{chen2021inverse} consider inverse optimization for capacitated vehicle routing models, where routes chosen by long-term ``expert" drivers are used to refine travel time estimates in a network. Given a network with $m$ nodes and $n$ arcs, the routing problem seeks to find a minimum cost route that satisfies all demand for a homogeneous good at every node using a capacitated truck that replenishes inventory by revisiting depots. Let $Q$ denote the capacity of vehicle. Let $\bx$ represent arc choices and $\by \in \mathbb{R}^m_+$ represent the amount of a good in the vehicle at each node on the route. Then, given a demand vector $\bpi \in \mathbb{R}^m_+$, the forward routing problem can be formulated as the following mixed integer linear optimization model
    \begin{align*}
    \min_{\bx, \by} \left \{\btheta^\top \bx \; \Big | \; \bA\bx + \bB\by \geq \bd, \; \bpi \leq \by \leq Q\mathbf{1}, \; \bx \in \{0,1\}^n \right \}.
    \end{align*}
    Given a dataset $\{(\bhx_i, \bhy_i), \bpi_i\}_{i=1}^N$,~\citet{chen2021inverse} estimate $\btheta$ using Inverse Online Learning with the Multiplicative Weights Update rule (see Section \ref{subsec:online_learning}). 
    The authors show in a case study with an online retailer that the calibrated model can reproduce many ``expert" decisions.

\textbf{Agent modeling and policy-making.}
Third party decision-makers such as urban planners or public agencies can also use calibrate agent models to help guide policy decisions. \cite{ref:chow_12}, \cite{kang2013location} and \cite{chow2015activity} use inverse optimization to calibrate ``household activity" models, which are mixed integer programming problems that model individual spatiotemporal travel patterns. The authors use real household survey data from cities across North America to calibrate these models, which can then be used to evaluate the effects of new investments in transit. 
Alternatively, \citet{you2016inverse} use GPS data to calibrate vehicle routing models describing the freight activity of trucking companies. These models help urban planners better predict potential changes in trucking activity when introducing new parking, delivery or distribution regulations. \citet{wei2018modeling} calibrate airline crew scheduling models using real data of crew schedules from a regional carrier in the U.S.. The calibrated models provide insights into how different airlines handle scheduling delays and how air traffic controllers can better manage centralized operations. With the exception of \citet{wei2018modeling}, where the authors develop their own local search heuristic to solve the inverse mixed integer optimization model, the rest of this agent modeling literature employs an algorithm 
that closely resembles Inverse Online Learning with an inverse optimization oracle (see Section \ref{subsec:online_learning}). The cutting plane algorithm, discussed in Section \ref{subsec:integer}, is used to solve the oracle.


\subsection{Power systems}\label{subsec:markets}



\subsubsection{Market-clearing models.}\label{subsubsec:market-clearing}
In North America, electricity markets are typically supported by a market operator, whose role includes scheduling production and power flow by matching supply offers with demand bids for future dates subject to certain institutional constraints. This market-clearing process is generally formulated as a linear optimization model and is solved at regular intervals every day. Furthermore, the final allocations (solutions), prices (shadow prices), and 
bid functions are published to maintain transparency in the market-clearing process. \cite{birge2017inverse} describe an inverse optimization model that uses this public information to infer unobservable model parameters reflecting institutional constraints. Learning about these constraints can provide market participants and policy-makers with a better understanding of transmission capacities, which can inform bidding strategies or infrastructural investment decisions. The inverse model assumes a noise-free environment in which the constraint matrix to be estimated must make the data feasible and satisfy complementary slackness. Details of this estimation procedure are provided in Section \ref{subEC:market_clearing}. 

This application also demonstrates how optimal dual variables of the forward problem can be both meaningful and directly accessible as data. These values can help restrict the parameter search space and may allow for more precise parameter estimation using fewer data points.

\subsubsection{Agent demand models.} \label{subsubsec:consumption_models}
%
Price-responsive consumers of electricity include smart buildings, electric vehicles and data centers. Accurate demand models of these consumers can help operators, service providers and retailers make better planning, pricing and bidding decisions. We describe the use of inverse optimization to estimate utility functions of consumer demand models, which can then be used to generate demand predictions.

\paragraph{Forward model.} 

Let $y$ be a variable representing a level of electricity consumption. Given a price $p_t$ at time period $t$, we assume that an agent's electricity consumption behaviour is the solution to an unobserved constrained utility maximization problem
\begin{equation}\label{model:elec_cons_init}
    \max_{y} \; \left \{ u_t(y) - p_ty \; | \; l_t \leq y \leq r_t \right \},
\end{equation}
where $u_t(y)$ is the agent's utility function and $(l_t, r_t)$ represent known lower and upper bounds on consumption defined by physical properties such as building thermal dynamics \citep{fernandez2021forecasting}, or operational constraints such as building temperature settings \citep{saez2016data} 
and electric vehicle charging levels \citep{fernandez2019ev}. We assume that the utility function is a stepwise function
\begin{equation*}
  u_t(y) :=
  \begin{cases}
  c_{1,t}y \quad & \text{if $y < s$,}\\
  c_{1,t}s_{1} + \ldots + c_{i,t}(y - (i-1)s)  \quad & \text{if $(i-1)s \leq y \leq is, \ i \geq 2$,}
  \end{cases}
\end{equation*}
where each ``step" has equal width $s$ for a total of $I$ steps. Furthermore, we assume that $c_{i}^t \geq c^t_{j}$ for all $i < j$, which implies that the marginal utility is non-increasing in the consumption amount. With this function, model~\eqref{model:elec_cons_init} can be rewritten as a linear program 
\begin{align*}
\max_{\bx} \;  \left \{ \left(\, \sum_{i = 1}^I c_{i,t} x_i \,\right) - p_t\mathbf{1}^\top \bx \; \Bigg | \; l_t \leq \mathbf{1}^\top \bx \leq r_t \right \} 
\end{align*}
with decision vector $\bx$ representing the amount consumed in each step.

\paragraph{Inverse model.}
The inverse optimization model estimates the components $c_{i,t}$ of the utility function for all $i$.
Let the vector $\bpi_t = (\pi_{t,1}, \ldots, \pi_{t,m})$ describe a set of $m$ ``features'' at period $t$. Features can include recent consumption behavior or descriptions of weather conditions. 
Consider a data set of $N$ observations of consumption-features-price tuples $\{(\hat y_t, p_t, \bpi_t)\}_{t=1}^N$.
We first transform each $\hat y_t$ into a vector $\bhx_t$ by setting the first $\lfloor y_t/s \rfloor$ elements of the vector as $s$, the $\lceil y_t/s \rceil$-th element as $\lfloor y_t/s \rfloor -s$, and the last $I-\lceil y_t/s \rceil$ elements as zero. 
We then define the utility function components as a linear regression of the features
\begin{align*}
    c_{i,t} = \theta_{i,0} + \theta_{i,1}\pi_{t,1} + \ldots + \theta_{i,m}\pi_{t, m}, \quad \forall i \in \{1, \dots, I\}.
\end{align*}
Let $\kappa >0$ be a regularization parameter. The regularized inverse optimization problem is 
\begin{align*}
\min_{\btheta \in \bTheta} \; \frac{1}{N} \sum_{t = 1}^N \ell \left (\bhx_t, \Xopt_t(\btheta) \right ) + \kappa \norm{\btheta}_p,
\end{align*}
which estimates the regression parameters of the utility function subject to the utility taking the shape of a stepwise non-increasing linear function.
Both the Minimum Distance loss function $\ell_{\mathrm{D}}(\bhx_t, \Xopt_t(\btheta))$ (Section \ref{sec:datadriven_distance}) and the Absolute Sub-optimality loss function $\ell_{\mathrm{ASO}}(\bhx_t, \Xopt_t(\btheta))$ (Section \ref{subsec:suboptimality}) have been examined in the literature \citep{saez2016data,saez2017short,fernandez2019ev,fernandez2021forecasting,lu2018data}. 
The regularization hyperparameter $\kappa$ can be tuned using cross-validation.  Since model \eqref{model:elec_cons_init} is a linear optimization model, strong duality can be used to formulate and solve the inverse problem.

The calibrated forward models are shown to outperform both classical and black-box time series forecasting methods over synthetic and real data. 
The estimated utility function also possesses the same stepwise linear shape as the demand bids that must be submitted to market-clearing operators, meaning that the estimated functions can be directly submitted as a bid by a retailer. This latter observation reveals another benefit of using such a ``structured" estimation framework.


\subsection{Healthcare}\label{subsec:healthcare}


\subsubsection{Healthcare systems.}



\paragraph{Clinical pathway concordance measurement.}

Clinical pathways specify standardized processes in healthcare delivery for a specific group of patients. Patient journeys through the healthcare system that follow these pathways are considered ``concordant'', whereas alternate pathways that ignore certain steps or include extraneous steps are deemed ``discordant''. \citet{chan2022inverse} use inverse optimization to develop a quantitative metric that measures the concordance of patient-traverse pathways against the recommended clinical pathways. Patient pathways are modeled as a walk on a graph representing the healthcare system. Clinical pathways are shortest paths on this graph. The following model represents an inverse shortest path problem, where the goal is to find arc costs $\btheta$ that make the clinical pathways shortest paths.
\begin{align} \label{model:IOclinpathway}
\begin{split}
\min_{\btheta, \blambda,\boldsymbol{\epsilon}} \quad & \sum_{i=1}^{N}(\epsilon^{r}_{i})^2\\
\st \quad & \bA^\tpose \blambda \le \btheta\\
&  \btheta^\tpose \hat\bx^{r}_{i}=\bb^\tpose \blambda+\epsilon^{r}_{i}, \quad  \forall i \in \{1, \ldots, N \} \\
&  \|\btheta\|_{\infty}= 1 \\
& \bA \btheta= \bzero.
\end{split}
\end{align}
In this model, $\bA$ and $\bb$ encode the flow balance constraints in a shortest path problem with one source and one sink. We assume there are $N$ clinical pathways given, $\hat\bx^r_1, \ldots, \hat\bx^r_N$, between the source and sink. This model is a slight variation of the Absolute Sub-optimality model~\eqref{eq:aso_lp_form_one_feas_region}, where: (i) the objective is the sum of squared duality gaps instead of sum of absolute duality gaps, and (ii) the duality constraints are modified, since the shortest path forward problem is a standard form linear program. The normalization constraint $\|\btheta\|_\infty = 1$ prevents $\btheta = \bzero$ from being optimal. The constraint $\bA\btheta = \bzero$ deals with the lower dimensionality of the forward feasible region (due to flow balance equality constraints) and ensures the cost vector does not point orthogonal to the entire feasible region. These constraints form $\bTheta$. Additional details are given in Section~\ref{sec:DPMEC}.

This work has been extended to the more complex clinical pathways associated with breast cancer diagnosis and treatment, modeled using hierarchical networks~\citep{chan2021inverse_hier}.



\paragraph{Healthcare contract design.}

The United States Medicare Shared Savings Program (MSSP) offers providers of medical care, known as Accountable Care Organizations (ACOs), bonus payments if they can reduce the cost of providing care for Medicare patients, as long as certain quality constraints are maintained. 
%
\citet{aswani2019data} propose a modified MSSP contract with a performance-based subsidy for an ACO's upfront investment, to make participation in the MSSP more attractive. They use a principal-agent framework, where Medicare is the principal (aiming to maximize savings) and an ACO is the agent (aiming to maximize expected payoff).
%
The resulting problem is bilevel since the solution to the ACO's problem, its optimal savings, is embedded in the constraints of Medicare's contract design problem. The key parameter to be estimated is an ACO's ``type'', which is unobservable to Medicare and represents the ACO's ability to generate savings. 
Using a public data set 
that includes data on ACO spending and savings,~\citet{aswani2019data} solves a data-driven inverse optimization model to estimate the distribution of ACO types from the data. They use this distribution to construct optimal subsidy-based contracts that can simultaneously increase Medicare savings and ACO payments.


\paragraph{Breast cancer screening.}
Disease screening strategies depend on the sensitivity and specificity of the test.~\citet{ayer2015inverse} estimates the range of sensitivity and specificity values of a hypothetical test that would make a given screening policy optimal. Disease progression under a given screening policy is modeled as a partially observable Markov chain. The resulting inverse optimization problem becomes a nonlinear problem and a complete solution method is proposed. 
%

\subsubsection{Personalized decision making.}


\paragraph{Radiation therapy treatment planning.}

Radiation therapy (RT) is one of the main ways to treat cancer. 
Treatment design --  the angles from which the radiation is delivered, the shape of the radiation beam from each angle, and the distribution of intensity over the beam's shape (divided into small beamlets) -- is performed by solving a large optimization model with an objective that is a weighted sum of multiple sub-objectives. Inverse optimization learns these weights from historical treatments. Using real data from previously treated prostate cancer patients,~\citet{chan2014generalized} show that simple linear models with inversely optimized objective function weights for a small number of objectives could recreate treatments that were designed using complex, non-convex treatment planning models with many objectives. Section~\ref{sec:RTEC} provides more details.

Extensions from a modeling perspective include using convex objective functions \citep{chan2018trade, sayre2014automatic}, learning objective functions rather than the weights~\citep{ajayi2022objective}, and using input data that represents partial dose distributions  \citep{babier2018knowledge}. Another direction of research includes combining inverse optimization with machine learning methods to generate personalized weights for each patient based on patient-specific anatomy, and to automate the treatment planning process for prospective patients~\citep{lee2013predicting,boutilier2015models,babier2018knowledge,babier2020knowledge,babier2020importance,goli2018small}. Finally,~\citet{ghate2020imputing} uses inverse optimization to estimate unobservable parameters of a non-convex quadratically constrained quadratic problem that finds optimal dosing schedules given observed schedules from clinical studies.



\paragraph{Liver transplantation.}

Several papers have formulated Markov decision process models to determine when a patient should accept a living-donor transplant~\citep{alagoz2004optimal, alagoz2007choosing, alagoz2007determining}. In~\cite{erkin2010eliciting}, the authors propose the inverse problem. By assuming a control-limit policy over ordered health states describing when a patient switches from waiting to accepting an offered liver, inverse optimization can determine patient preferences over the health states that make the observed policy optimal. Their paper applied formulation \eqref{eq:inverse_mdp} with a weighted 1-norm in the objective and polyhedral constraints on the cost vector to generate a linear inverse problem.



\subsection{Additional applications}\label{subsec:other_apps}


\begin{itemize}
    \item \textbf{Finance:} Inverse optimization can be used to learn risk preferences of investors by using data on investment strategies.  
    Consider the Markowitz mean-variance portfolio optimization problem
    \begin{align*}
    \max_{\bx} \quad & \bx^\top \bQ \bx + \kappa \br^\top \bx\\
    \st \quad & \bA \bx \geq \bb,
    \end{align*}
    where $\bQ$ represents the covariance of stock returns, $\br$ is the vector of mean returns, and $\kappa$ is the preferred risk tolerance of the portfolio. Here, the linear constraints $\bA\bx \geq \bb$ denote any additional restrictions that may be imposed on investments. If we assume that observed investment decisions approximately reflect solutions to the Markowitz model, and that we have observed data $\{(\bhx_i, \br_i, \bQ_i, \bA_i, \bb_i)\}_{i=1}^N$, then we can use inverse optimization to infer the value of $\kappa$. Risk preference learning in Markowitz models is studied in \cite{yu2021learning} using Inverse Online Learning with Minimum Distance loss (see Section \ref{subsubsec:IOL_inverseoracle}). The inverse model uses the optimality conditions
    \begin{align*}
        \Xopt(\kappa) = \left \{ \bx \; \bigg | \; \bA\bx \geq \bb, \; \by^\top(\bA\bx - \bb) = 0, \; \bQ\bx - \kappa \br - \bA^\top \by = \bzero, \; \by \geq \mathbf{0} \right \}.
    \end{align*}
    \citet{li2021inverse} establish a more general framework for risk preference inference by developing inverse models for general, convex risk functions. Learned preferences can be used by financial planners and new automated robo-advising systems to provide investment support that is tailored to individual tastes \citep{alsabah2019robo}.  They can also be used to assess additional properties of existing portfolios. For example, \citet{utz2014tri} uses a variant of the inverse Markowitz model to assess the level of environmental and social consideration that is given to socially-responsible mutual fund products when designed by different fund managers. 

    \item \textbf{Economics:} Inverse models have developed for multi-player Nash games, which are well-studied and appear in different application domains. In these games, each player $i \in \{1, \dots, N\} $ is assumed to simultaneously solve the following optimization problem, where $\bx^i$ denotes player $i$'s decision and $\bx^{-i}$ denotes the decisions of all other players:
    \begin{align*}
        \max_{\bx^i} \left \{ f(\bx^i, \bx^{-i}, \btheta^i) \; \bigg | \; \bx^i \in \mX^i \right \} \quad \forall i \in \{1, \dots, N\}.
    \end{align*}
    The objective function is typically assumed to be linear or quadratic in $\bx^i$ and $\bx^{-i}$. These games are often used to model decentralized markets where market participants compete by deciding on production quantity. Under mild assumptions, the equilibria of these multi-player games can be described using variational inequalities and KKT conditions. To infer the $\{\btheta_i\}_{i=1}^N$ terms, \citet{bertsimas2015data} consider a data-driven inverse VI approach while \citet{ratliff2014social} and \citet{risanger2020inverse} apply a data-driven inverse KKT approach. \citet{allen2021using} extends the inverse KKT approach to generalized Nash equilibria, where player decisions not only affect each other's objectives but also each other's feasible regions, for example, through a joint capacity constraint.
    
    \item \textbf{Biology:} Optimization models are used to model certain cellular processes. For example, linear optimization can be used to model the metabolism of a cell \citep{orth2010flux}. \citet{ref:zhao_cdc15} and  \citet{zhao2016mapping} apply data-driven inverse linear models with sub-optimality loss to infer objective functions that best explain and predict cellular metabolism behavior.

\end{itemize}

\section{Conclusion}\label{sec:conclusion}

This paper reviews the extensive literature on inverse optimization. 
We conclude with what we believe are two fruitful directions for future research. 

The first direction is to develop more efficient solution methods. As we have pointed out in this review, many inverse models do not scale well with data. For example, when decision data points are drawn from different feasible regions, each data point requires the introduction of a new set of variables and constraints into the inverse model. Future work could aim to improve the solvability of large-scale inverse models, for example through novel decomposition or approximation methods. 

The second direction is to develop inverse models that can better incorporate distributional information. The literature to date has little to no distributional assumptions about the input data, model parameters, or the degree of model mis-specification. This is in direct contrast with other models in statistics or econometrics, which rely heavily on distributional information. Incorporating such information may improve the estimation or prediction quality of the models.


\bibliographystyle{plainnat}
\bibliography{InverseBib}

\begin{thebibliography}{143}
\providecommand{\natexlab}[1]{#1}
\providecommand{\url}[1]{\texttt{#1}}
\expandafter\ifx\csname urlstyle\endcsname\relax
  \providecommand{\doi}[1]{doi: #1}\else
  \providecommand{\doi}{doi: \begingroup \urlstyle{rm}\Url}\fi

\bibitem[Abbeel and Ng(2004)]{abbeel2004apprenticeship}
Pieter Abbeel and Andrew~Y Ng.
\newblock Apprenticeship learning via inverse reinforcement learning.
\newblock In \emph{International Conference on Machine learning}. ACM, 2004.

\bibitem[Af{\c{s}}ar et~al.(2021)Af{\c{s}}ar, Brotcorne, Marcotte, and
  Savard]{afcsar2021revenue}
Sezin Af{\c{s}}ar, Luce Brotcorne, Patrice Marcotte, and Gilles Savard.
\newblock Revenue optimization in energy networks involving self-scheduled
  demand and a smart grid.
\newblock \emph{Computers \& Operations Research}, page 105366, 2021.

\bibitem[Agarwal and Ergun(2008)]{agarwal2008mechanism}
Richa Agarwal and {\"O}zlem Ergun.
\newblock Mechanism design for a multicommodity flow game in service network
  alliances.
\newblock \emph{Operations Research Letters}, 36\penalty0 (5):\penalty0
  520--524, 2008.

\bibitem[Agarwal and Ergun(2010)]{agarwal2010network}
Richa Agarwal and {\"O}zlem Ergun.
\newblock Network design and allocation mechanisms for carrier alliances in
  liner shipping.
\newblock \emph{Operations Research}, 58\penalty0 (6):\penalty0 1726--1742,
  2010.

\bibitem[Aghassi et~al.(2006)Aghassi, Bertsimas, and
  Perakis]{aghassi2006solving}
Michele Aghassi, Dimitris Bertsimas, and Georgia Perakis.
\newblock Solving asymmetric variational inequalities via convex optimization.
\newblock \emph{Operations Research Letters}, 34\penalty0 (5):\penalty0
  481--490, 2006.

\bibitem[Ahmadi et~al.(2020)Ahmadi, Ganjkhanloo, and
  Ghobadi]{ahmadi2020inverse}
Farzin Ahmadi, Fardin Ganjkhanloo, and Kimia Ghobadi.
\newblock Inverse learning: a data-driven framework to infer optimizations
  models.
\newblock \emph{arXiv preprint arXiv:2011.03038}, 2020.

\bibitem[Ahmed and Guan(2005)]{ahmed2005inverse}
Shabbir Ahmed and Yongpei Guan.
\newblock The inverse optimal value problem.
\newblock \emph{Mathematical Programming}, 102\penalty0 (1):\penalty0 91--110,
  2005.

\bibitem[Ahuja and Orlin(2001)]{ahuja2001inverse}
Ravindra~K Ahuja and James~B Orlin.
\newblock Inverse optimization.
\newblock \emph{Operations Research}, 49\penalty0 (5):\penalty0 771--783, 2001.

\bibitem[Ajayi et~al.(2022)Ajayi, Lee, and Schaefer]{ajayi2022objective}
Temitayo Ajayi, Taewoo Lee, and Andrew~J Schaefer.
\newblock Objective selection for cancer treatment: an inverse optimization
  approach.
\newblock \emph{Operations Research}, 70\penalty0 (3):\penalty0 1717--1738,
  2022.

\bibitem[Alagoz et~al.(2004)Alagoz, Maillart, Schaefer, and
  Roberts]{alagoz2004optimal}
Oguzhan Alagoz, Lisa~M Maillart, Andrew~J Schaefer, and Mark~S Roberts.
\newblock The optimal timing of living-donor liver transplantation.
\newblock \emph{Management Science}, 50\penalty0 (10):\penalty0 1420--1430,
  2004.

\bibitem[Alagoz et~al.(2007{\natexlab{a}})Alagoz, Maillart, Schaefer, and
  Roberts]{alagoz2007choosing}
Oguzhan Alagoz, Lisa~M Maillart, Andrew~J Schaefer, and Mark~S Roberts.
\newblock Choosing among living-donor and cadaveric livers.
\newblock \emph{Management Science}, 53\penalty0 (11):\penalty0 1702--1715,
  2007{\natexlab{a}}.

\bibitem[Alagoz et~al.(2007{\natexlab{b}})Alagoz, Maillart, Schaefer, and
  Roberts]{alagoz2007determining}
Oguzhan Alagoz, Lisa~M Maillart, Andrew~J Schaefer, and Mark~S Roberts.
\newblock Determining the acceptance of cadaveric livers using an implicit
  model of the waiting list.
\newblock \emph{Operations Research}, 55\penalty0 (1):\penalty0 24--36,
  2007{\natexlab{b}}.

\bibitem[Allen et~al.(2022)Allen, Gabriel, and Dickerson]{allen2021using}
Stephanie Allen, Steven~A Gabriel, and John~P Dickerson.
\newblock Using inverse optimization to learn cost functions in generalized
  nash games.
\newblock \emph{Computers \& Operations Research}, 142:\penalty0 105721, 2022.

\bibitem[Alsabah et~al.(2021)Alsabah, Capponi, Ruiz~Lacedelli, and
  Stern]{alsabah2019robo}
Humoud Alsabah, Agostino Capponi, Octavio Ruiz~Lacedelli, and Matt Stern.
\newblock Robo-advising: Learning investors’ risk preferences via portfolio
  choices.
\newblock \emph{Journal of Financial Econometrics}, 19\penalty0 (2):\penalty0
  369--392, 2021.

\bibitem[Arora et~al.(2012)Arora, Hazan, and Kale]{arora2012multiplicative}
Sanjeev Arora, Elad Hazan, and Satyen Kale.
\newblock The multiplicative weights update method: a meta-algorithm and
  applications.
\newblock \emph{Theory of Computing}, 8\penalty0 (1):\penalty0 121--164, 2012.

\bibitem[Arora and Doshi(2021)]{arora2018survey}
Saurabh Arora and Prashant Doshi.
\newblock A survey of inverse reinforcement learning: Challenges, methods and
  progress.
\newblock \emph{Artificial Intelligence}, 297:\penalty0 103500, 2021.

\bibitem[Aswani et~al.(2018)Aswani, Shen, and Siddiq]{aswani2018inverse}
Anil Aswani, Zuo-Jun~Max Shen, and Auyon Siddiq.
\newblock Inverse optimization with noisy data.
\newblock \emph{Operations Research}, 66\penalty0 (3):\penalty0 870--892, 2018.

\bibitem[Aswani et~al.(2019)Aswani, Shen, and Siddiq]{aswani2019data}
Anil Aswani, Zuo-Jun~Max Shen, and Auyon Siddiq.
\newblock Data-driven incentive design in the medicare shared savings program.
\newblock \emph{Operations Research}, 67\penalty0 (4):\penalty0 1002--1026,
  2019.

\bibitem[Ayer(2015)]{ayer2015inverse}
Turgay Ayer.
\newblock Inverse optimization for assessing emerging technologies in breast
  cancer screening.
\newblock \emph{Annals of Operations Research}, 230\penalty0 (1):\penalty0
  57--85, 2015.

\bibitem[Babier et~al.(2018)Babier, Boutilier, McNiven, and
  Chan]{babier2018knowledge}
Aaron Babier, Justin~J Boutilier, Andrea~L McNiven, and Timothy~CY Chan.
\newblock Knowledge-based automated planning for oropharyngeal cancer.
\newblock \emph{Medical Physics}, 45\penalty0 (7):\penalty0 2875--2883, 2018.

\bibitem[Babier et~al.(2020{\natexlab{a}})Babier, Mahmood, McNiven, Diamant,
  and Chan]{babier2020importance}
Aaron Babier, Rafid Mahmood, Andrea~L McNiven, Adam Diamant, and Timothy~CY
  Chan.
\newblock The importance of evaluating the complete automated knowledge-based
  planning pipeline.
\newblock \emph{Physica Medica}, 72:\penalty0 73--79, 2020{\natexlab{a}}.

\bibitem[Babier et~al.(2020{\natexlab{b}})Babier, Mahmood, McNiven, Diamant,
  and Chan]{babier2020knowledge}
Aaron Babier, Rafid Mahmood, Andrea~L McNiven, Adam Diamant, and Timothy~CY
  Chan.
\newblock Knowledge-based automated planning with three-dimensional generative
  adversarial networks.
\newblock \emph{Medical Physics}, 47\penalty0 (2):\penalty0 297--306,
  2020{\natexlab{b}}.

\bibitem[Babier et~al.(2021)Babier, Chan, Lee, Mahmood, and
  Terekhov]{chan2018multiple}
Aaron Babier, Timothy~CY Chan, Taewoo Lee, Rafid Mahmood, and Daria Terekhov.
\newblock An ensemble learning framework for model fitting and evaluation in
  inverse linear optimization.
\newblock \emph{INFORMS Journal on Optimization}, 3\penalty0 (2):\penalty0
  119--138, 2021.

\bibitem[B{\"a}rmann et~al.(2018)B{\"a}rmann, Martin, Pokutta, and
  Schneider]{barmann2020online}
Andreas B{\"a}rmann, Alexander Martin, Sebastian Pokutta, and Oskar Schneider.
\newblock An online-learning approach to inverse optimization.
\newblock \emph{arXiv preprint arXiv:1810.12997}, 2018.

\bibitem[Bartlett and Mendelson(2002)]{bartlett2002rademacher}
Peter~L Bartlett and Shahar Mendelson.
\newblock Rademacher and gaussian complexities: Risk bounds and structural
  results.
\newblock \emph{Journal of Machine Learning Research}, 3:\penalty0 463--482,
  2002.

\bibitem[Bertsimas et~al.(2015)Bertsimas, Gupta, and
  Paschalidis]{bertsimas2015data}
Dimitris Bertsimas, Vishal Gupta, and Ioannis~Ch Paschalidis.
\newblock Data-driven estimation in equilibrium using inverse optimization.
\newblock \emph{Mathematical Programming}, 153\penalty0 (2):\penalty0 595--633,
  2015.

\bibitem[Bianco et~al.(2016)Bianco, Caramia, Giordani, and
  Piccialli]{bianco2016game}
Lucio Bianco, Massimiliano Caramia, Stefano Giordani, and Veronica Piccialli.
\newblock A game-theoretic approach for regulating hazmat transportation.
\newblock \emph{Transportation Science}, 50\penalty0 (2):\penalty0 424--438,
  2016.

\bibitem[Birge et~al.(2017)Birge, Horta{\c{c}}su, and Pavlin]{birge2017inverse}
John~R Birge, Ali Horta{\c{c}}su, and J~Michael Pavlin.
\newblock Inverse optimization for the recovery of market structure from market
  outcomes: An application to the miso electricity market.
\newblock \emph{Operations Research}, 65\penalty0 (4):\penalty0 837--855, 2017.

\bibitem[Bodur et~al.(2022)Bodur, Chan, and Zhu]{bodur2022inverse}
Merve Bodur, Timothy~CY Chan, and Ian~Yihang Zhu.
\newblock Inverse mixed integer optimization: Polyhedral insights and trust
  region methods.
\newblock \emph{INFORMS Journal on Computing}, 2022.

\bibitem[Boularias et~al.(2011)Boularias, Kober, and
  Peters]{boularias2011relative}
Abdeslam Boularias, Jens Kober, and Jan Peters.
\newblock Relative entropy inverse reinforcement learning.
\newblock In \emph{International Conference on Artificial Intelligence and
  Statistics}, pages 182--189, 2011.

\bibitem[Boutilier et~al.(2015)Boutilier, Lee, Craig, Sharpe, and
  Chan]{boutilier2015models}
Justin~J Boutilier, Taewoo Lee, Tim Craig, Michael~B Sharpe, and Timothy~CY
  Chan.
\newblock Models for predicting objective function weights in prostate cancer
  {IMRT}.
\newblock \emph{Medical Physics}, 42\penalty0 (4):\penalty0 1586--1595, 2015.

\bibitem[Brotcorne et~al.(2008)Brotcorne, Labb{\'e}, Marcotte, and
  Savard]{brotcorne2008joint}
Luce Brotcorne, Martine Labb{\'e}, Patrice Marcotte, and Gilles Savard.
\newblock Joint design and pricing on a network.
\newblock \emph{Operations Research}, 56\penalty0 (5):\penalty0 1104--1115,
  2008.

\bibitem[Brotcorne et~al.(2011)Brotcorne, Cirinei, Marcotte, and
  Savard]{brotcorne2011exact}
Luce Brotcorne, Fabien Cirinei, Patrice Marcotte, and Gilles Savard.
\newblock An exact algorithm for the network pricing problem.
\newblock \emph{Discrete Optimization}, 8\penalty0 (2):\penalty0 246--258,
  2011.

\bibitem[Burton and Toint(1992)]{burton1992instance}
Didier Burton and Ph~L Toint.
\newblock On an instance of the inverse shortest paths problem.
\newblock \emph{Mathematical Programming}, 53\penalty0 (1-3):\penalty0 45--61,
  1992.

\bibitem[Burton and Toint(1994)]{burton1994use}
Didier Burton and Ph~L Toint.
\newblock On the use of an inverse shortest paths algorithm for recovering
  linearly correlated costs.
\newblock \emph{Mathematical Programming}, 63\penalty0 (1-3):\penalty0 1--22,
  1994.

\bibitem[Cai et~al.(2008)Cai, Duin, Yang, and Zhang]{cai2008partial}
Mao-Cheng Cai, CW~Duin, Xiaoguang Yang, and Jianzhong Zhang.
\newblock The partial inverse minimum spanning tree problem when weight
  increase is forbidden.
\newblock \emph{European Journal of Operational Research}, 188\penalty0
  (2):\penalty0 348--353, 2008.

\bibitem[{\v{C}}ern{\`y} and Hlad{\'\i}k(2016)]{vcerny2016inverse}
Michal {\v{C}}ern{\`y} and Milan Hlad{\'\i}k.
\newblock Inverse optimization: towards the optimal parameter set of inverse
  {LP} with interval coefficients.
\newblock \emph{Central European Journal of Operations Research}, 24\penalty0
  (3):\penalty0 747--762, 2016.

\bibitem[Chan and Kaw(2020)]{chan2020inverse}
Timothy~CY Chan and Neal Kaw.
\newblock Inverse optimization for the recovery of constraint parameters.
\newblock \emph{European Journal of Operational Research}, 282\penalty0
  (2):\penalty0 415--427, 2020.

\bibitem[Chan and Lee(2018)]{chan2018trade}
Timothy~CY Chan and Taewoo Lee.
\newblock Trade-off preservation in inverse multi-objective convex
  optimization.
\newblock \emph{European Journal of Operational Research}, 270\penalty0
  (1):\penalty0 25--39, 2018.

\bibitem[Chan et~al.(2014)Chan, Craig, Lee, and Sharpe]{chan2014generalized}
Timothy~CY Chan, Tim Craig, Taewoo Lee, and Michael~B Sharpe.
\newblock Generalized inverse multiobjective optimization with application to
  cancer therapy.
\newblock \emph{Operations Research}, 62\penalty0 (3):\penalty0 680--695, 2014.

\bibitem[Chan et~al.(2018)Chan, Lee, and Terekhov]{chan2018inverse}
Timothy~CY Chan, Taewoo Lee, and Daria Terekhov.
\newblock Inverse optimization: closed-form solutions, geometry, and goodness
  of fit.
\newblock \emph{Management Science}, 2018.

\bibitem[Chan et~al.(2021)Chan, Forster, Habbous, Holloway, Ieraci, Shalaby,
  and Yousefi]{chan2021inverse_hier}
Timothy~CY Chan, Katharina Forster, Steven Habbous, Claire Holloway, Luciano
  Ieraci, Yusuf Shalaby, and Nasrin Yousefi.
\newblock Inverse optimization on hierarchical networks: An application to
  breast cancer clinical pathways.
\newblock \emph{arXiv preprint arXiv:2108.05806}, 2021.

\bibitem[Chan et~al.(2022)Chan, Eberg, Forster, Holloway, Ieraci, Shalaby, and
  Yousefi]{chan2022inverse}
Timothy~CY Chan, Maria Eberg, Katharina Forster, Claire Holloway, Luciano
  Ieraci, Yusuf Shalaby, and Nasrin Yousefi.
\newblock An inverse optimization approach to measuring clinical pathway
  concordance.
\newblock \emph{Management Science}, 68\penalty0 (3):\penalty0 1882--1903,
  2022.

\bibitem[Chen et~al.(2021)Chen, Chen, and Langevin]{chen2021inverse}
Lu~Chen, Yuyi Chen, and Andr{\'e} Langevin.
\newblock An inverse optimization approach for a capacitated vehicle routing
  problem.
\newblock \emph{European Journal of Operational Research}, 295\penalty0
  (3):\penalty0 1087--1098, 2021.

\bibitem[Chen and K{\i}l{\i}n{\c{c}}-Karzan(2020)]{xinying2020online}
Violet~Xinying Chen and Fatma K{\i}l{\i}n{\c{c}}-Karzan.
\newblock Online convex optimization perspective for learning from dynamically
  revealed preferences.
\newblock \emph{arXiv preprint arXiv:2008.10460}, 2020.

\bibitem[Chow and Djavadian(2015)]{chow2015activity}
Joseph~YJ Chow and Shadi Djavadian.
\newblock Activity-based market equilibrium for capacitated multimodal
  transport systems.
\newblock \emph{Transportation Research Procedia}, 7:\penalty0 2--23, 2015.

\bibitem[Chow and Recker(2012)]{ref:chow_12}
Joseph~YJ Chow and Will~W Recker.
\newblock Inverse optimization with endogenous arrival time constraints to
  calibrate the household activity pattern problem.
\newblock \emph{Transportation Research Part B: Methodological}, 46\penalty0
  (3):\penalty0 463--479, 2012.

\bibitem[Chow et~al.(2014)Chow, Ritchie, and Jeong]{chow2014nonlinear}
Joseph~YJ Chow, Stephen~G Ritchie, and Kyungsoo Jeong.
\newblock Nonlinear inverse optimization for parameter estimation of
  commodity-vehicle-decoupled freight assignment.
\newblock \emph{Transportation Research Part E: Logistics and Transportation
  Review}, 67:\penalty0 71--91, 2014.

\bibitem[Colson et~al.(2007)Colson, Marcotte, and Savard]{colson2007overview}
Beno{\^\i}t Colson, Patrice Marcotte, and Gilles Savard.
\newblock An overview of bilevel optimization.
\newblock \emph{Annals of Operations Research}, 153\penalty0 (1):\penalty0
  235--256, 2007.

\bibitem[Dafermos and Sparrow(1969)]{dafermos1969traffic}
Stella~C Dafermos and Frederick~T Sparrow.
\newblock The traffic assignment problem for a general network.
\newblock \emph{Journal of Research of the National Bureau of Standards B},
  73\penalty0 (2):\penalty0 91--118, 1969.

\bibitem[Dawande et~al.(2001)Dawande, Keskinocak, Swaminathan, and
  Tayur]{dawande2001bipartite}
Milind Dawande, Pinar Keskinocak, Jayashankar~M Swaminathan, and Sridhar Tayur.
\newblock On bipartite and multipartite clique problems.
\newblock \emph{Journal of Algorithms}, 41\penalty0 (2):\penalty0 388--403,
  2001.

\bibitem[Dong and Zeng(2021)]{dong2020wasserstein}
Chaosheng Dong and Bo~Zeng.
\newblock Wasserstein distributionally robust inverse multiobjective
  optimization.
\newblock In \emph{AAAI Conference on Artificial Intelligence}, volume~35,
  pages 5914--5921, 2021.

\bibitem[Dong et~al.(2018)Dong, Chen, and Zeng]{dong2018generalized}
Chaosheng Dong, Yiran Chen, and Bo~Zeng.
\newblock Generalized inverse optimization through online learning.
\newblock In \emph{Advances in Neural Information Processing Systems}, pages
  86--95, 2018.

\bibitem[Duan and Wang(2011)]{duan2011heuristic}
Zhaoyang Duan and Lizhi Wang.
\newblock Heuristic algorithms for the inverse mixed integer linear programming
  problem.
\newblock \emph{Journal of Global Optimization}, 51\penalty0 (3):\penalty0
  463--471, 2011.

\bibitem[Erkin et~al.(2010)Erkin, Bailey, Maillart, Schaefer, and
  Roberts]{erkin2010eliciting}
Zeynep Erkin, Matthew~D Bailey, Lisa~M Maillart, Andrew~J Schaefer, and Mark~S
  Roberts.
\newblock Eliciting patients' revealed preferences: an inverse markov decision
  process approach.
\newblock \emph{Decision Analysis}, 7\penalty0 (4):\penalty0 358--365, 2010.

\bibitem[Esfahani and Kuhn(2018)]{esfahani2018dro}
Peyman~Mohajerin Esfahani and Daniel Kuhn.
\newblock Data-driven distributionally robust optimization using the
  wasserstein metric: Performance guarantees and tractable reformulations.
\newblock \emph{Mathematical Programming}, 171\penalty0 (1):\penalty0 115--166,
  2018.

\bibitem[Esfahani et~al.(2018)Esfahani, Shafieezadeh-Abadeh, Hanasusanto, and
  Kuhn]{esfahani2018data}
Peyman~Mohajerin Esfahani, Soroosh Shafieezadeh-Abadeh, Grani~A Hanasusanto,
  and Daniel Kuhn.
\newblock Data-driven inverse optimization with imperfect information.
\newblock \emph{Mathematical Programming}, 167\penalty0 (1):\penalty0 191--234,
  2018.

\bibitem[Esfandeh et~al.(2016)Esfandeh, Kwon, and
  Batta]{esfandeh2016regulating}
Tolou Esfandeh, Changhyun Kwon, and Rajan Batta.
\newblock Regulating hazardous materials transportation by dual toll pricing.
\newblock \emph{Transportation Research Part B: Methodological}, 83:\penalty0
  20--35, 2016.

\bibitem[Fern{\'a}ndez-Blanco et~al.(2021{\natexlab{a}})Fern{\'a}ndez-Blanco,
  Morales, and Pineda]{fernandez2021forecasting}
Ricardo Fern{\'a}ndez-Blanco, Juan~Miguel Morales, and Salvador Pineda.
\newblock Forecasting the price-response of a pool of buildings via homothetic
  inverse optimization.
\newblock \emph{Applied Energy}, 290:\penalty0 116791, 2021{\natexlab{a}}.

\bibitem[Fern{\'a}ndez-Blanco et~al.(2021{\natexlab{b}})Fern{\'a}ndez-Blanco,
  Morales, Pineda, and Porras]{fernandez2019ev}
Ricardo Fern{\'a}ndez-Blanco, Juan~Miguel Morales, Salvador Pineda, and
  {\'A}lvaro Porras.
\newblock Inverse optimization with kernel regression: Application to the power
  forecasting and bidding of a fleet of electric vehicles.
\newblock \emph{Computers \& Operations Research}, 134:\penalty0 105405,
  2021{\natexlab{b}}.

\bibitem[Frenk and Schaible(2005)]{frenk2005fractional}
Johannes~BG Frenk and Siegfried Schaible.
\newblock Fractional programming.
\newblock In \emph{Handbook of generalized convexity and generalized
  monotonicity}, pages 335--386. Springer, 2005.

\bibitem[Fu et~al.(2018)Fu, Luo, and Levine]{fu2017learning}
Justin Fu, Katie Luo, and Sergey Levine.
\newblock Learning robust rewards with adverserial inverse reinforcement
  learning.
\newblock In \emph{International Conference on Learning Representations}, 2018.

\bibitem[Ghate(2015)]{ghate2015inverse}
Archis Ghate.
\newblock Inverse optimization in countably infinite linear programs.
\newblock \emph{Operations Research Letters}, 43\penalty0 (3):\penalty0
  231--235, 2015.

\bibitem[Ghate(2020{\natexlab{a}})]{ghate2020imputing}
Archis Ghate.
\newblock Imputing radiobiological parameters of the linear-quadratic
  dose-response model from a radiotherapy fractionation plan.
\newblock \emph{Physics in Medicine \& Biology}, 65\penalty0 (22):\penalty0
  225009, 2020{\natexlab{a}}.

\bibitem[Ghate(2020{\natexlab{b}})]{ghate2020inverse}
Archis Ghate.
\newblock Inverse optimization in semi-infinite linear programs.
\newblock \emph{Operations Research Letters}, 48\penalty0 (3):\penalty0
  278--285, 2020{\natexlab{b}}.

\bibitem[Ghobadi and Mahmoudzadeh(2021)]{ghobadi2021inferring}
Kimia Ghobadi and Houra Mahmoudzadeh.
\newblock Inferring linear feasible regions using inverse optimization.
\newblock \emph{European Journal of Operational Research}, 290\penalty0
  (3):\penalty0 829--843, 2021.

\bibitem[Girosi et~al.(1993)Girosi, Jones, and Poggio]{girosi1993priors}
Federico Girosi, Michael Jones, and Tomaso Poggio.
\newblock Priors stabilizers and basis functions: From regularization to
  radial, tensor and additive splines.
\newblock 1993.

\bibitem[Goli et~al.(2018)Goli, Boutilier, Craig, Sharpe, and
  Chan]{goli2018small}
Ali Goli, Justin~J Boutilier, Tim Craig, Michael~B Sharpe, and Timothy~CY Chan.
\newblock A small number of objective function weight vectors is sufficient for
  automated treatment planning in prostate cancer.
\newblock \emph{Physics in Medicine \& Biology}, 63\penalty0 (19):\penalty0
  195004, 2018.

\bibitem[G{\"u}ler and Hamacher(2010)]{guler2010capacity}
{\c{C}}i{\u{g}}dem G{\"u}ler and Horst~W Hamacher.
\newblock Capacity inverse minimum cost flow problem.
\newblock \emph{Journal of Combinatorial Optimization}, 19\penalty0
  (1):\penalty0 43--59, 2010.

\bibitem[Gupta and Zhang(2022)]{gupta2022decomposition}
Rishabh Gupta and Qi~Zhang.
\newblock Decomposition and adaptive sampling for data-driven inverse linear
  optimization.
\newblock \emph{INFORMS Journal on Computing}, 2022.

\bibitem[Hastie et~al.(2009)Hastie, Tibshirani, and
  Friedman]{hastie2009elements}
Trevor Hastie, Robert Tibshirani, and Jerome Friedman.
\newblock \emph{The elements of statistical learning: data mining, inference,
  and prediction}.
\newblock Springer Science \& Business Media, 2009.

\bibitem[Heuberger(2004)]{heuberger2004inverse}
Clemens Heuberger.
\newblock Inverse combinatorial optimization: A survey on problems, methods,
  and results.
\newblock \emph{Journal of Combinatorial Optimization}, 8\penalty0
  (3):\penalty0 329--361, 2004.

\bibitem[Houghtalen et~al.(2011)Houghtalen, Ergun, and
  Sokol]{houghtalen2011designing}
Lori Houghtalen, {\"O}zlem Ergun, and Joel Sokol.
\newblock Designing mechanisms for the management of carrier alliances.
\newblock \emph{Transportation Science}, 45\penalty0 (4):\penalty0 465--482,
  2011.

\bibitem[Hu et~al.(2012)Hu, Mitchell, Pang, and Yu]{hu2012linear}
Jing Hu, John~E Mitchell, Jong-Shi Pang, and Bin Yu.
\newblock On linear programs with linear complementarity constraints.
\newblock \emph{Journal of Global Optimization}, 53\penalty0 (1):\penalty0
  29--51, 2012.

\bibitem[Iyengar and Kang(2005)]{iyengar2005inverse}
Garud Iyengar and Wanmo Kang.
\newblock Inverse conic programming with applications.
\newblock \emph{Operations Research Letters}, 33\penalty0 (3):\penalty0
  319--330, 2005.

\bibitem[Kaipio and Somersalo(2006)]{kaipio2006statistical}
Jari Kaipio and Erkki Somersalo.
\newblock \emph{Statistical and computational inverse problems}, volume 160.
\newblock Springer Science \& Business Media, 2006.

\bibitem[Kang and Recker(2013)]{kang2013location}
Jee~Eun Kang and Will Recker.
\newblock The location selection problem for the household activity pattern
  problem.
\newblock \emph{Transportation Research Part B: Methodological}, 55:\penalty0
  75--97, 2013.

\bibitem[Keshavarz et~al.(2011)Keshavarz, Wang, and
  Boyd]{keshavarz2011imputing}
Arezou Keshavarz, Yang Wang, and Stephen Boyd.
\newblock Imputing a convex objective function.
\newblock In \emph{IEEE International Symposium on Intelligent Control (ISIC)},
  pages 613--619. IEEE, 2011.

\bibitem[Koenker and Hallock(2001)]{koenker2001quantile}
Roger Koenker and Kevin~F Hallock.
\newblock Quantile regression.
\newblock \emph{Journal of Economic Perspectives}, 15\penalty0 (4):\penalty0
  143--156, 2001.

\bibitem[Lamperski and Schaefer(2015)]{lamperski2015polyhedral}
Jourdain~B Lamperski and Andrew~J Schaefer.
\newblock A polyhedral characterization of the inverse-feasible region of a
  mixed-integer program.
\newblock \emph{Operations Research Letters}, 43\penalty0 (6):\penalty0
  575--578, 2015.

\bibitem[Lasserre(2009)]{lasserre2009linear}
Jean-Bernard Lasserre.
\newblock \emph{Linear and integer programming vs linear integration and
  counting: a duality viewpoint}.
\newblock Springer Science \& Business Media, 2009.

\bibitem[Lee et~al.(2013)Lee, Hammad, Chan, Craig, and
  Sharpe]{lee2013predicting}
Taewoo Lee, Muhannad Hammad, Timothy~CY Chan, Tim Craig, and Michael~B Sharpe.
\newblock Predicting objective function weights from patient anatomy in
  prostate imrt treatment planning.
\newblock \emph{Medical Physics}, 40\penalty0 (12):\penalty0 121706, 2013.

\bibitem[Levine et~al.(2011)Levine, Popovic, and Koltun]{levine2011nonlinear}
Sergey Levine, Zoran Popovic, and Vladlen Koltun.
\newblock Nonlinear inverse reinforcement learning with gaussian processes.
\newblock \emph{Advances in Neural Information Processing Systems},
  24:\penalty0 19--27, 2011.

\bibitem[Li(2021)]{li2021inverse}
Jonathan Yu-Meng Li.
\newblock Inverse optimization of convex risk functions.
\newblock \emph{Management Science}, 67\penalty0 (11):\penalty0 7113--7141,
  2021.

\bibitem[Li et~al.(2018)Li, Zhang, and Du]{li2018partial}
Xianyue Li, Zhao Zhang, and Ding-Zhu Du.
\newblock Partial inverse maximum spanning tree in which weight can only be
  decreased under $l_p$-norm.
\newblock \emph{Journal of Global Optimization}, 70\penalty0 (3):\penalty0
  677--685, 2018.

\bibitem[Liu et~al.(2022)Liu, Siddiq, and Zhang]{liu2022planning}
Sheng Liu, Auyon Siddiq, and Jingwei Zhang.
\newblock Planning bike lanes with data: Ridership, congestion, and path
  selection.
\newblock \emph{SSRN}, 2022.

\bibitem[Lopes et~al.(2009)Lopes, Melo, and Montesano]{lopes2009active}
Manuel Lopes, Francisco Melo, and Luis Montesano.
\newblock Active learning for reward estimation in inverse reinforcement
  learning.
\newblock In \emph{Joint European Conference on Machine Learning and Knowledge
  Discovery in Databases}, pages 31--46. Springer, 2009.

\bibitem[Lu et~al.(2018)Lu, Wang, Wang, Ai, and Wang]{lu2018data}
Tianguang Lu, Zhaoyu Wang, Jianhui Wang, Qian Ai, and Chong Wang.
\newblock A data-driven stackelberg market strategy for demand response-enabled
  distribution systems.
\newblock \emph{IEEE Transactions on Smart Grid}, 10\penalty0 (3):\penalty0
  2345--2357, 2018.

\bibitem[Lucas(1971)]{lucas1971some}
William~F Lucas.
\newblock Some recent developments in n-person game theory.
\newblock \emph{{SIAM} Review}, 13\penalty0 (4):\penalty0 491--523, 1971.

\bibitem[Marcotte et~al.(2009)Marcotte, Mercier, Savard, and
  Verter]{marcotte2009toll}
Patrice Marcotte, Anne Mercier, Gilles Savard, and Vedat Verter.
\newblock Toll policies for mitigating hazardous materials transport risk.
\newblock \emph{Transportation Science}, 43\penalty0 (2):\penalty0 228--243,
  2009.

\bibitem[Mostafaee et~al.(2016)Mostafaee, Hlad{\'\i}k, and
  {\v{C}}ern{\`y}]{mostafaee2016inverse}
Amin Mostafaee, Milan Hlad{\'\i}k, and Michal {\v{C}}ern{\`y}.
\newblock Inverse linear programming with interval coefficients.
\newblock \emph{Journal of Computational and Applied Mathematics},
  292:\penalty0 591--608, 2016.

\bibitem[Ng and Russell(2000)]{ng2000algorithms}
Andrew~Y Ng and Stuart~J Russell.
\newblock Algorithms for inverse reinforcement learning.
\newblock In \emph{International Conference on Machine Learning}, pages
  663--670, 2000.

\bibitem[Ng et~al.(1999)Ng, Harada, and Russell]{ng1999policyinvariance}
Andrew~Y Ng, Daishi Harada, and Stuart~J. Russell.
\newblock Policy invariance under reward transformations: theory and
  application to reward shaping.
\newblock In \emph{International Conference on Machine Learning}, volume~99,
  pages 278--287, 1999.

\bibitem[Nourollahi and Ghate(2019)]{nourollahi2019inverse}
Sevnaz Nourollahi and Archis Ghate.
\newblock Inverse optimization in minimum cost flow problems on countably
  infinite networks.
\newblock \emph{Networks}, 73\penalty0 (3):\penalty0 292--305, 2019.

\bibitem[Orth et~al.(2010)Orth, Thiele, and Palsson]{orth2010flux}
Jeffrey~D Orth, Ines Thiele, and Bernhard~{\O} Palsson.
\newblock What is flux balance analysis?
\newblock \emph{Nature biotechnology}, 28\penalty0 (3):\penalty0 245--248,
  2010.

\bibitem[Patriksson(2015)]{patriksson2015traffic}
Michael Patriksson.
\newblock \emph{The traffic assignment problem: models and methods}.
\newblock Courier Dover Publications, 2015.

\bibitem[Puterman(1990)]{puterman1990markov}
Martin~L Puterman.
\newblock Markov decision processes.
\newblock \emph{Handbooks in operations research and management science},
  2:\penalty0 331--434, 1990.

\bibitem[Ramachandran and Amir(2007)]{ramachandran2007bayesian}
Deepak Ramachandran and Eyal Amir.
\newblock Bayesian inverse reinforcement learning.
\newblock \emph{Urbana}, 51\penalty0 (61801):\penalty0 1--4, 2007.

\bibitem[Rathore and Jakhar(2021)]{rathore2021differential}
Himanshu Rathore and Suresh~Kumar Jakhar.
\newblock Differential carbon tax policy in aviation: One stone that kills two
  birds?
\newblock \emph{Journal of Cleaner Production}, 296:\penalty0 126479, 2021.

\bibitem[Ratliff et~al.(2014)Ratliff, Jin, Konstantakopoulos, Spanos, and
  Sastry]{ratliff2014social}
Lillian~J Ratliff, Ming Jin, Ioannis~C Konstantakopoulos, Costas Spanos, and
  S~Shankar Sastry.
\newblock Social game for building energy efficiency: Incentive design.
\newblock In \emph{2014 52nd Annual Allerton Conference on Communication,
  Control, and Computing (Allerton)}, pages 1011--1018. IEEE, 2014.

\bibitem[Ratliff et~al.(2006)Ratliff, Bagnell, and
  Zinkevich]{ratliff2006maximum}
Nathan~D Ratliff, J~Andrew Bagnell, and Martin~A Zinkevich.
\newblock Maximum margin planning.
\newblock In \emph{International Conference on Machine learning}, pages
  729--736, 2006.

\bibitem[Risanger et~al.(2020)Risanger, Fleten, and
  Gabriel]{risanger2020inverse}
Simon Risanger, Stein-Erik Fleten, and Steven~Adam Gabriel.
\newblock Inverse equilibrium analysis of oligopolistic electricity markets.
\newblock \emph{IEEE Transactions on Power Systems}, 2020.

\bibitem[Rockafellar and Uryasev(2002)]{rockafellar2002conditional}
R~Tyrrell Rockafellar and Stanislav Uryasev.
\newblock Conditional value-at-risk for general loss distributions.
\newblock \emph{Journal of banking \& finance}, 26\penalty0 (7):\penalty0
  1443--1471, 2002.

\bibitem[R{\"o}nnqvist et~al.(2017)R{\"o}nnqvist, Svenson, Flisberg, and
  J{\"o}nsson]{ronnqvist2017calibrated}
Mikael R{\"o}nnqvist, Gunnar Svenson, Patrik Flisberg, and Lars-Erik
  J{\"o}nsson.
\newblock Calibrated route finder: Improving the safety, environmental
  consciousness, and cost effectiveness of truck routing in sweden.
\newblock \emph{Interfaces}, 47\penalty0 (5):\penalty0 372--395, 2017.

\bibitem[Ruiz et~al.(2013)Ruiz, Conejo, and Bertsimas]{ruiz2013revealing}
Carlos Ruiz, Antonio~J Conejo, and Dimitris~J Bertsimas.
\newblock Revealing rival marginal offer prices via inverse optimization.
\newblock \emph{IEEE Transactions on Power Systems}, 28\penalty0 (3):\penalty0
  3056--3064, 2013.

\bibitem[Russell(1998)]{russell1998learning}
Stuart Russell.
\newblock Learning agents for uncertain environments.
\newblock In \emph{Conference on Computational Learning Theory}, pages
  101--103, 1998.

\bibitem[Ruszczy{\'n}ski and Shapiro(2006)]{ruszczynski2006optimization}
Andrzej Ruszczy{\'n}ski and Alexander Shapiro.
\newblock Optimization of convex risk functions.
\newblock \emph{Mathematics of Operations Research}, 31\penalty0 (3):\penalty0
  433--452, 2006.

\bibitem[Saez-Gallego and Morales(2017)]{saez2017short}
Javier Saez-Gallego and Juan~M Morales.
\newblock Short-term forecasting of price-responsive loads using inverse
  optimization.
\newblock \emph{IEEE Transactions on Smart Grid}, 9\penalty0 (5):\penalty0
  4805--4814, 2017.

\bibitem[Saez-Gallego et~al.(2016)Saez-Gallego, Morales, Zugno, and
  Madsen]{saez2016data}
Javier Saez-Gallego, Juan~M Morales, Marco Zugno, and Henrik Madsen.
\newblock A data-driven bidding model for a cluster of price-responsive
  consumers of electricity.
\newblock \emph{IEEE Transactions on Power Systems}, 31\penalty0 (6):\penalty0
  5001--5011, 2016.

\bibitem[Sayre and Ruan(2014)]{sayre2014automatic}
GA~Sayre and D~Ruan.
\newblock Automatic treatment planning with convex imputing.
\newblock In \emph{Journal of Physics: Conference Series}, volume 489, page
  012058. IOP Publishing, 2014.

\bibitem[Schaefer(2009)]{schaefer2009inverse}
Andrew~J Schaefer.
\newblock Inverse integer programming.
\newblock \emph{Optimization Letters}, 3\penalty0 (4):\penalty0 483--489, 2009.

\bibitem[Scholkopf and Smola(2001)]{scholkopf2001learning}
Bernhard Scholkopf and Alexander~J Smola.
\newblock \emph{Learning with kernels: support vector machines, regularization,
  optimization, and beyond}.
\newblock MIT press, 2001.

\bibitem[Shahmoradi and Lee(2021)]{shahmoradi2021quantile}
Zahed Shahmoradi and Taewoo Lee.
\newblock Quantile inverse optimization: Improving stability in inverse linear
  programming.
\newblock \emph{Operations Research}, 2021.

\bibitem[Sutton and Barto(2018)]{sutton2018reinforcement}
Richard~S Sutton and Andrew~G Barto.
\newblock \emph{Reinforcement learning: An introduction}.
\newblock MIT press, 2018.

\bibitem[Tarantola(2005)]{tarantola2005inverse}
Albert Tarantola.
\newblock \emph{Inverse Problem Theory and Methods for Model Parameter
  Estimation}.
\newblock SIAM, 2005.

\bibitem[Taskar et~al.(2005)Taskar, Chatalbashev, Koller, and
  Guestrin]{taskar2005learning}
Ben Taskar, Vassil Chatalbashev, Daphne Koller, and Carlos Guestrin.
\newblock Learning structured prediction models: A large margin approach.
\newblock In \emph{International Conference on Machine Learning}, pages
  896--903, 2005.

\bibitem[Tayyebi and Sepasian(2020)]{tayyebi2020partial}
Javad Tayyebi and Ali~Reza Sepasian.
\newblock Partial inverse min--max spanning tree problem.
\newblock \emph{Journal of Combinatorial Optimization}, 40\penalty0
  (4):\penalty0 1075--1091, 2020.

\bibitem[Troutt et~al.(2008)Troutt, Brandyberry, Sohn, and
  Tadisina]{ref:troutt_ejor08}
M.~D. Troutt, A.~A. Brandyberry, .C. Sohn, and S.~K. Tadisina.
\newblock Linear programming system identification: The general nonnegative
  parameters case.
\newblock \emph{European Journal of Operational Research}, 185:\penalty0
  63--75, 2008.

\bibitem[Troutt et~al.(2005)Troutt, Tadisina, Sohn, and
  Brandyberry]{troutt2005linear}
Marvin~D Troutt, Suresh~K Tadisina, Changsoo Sohn, and Alan~A Brandyberry.
\newblock Linear programming system identification.
\newblock \emph{European Journal of Operational Research}, 161\penalty0
  (3):\penalty0 663--672, 2005.

\bibitem[Troutt et~al.(2006)Troutt, Pang, and Hou]{ref:troutt_ms06}
Marvin~D Troutt, W.-K. Pang, and S.-H. Hou.
\newblock Behavioral estimation of mathematical programming objective function
  coefficients.
\newblock \emph{Management Science}, 53\penalty0 (3):\penalty0 422--434, 2006.

\bibitem[Turner and Chan(2013)]{turner2013examining}
Sarina~DO Turner and Timothy~CY Chan.
\newblock Examining the leed rating system using inverse optimization.
\newblock \emph{Journal of Solar Energy Engineering}, 135\penalty0
  (4):\penalty0 040901, 2013.

\bibitem[Utz et~al.(2014)Utz, Wimmer, Hirschberger, and Steuer]{utz2014tri}
Sebastian Utz, Maximilian Wimmer, Markus Hirschberger, and Ralph~E Steuer.
\newblock Tri-criterion inverse portfolio optimization with application to
  socially responsible mutual funds.
\newblock \emph{European Journal of Operational Research}, 234\penalty0
  (2):\penalty0 491--498, 2014.

\bibitem[Wang(2009)]{wang2009cutting}
Lizhi Wang.
\newblock Cutting plane algorithms for the inverse mixed integer linear
  programming problem.
\newblock \emph{Operations Research Letters}, 37\penalty0 (2):\penalty0
  114--116, 2009.

\bibitem[Wang(2013)]{wang2013branch}
Lizhi Wang.
\newblock Branch-and-bound algorithms for the partial inverse mixed integer
  linear programming problem.
\newblock \emph{Journal of Global Optimization}, 55\penalty0 (3):\penalty0
  491--506, 2013.

\bibitem[Wei and Vaze(2018)]{wei2018modeling}
Keji Wei and Vikrant Vaze.
\newblock Modeling crew itineraries and delays in the national air
  transportation system.
\newblock \emph{Transportation Science}, 52\penalty0 (5):\penalty0 1276--1296,
  2018.

\bibitem[Yang and Zhang(2007)]{yang2007partial}
Xiaoguang Yang and Jianzhong Zhang.
\newblock Partial inverse assignment problems under l1 norm.
\newblock \emph{Operations Research Letters}, 35\penalty0 (1):\penalty0 23--28,
  2007.

\bibitem[You et~al.(2016)You, Chow, and Ritchie]{you2016inverse}
Soyoung~Iris You, Joseph~YJ Chow, and Stephen~G Ritchie.
\newblock Inverse vehicle routing for activity-based urban freight forecast
  modeling and city logistics.
\newblock \emph{Transportmetrica A: Transport Science}, 12\penalty0
  (7):\penalty0 650--673, 2016.

\bibitem[Yu et~al.(2021)Yu, Wang, and Dong]{yu2021learning}
Shi Yu, Haoran Wang, and Chaosheng Dong.
\newblock Learning risk preferences from investment portfolios using inverse
  optimization.
\newblock \emph{arXiv preprint arXiv:2010.01687}, 2021.

\bibitem[Zhang and Liu(1996)]{zhang1996calculating}
Jianzhong Zhang and Zhenhong Liu.
\newblock Calculating some inverse linear programming problems.
\newblock \emph{Journal of Computational and Applied Mathematics}, 72\penalty0
  (2):\penalty0 261--273, 1996.

\bibitem[Zhang and Xu(2010)]{zhang2010inverse}
Jianzhong Zhang and Chengxian Xu.
\newblock Inverse optimization for linearly constrained convex separable
  programming problems.
\newblock \emph{European Journal of Operational Research}, 200\penalty0
  (3):\penalty0 671--679, 2010.

\bibitem[Zhang and Zhang(2010)]{zhang2010augmented}
Jianzhong Zhang and Liwei Zhang.
\newblock An augmented lagrangian method for a class of inverse quadratic
  programming problems.
\newblock \emph{Applied Mathematics and Optimization}, 61\penalty0
  (1):\penalty0 57, 2010.

\bibitem[Zhang et~al.(1996)Zhang, Liu, and Ma]{zhang1996inverse}
Jianzhong Zhang, Zhenhong Liu, and Zhongfan Ma.
\newblock On the inverse problem of minimum spanning tree with partition
  constraints.
\newblock \emph{Mathematical Methods of Operations Research}, 44\penalty0
  (2):\penalty0 171--187, 1996.

\bibitem[Zhang and Paschalidis(2017)]{zhang2017data}
Jing Zhang and Ioannis~Ch Paschalidis.
\newblock Data-driven estimation of travel latency cost functions via inverse
  optimization in multi-class transportation networks.
\newblock In \emph{2017 IEEE 56th Annual Conference on Decision and Control
  (CDC)}, pages 6295--6300. IEEE, 2017.

\bibitem[Zhang et~al.(2016{\natexlab{a}})Zhang, Pourazarm, Cassandras, and
  Paschalidis]{zhang2016price}
Jing Zhang, Sepideh Pourazarm, Christos~G Cassandras, and Ioannis~Ch
  Paschalidis.
\newblock The price of anarchy in transportation networks by estimating user
  cost functions from actual traffic data.
\newblock In \emph{2016 IEEE 55th Conference on Decision and Control (CDC)},
  pages 789--794. IEEE, 2016{\natexlab{a}}.

\bibitem[Zhang et~al.(2018)Zhang, Pourazarm, Cassandras, and
  Paschalidis]{zhang2018price}
Jing Zhang, Sepideh Pourazarm, Christos~G Cassandras, and Ioannis~Ch
  Paschalidis.
\newblock The price of anarchy in transportation networks: Data-driven
  evaluation and reduction strategies.
\newblock \emph{Proceedings of the IEEE}, 106\penalty0 (4):\penalty0 538--553,
  2018.

\bibitem[Zhang et~al.(2016{\natexlab{b}})Zhang, Li, Lai, and
  Du]{zhang2016algorithms}
Zhao Zhang, Shuangshuang Li, Hong-Jian Lai, and Ding-Zhu Du.
\newblock Algorithms for the partial inverse matroid problem in which weights
  can only be increased.
\newblock \emph{Journal of Global Optimization}, 65\penalty0 (4):\penalty0
  801--811, 2016{\natexlab{b}}.

\bibitem[Zhao et~al.(2015)Zhao, Stettner, Reznik, Segr{\`e}, and
  Paschalidis]{ref:zhao_cdc15}
Qi~Zhao, Arion~I Stettner, Ed~Reznik, Daniel Segr{\`e}, and Ioannis~Ch
  Paschalidis.
\newblock Learning cellular objectives from fluxes by inverse optimization.
\newblock In \emph{2015 54th IEEE Conference on Decision and Control (CDC)},
  pages 1271--1276, Dec 2015.

\bibitem[Zhao et~al.(2016)Zhao, Stettner, Reznik, Paschalidis, and
  Segr{\`e}]{zhao2016mapping}
Qi~Zhao, Arion~I Stettner, Ed~Reznik, Ioannis~Ch Paschalidis, and Daniel
  Segr{\`e}.
\newblock Mapping the landscape of metabolic goals of a cell.
\newblock \emph{Genome Biology}, 17\penalty0 (1):\penalty0 1--11, 2016.

\bibitem[Zheng et~al.(2015{\natexlab{a}})Zheng, Gao, Yang, and
  Sun]{zheng2015network}
Jianfeng Zheng, Ziyou Gao, Dong Yang, and Zhuo Sun.
\newblock Network design and capacity exchange for liner alliances with fixed
  and variable container demands.
\newblock \emph{Transportation Science}, 49\penalty0 (4):\penalty0 886--899,
  2015{\natexlab{a}}.

\bibitem[Zheng et~al.(2015{\natexlab{b}})Zheng, Sun, and Gao]{zheng2015empty}
Jianfeng Zheng, Zhuo Sun, and Ziyou Gao.
\newblock Empty container exchange among liner carriers.
\newblock \emph{Transportation Research Part E: Logistics and Transportation
  Review}, 83:\penalty0 158--169, 2015{\natexlab{b}}.

\bibitem[Zhou et~al.(2011)Zhou, Wang, and McCalley]{zhou2011designing}
Ying Zhou, Lizhi Wang, and James~D McCalley.
\newblock Designing effective and efficient incentive policies for renewable
  energy in generation expansion planning.
\newblock \emph{Applied Energy}, 88\penalty0 (6):\penalty0 2201--2209, 2011.

\bibitem[Ziebart et~al.(2008)Ziebart, Maas, Bagnell, and
  Dey]{ziebart2008maximum}
Brian~D Ziebart, Andrew~L Maas, J~Andrew Bagnell, and Anind~K Dey.
\newblock Maximum entropy inverse reinforcement learning.
\newblock In \emph{AAAI}, volume~8, pages 1433--1438. Chicago, IL, USA, 2008.

\bibitem[Zinkevich(2003)]{zinkevich2003online}
Martin Zinkevich.
\newblock Online convex programming and generalized infinitesimal gradient
  ascent.
\newblock In \emph{International Conference on Machine Learning}, pages
  928--936, 2003.

\end{thebibliography}

\ECSwitch


\ECHead{Electronic Companion}


\section{Additional Details for Select Applications}

In this section we provide additional mathematical details of some applications that are discussed in the main body of the paper.

\subsection{Toll design for risk mitigation}\label{ECsubsec:toll_design}


Let $\mG(\mV, \mA)$ denote a network with nodes $\mV$ and arcs $\mA$. Assume the carrier's problem is a multi-commodity flow problem over $\mG$ minimizing the cost of transporting $S$ types of hazardous materials between a set of source-sink pairs. For each material type $s \in \{1, \dots, S\}$, let $c_a^s$ denote the cost of transporting the material $s$ on arc $a \in \set{A}$ and let $x_a^s \in \{0, 1\}$ be a binary variable that equals one if material $s$ is transported on arc $a$. Given resource and flow balance constraints in the form of $\set{X}$, the carrier's multi-commodity flow problem is
\begin{align*}
\min_{\bx} \; \left \{ \sum_{s = 1}^S \sum_{a \in \mA} c_{a}^s x_{a}^s \; \bigg | \; \bx \in \mX \right \}.
\end{align*}
We use $\Xopt(\bc)$ to denote the set of optimal solutions to this problem under transportation costs $\bc$. Now suppose the estimated risk posed by transporting material $s$ over arc $a$ is $\rho^s_{a}$. To find a set of arc tolls $\btheta \in \bTheta$ that makes the minimum-risk solution optimal for the carrier, we can solve
\begin{align*}
\min_{\bx, \btheta} \; \left \{ \sum_{s = 1}^S \, \sum_{a \in \mA} \rho_{a}^s x_a^s \; \bigg | \; \bx \in \Xopt(\bc + \btheta), \; \btheta \geq \mathbf{0} \right \}.
\end{align*}

To solve this problem, we can use the master-subproblem technique, described in Section \ref{sec:classic_IO_apps}, which decouples the computation of the lowest-risk solution and the corresponding tolls into two sequential linear programs. Specifically, we first solve the master problem 
\begin{align*}
\min_{\bx} \; \left \{ \sum_{s=1}^S \, \sum_{a \in\mA} \rho_{a}^s x_{a}^s \; \bigg | \; \bx \in \mX \right \},
\end{align*}
to yield a feasible flow $\bhx$ that minimizes the risk. Then we solve an inverse optimization subproblem to obtain the tolls for which $\bhx$ is now also optimal for the carrier, i.e.,
\begin{align*}
\min_{\btheta} \; \left \{ \sum_{s=1}^S \, \sum_{a\in\mA} \theta_{a}^s \hat x_{a}^s \; \bigg | \; \bhx \in \Xopt(\bc + \btheta), \; \btheta \geq \mathbf{0} \right \}.
\end{align*}
Note that this inverse problem computes the \emph{minimum} amount of tolls that must be collected from the carrier to induce the minimum-risk solution.

\subsection{Estimating traffic equilibrium models}\label{subEC:traffic_VI}

We first describe the traffic assignment model (i.e., the forward problem which produces equilibrium conditions), then provide details on inverse optimization framework used to estimate this model.

\paragraph{The forward problem.} Let $\mG = (\mV, \mA)$ define a road network consisting of a set of $m$ nodes $\mV$ and a set of $n$ directed arcs $\mA$, and let $\mP = \mV \times \mV$ denote the set of all node pairs in the network. Let $\bA \in \{0,1,-1\}^{m \times n}$ denote the node-arc incidence matrix. The value $d_{i,j}$ denotes the demand between each pair of nodes $(i,j) \in \mP$, and we define a corresponding vector $\bb^{i,j} \in \mathbb{R}^m$ where $\bb^{i,j}$ is a vector of all zeros except for elements $i$ and $j$ where $b^{i,j}_i = d_{i,j}$ and $b^{i,j}_j = - d_{i,j}$.  Let $\bx^{i,j} = (x^{i,j}_1, \ldots, x^{i,j}_{n})$ denote a vector of flows on each arc corresponding to demand between node $i$ and $j$. Finally, let $\mX$ be the set of aggregate flows satisfying all pairwise demand, i.e.,
\begin{align*} 
    \mX = \left\{ 
    \bx \; \Bigg| \ \bx = \sum_{(i,j) \in \mP} \bx^{i,j}, \; \bA \bx^{i,j} = \bb^{i,j}, \; \bx^{i,j} \geq \mathbf{0}, \ \ \forall (i,j) \in \mP \right\}.
\end{align*}

For a given flow $\bx \in \mX$, let $\bff(\bx) = (f_1(x_1), \ldots, f_{n}(x_n))$ be the vector of marginal travel time costs on each arc under $\bx$. In the transportation literature, these costs are commonly modeled as
\begin{align*}
f_a(x_a) = c_a \, g\left(\frac{x_a}{m_a}\right),
\end{align*}
where $c_a$ is a parameter measuring the uncongested travel time of arc $a$, $m_a$ is a parameter corresponding to the ``capacity" of an arc (e.g., the number of lanes and the speed limit) and $g(x)$ is a monotonically increasing function modeling congestion effects. Thus, arc costs are differentiated only by $c_a$ and $m_a$, where they increase when $c_a$ or $x_a$ increase, or when $m_a$ decreases. 

Given a demand matrix and known cost functions, different flow solutions can be computed using various transportation models which differ by the underlying assumptions made. A widely studied model is the traffic assignment problem \citep{dafermos1969traffic, patriksson2015traffic}, which produces solutions describing decentralized traffic flow, where every driver has complete information on costs and makes their routing decisions independent of decisions made by other drivers. This defines the forward model
\begin{align}\label{eq:TAP}
    \min_{\bx} \left\{ \sum_{a = 1}^n \int_0^{x_a} f_a(s) \; ds \; \Bigg | \; \bx \in \mX\right\}.
\end{align}
Since $\bff(\bx)$ represents the marginal travel time cost, it is the gradient of the objective function in the forward model.
This model is considered to be a good approximation for describing real-world traffic movement. However, for this model to be of practical use, $\bff(\bx)$, and more specifically the $g(.)$ function, must be known. The standard approach in transportation modeling is to assume a specific $g(.)$ function, a common choice being $g(\frac{x_a}{m_a}) = (1 + 1.15(\frac{x_a}{m_a})^4)$, as well as a specific value of $m_a$~\citep{chow2014nonlinear, bertsimas2015data}. However, assuming such functions, rather than deriving from data, can result in poor modeling and out-of-sample performance.

\paragraph{The inverse problem.}

Suppose we observe spatiotemporal data of transportation flows and demands over $T$ periods, denoted by $\{(\bhx_t,\mX_t)\}_{t=1}^T$ with $\bhx_t \in \mX_t$ for all $t$. In the inverse problem, we estimate the marginal travel time cost functions $\bff(\bx, \btheta)$ where $f_a(x_a, \btheta) = c_a g(\frac{x_a}{m_a}, \btheta)$ are now parametrized by $\btheta$. Following \citet{bertsimas2015data} and \citet{zhang2018price}, we model $g(\frac{x_a}{m_a}, \btheta)$ using polynomial kernels
\begin{align}\label{app:poly_kernel}
g\left(\frac{x_{a}}{m_a}, \btheta \right) = 1 + \theta_1 \left( \frac{x_a}{m_a}\right) + \ldots + \theta_n \left(\frac{x_a}{m_a}\right)^n.
\end{align}
In order to estimate $\btheta$ using the data set $\{(\bhx_t,\mX_t)\}_{t=1}^T$, \citet{bertsimas2015data} leverage an important property from the transportation literature, that when $\bff(\bx, \btheta)$ is strongly monotonic and continuously differentiable, the optimal traffic flows satisfy a Wardrop equilibrium~\citep{dafermos1969traffic, patriksson2015traffic}. That is, the unique optimal solution $\bx^*$ to $\eqref{eq:TAP}$ is also the unique solution to the following set of variational inequalities
\begin{align*}
\bff(\bx^*, \btheta)^\top (\bx - \bx^*) \geq 0, \quad \forall \bx \in \mX.
\end{align*}
The Wardrop equilibrium ensures that for every $(i,j) \in \mP$, and for any route between the pair with positive flow in $\bx^*$, the cost of traveling along that route is no greater than the cost of traveling along any other feasible routes between $i$ and $j$ \citep{patriksson2015traffic}. Practically, this implies that every driver acts selfishly and takes the lowest cost route. 

With this connection to variational inequalities, and with the observation that the forward problem is conic, we can employ the Inverse Variational Inequality problem (see Section \ref{sec:datadriven_vi}, and in particular Theorem \ref{thm:ivi_duality_solution}) for problem~\eqref{eq:TAP} to estimate the arc cost functions by solving the following inverse optimization problem
\begin{subequations}
\begin{align*}
    \min_{\bepsilon, \blambda, \btheta} \quad &  \norm{\bepsilon}_p + \kappa \sum_{i = 1}^n \delta_i \theta_i^2 \\ 
    \st \quad &  \sum_{a \in \mA} c_a \hat x_{a,t} \, g\left(\frac{\hat x_{a,t}}{m_a}, \btheta \right)  - \sum_{(i,j) \in \mP} (\bb^{ij}_{t})^\tpose \blambda^{ij}_{t} \: \leq \: \epsilon_t, \quad \forall \, t \in \{1, \ldots, T\},\\[0.02cm]
    & \bA_{a}^\tpose \blambda_{t}^{ij} - c_a \, g\left(\frac{\hat x_{a,t}}{m_a}, \btheta \right) \: \leq \: 0, \quad \forall \, a \in \mA, \; t \in \{1, \ldots, T\}, \\
    & \btheta \in \bTheta.
\end{align*}
\end{subequations}
The vector $\bA_a$ denotes the a-th row of matrix $\bA$, and the variables $\blambda_t^{ij} \in \mathbb{R}^{m}$ are duals associated with data point $t$ and node-pair $(i,j) \in \mP$. The parameters $\delta_i$ define a penalization term for polynomial kernels \citep[see][]{bertsimas2015data, zhang2018price}. The set $\bTheta$ includes constraints that impose monotonicity on $\bff(\cdot)$ over the observed flow ranges, either in the form of a general constraint $\btheta \geq \mathbf{0}$ or as a set of individual constraints on each arc in the form of $f_a(x_a) \geq f_a(\tilde{x}_a)$ for all observed  arc flows $x_a$ and $\tilde{x}_a$ where $x_a \geq \tilde{x}_a$. Note that because $g(\frac{x_a}{m_a}, \mathbf{0}) = 1$ in equation \eqref{app:poly_kernel}, we avoid inferring the ``trivial" objective vector $g(\cdot) = 0$ that would render all observed solutions equivalent and optimal. Finally, the regularization term $\kappa$ in the inverse problem can be tuned using cross-validation.

\citet{zhang2016price} and \citet{zhang2018price} show that the estimated cost functions can be used in a different model to measure network efficiency. Specifically, we can use $\{f_a(\cdot)\}_{a \in \mA}$ to solve a system-optimal routing model, which minimizes the total cumulative cost of travel for all drivers:
\begin{equation}\label{eq:SOP}
\min_{\bx} \left\{ \sum_{a \in \mA} x_a f_a(x_a) \; \Big | \; \bx \in \mX \right\}.
\end{equation}

Unlike solutions from model \eqref{eq:TAP}, solutions from model \eqref{eq:SOP} are Pareto efficient. The ratio of the total travel time between system-optimal and user-optimal solutions, known as the ``price of anarchy", provides a measure of the efficiency of the system. While the ratio has been studied extensively as a theoretical concept, the inverse optimization approach enable the authors to provide one of the first empirical estimates of this ratio. 

\subsection{Inferring constraints of market-clearing models}\label{subEC:market_clearing}

Consider a market with $J$ participants situated across $I$ different nodes. Let $v_j$ denote the total amount of electricity consumed ($v_j < 0$) or produced ($v_j > 0$) by participant $j \in \{1,\ldots,J\}$ who is situated at node $n(j)$. Let $S_j(v_j)$ describe an offer or bid function submitted by participant $j$, i.e., $S_j(v_j)$ (or $-S_j(v_j)$) denotes the price that a consumer (or producer) $j$ is willing to pay (or be paid) for $v_j$. We assume $S_j(v_j)$ is stepwise linear. Finally, let $x_i$ represent the total amount of electricity consumed or produced at node $i \in \{1, \ldots, I\}$. The market clearing model as defined in \cite{birge2017inverse} maximizes social welfare subject to constraints on $M$ different transmission links as well as constraints  $\bv \in \mV$ defining individual production and consumption, i.e.,
\begin{subequations}
\begin{align}
\max_{\bx, \bv} \quad &  \sum_{j=1}^J S_j(v_j)\\
\text{s.t.} \quad & \sum_{j : n(j) = i} v_j = x_i, \quad \forall i \in \{1, \dots, I\}, \label{cons:ece_nodes} \\
& \bA \bx \leq \bb \label{cons:ece_links}\\
& \bv \in \mV.
\end{align}
\end{subequations}
The matrix $\bA \in \mathbb{R}^{M \times I}$ and vector $\bb \in \mathbb{R}^{M}$ define the grid and the transmission capacities (based on DC power flow). The vector of optimal dual variables $\hat \bpi \in \mathbb{R}^I$ and $\hat \blambda \in \mathbb{R}^M$ corresponding to constraints \eqref{cons:ece_nodes} and \eqref{cons:ece_links} represent the prices of electricity at every node and the shadow prices of various capacity constraints, respectively; these are used to decide payments.

To recover the matrix $\bA$, \citet{birge2017inverse} assume a noise-free environment and use a publicly available data set $\{(\bhx_i, \hat \blambda_i, \hat \bpi_i)\}_{i=1}^N$ where $N \geq M$ to find a matrix $\bA$ and vectors $\bb_i$ that satisfy the following set of optimality conditions (\citet{birge2017inverse} include other constraints related to the physical transmission of electricity, which we omit here for simplicity): 
\begin{align*}
    \bpi_i = \hat\blambda_i \bA, \  \bA \bhx_i \leq \bb_i, \ \hat\blambda_i \odot (\bA\bhx_i - \bb_i) = \mathbf{0}, \quad \forall i \in \{1, \ldots, N\}.
\end{align*}
%


Finally, we note that the operations of electricity markets vary significantly across geographic regions,  
and these distinctions offer new research opportunities. For example, \citet{ruiz2013revealing} examine a market where the model constraints are published rather than the bids. The authors describe a duality-based inverse optimization model to infer stepwise bid functions of producers. 

\subsection{Clinical pathway concordance measurement} \label{sec:DPMEC}

Let $\mathcal{G}$ denote a graph with nodes $\mathcal{N}$ and arcs $\mathcal{A}$. The nodes describe the set of activities patients can undertake, including concordant activities like medical imaging and treatment, and discordant activities like emergency department visits and extra consultations. A walk through the graph accumulates costs along the arcs it traverses. Clinical pathways developed by experts are assumed to be shortest paths through $\mathcal{G}$. Implicit costs of arcs between activities can then be estimated with model~\eqref{model:IOclinpathway}, which identifies a single set of arc costs $\btheta$ that minimizes the aggregate sub-optimality of clinical pathways with respect to the difference between their costs and the shortest path cost.


While model~\eqref{model:IOclinpathway} considers the expert-defined clinical pathways as input, it does not consider actual patient data from either successful or unsuccessful clinical workflows. Consequently,~\citet{chan2022inverse} employ a second stage inverse problem to refine the cost vector $\btheta^*$ from model~\eqref{model:IOclinpathway} with real patient pathways.
Consider a set of patient-traversed pathways from patients who survived their cancer, $\hat\bx^s_1, \ldots, \hat\bx^s_S$ and a set of patient-traverse pathways from patients who died, $\hat\bx^d_1, \ldots, \hat\bx^d_D$. The refined problem penalizes the duality gaps with respect to patients who survived and encourages higher duality gaps for patients who died, while fixing the optimal duality gaps $\epsilon^{r*}_i$ from problem \eqref{model:IOclinpathway}. We can write this problem as
\begin{align} \label{model:patientref}
\begin{split}
\underset{\btheta, \blambda, \bepsilon^s, \bepsilon^d}{\text{min}} & \quad  \frac{D}{S}\sum_{j=1}^{S}\epsilon^s_j-\sum_{k=1}^{D}\epsilon^d_k \\
\textrm{s.t.} \; & \quad \bA^\tpose \blambda \le \btheta  \\
& \quad \btheta^\tpose \hat\bx^r_i=\bb^\tpose \blambda+\epsilon_i^{r*}, \quad \forall  i \in \{1, \ldots, N\} \\
& \quad \btheta^\tpose \hat\bx^s_j=\bb^\tpose \blambda+\epsilon^s_j, \quad  \forall j \in \{1, \ldots, S\} \\
& \quad \btheta^\tpose \hat\bx^d_k=\bb^\tpose \blambda+\epsilon^d_k, \quad  \forall k \in \{1, \ldots, D\} \\
& \quad \|\btheta\|_{\infty}= 1 \\
& \quad \bA \btheta = \bzero. 
\end{split}
\end{align}
%
The objective minimizes (maximizes) the aggregate sub-optimality with respect to the patients who survived (died). Because $\epsilon^{r*}_q$ is fixed, this model chooses among the optimal cost vectors from model~\eqref{model:IOclinpathway} to find one that maximizes the ``separation'' between the pathway costs of patients who survived from those who died. 

Finally, using the optimal cost vector $\btheta^*$ from problem~\eqref{model:patientref},~\citet{chan2022inverse} construct a concordance metric $\omega \in [0,1]$ as follows: 
\begin{align*}
    \omega(\hat\bx) = 1 - \frac{\btheta^{*\top}\hat\bx - \btheta^{*\top}\bx^*}{M(\hat\bx) - \btheta^{*\top}\hat\bx}.
\end{align*}
This metric measures the cost difference between a patient pathway $\hat\bx$ and a shortest path $\bx^*$, and normalizes it based on the cost difference between the longest walk with the same number steps as $\hat\bx$ (denoted $M(\hat\bx)$) and a shortest path. Using $\omega$, it becomes possible to rigorously score any patient pathway $\bhx$ against the clinical pathways. Using a real dataset of colon cancer patients, the authors establish a statistically significant association between concordance and mortality, even after adjusting for patient covariates, which supports the clinical meaningfulness of the metric.

An extension that has not been considered yet is the incorporate of cost of care into the concordance measurement. This can be done by modifying formulation~\eqref{model:patientref} to favor patient pathways that are lower cost (in terms of real dollar amounts of imaging, treatment, diagnostic tests, etc.), rather than separating patients by their survival outcome.

\subsection{Radiation therapy treatment planning}\label{sec:RTEC}

The optimization problem contains multiple objectives used to balance multiple (potentially dozens) conflicting objectives, such as escalating dose to the tumor while minimizing dose to the healthy organs. The overall objective is formed by taking a weighted combination of the various objectives $f(\bx) = \sum_{k=1}^K \theta_k f_k(\bx)$ where $\theta_k$ is the weight of the $k$-th objective $f_k(\bx)$ for $k \in \{1, \dots, K\}$. Assuming linear objectives (i.e., $f_j(\bx) = \bc^{j\top}\bx$), the objective function can be rewritten as $\btheta^\top \bC \bx$, where the $j$-th row of $\bC$ is $\bc^j$.

The traditional clinical procedure of designing treatments involves manually selecting the objective function weights, solving the optimization problem, evaluating the treatment using several quantitative and qualitative metrics, and then iterating, if needed. As an alternative, inverse optimization can infer appropriate weights from historical treatments~\citep{chan2014generalized}. We present a simplified version of this model below.

Let $\mathcal{B}$ be the sent of beamlets and $x_b$ be the intensity of beamlet $b$. Let $\mathcal{T}$ be the set of voxels (volumetric pixels) in the tumor and $\mathcal{V}$ be the set of all voxels. Let $\mathcal{O}_k$ be the set of healthy voxels corresponding to the $k$-th objective. Each objective $f_k(\bx)$ penalizes the total dose delivered above a threshold $\tau^k_v$ for each voxel $v$, i.e.,
\begin{align*}
    f_k(\bx) := \sum_{v \in \mathcal{O}_k}\max\left\{0, \sum_{b \in \mathcal{B}} D_{v,b}x_b - \tau^k_v\right\},
\end{align*}
where $D_{v,b}$ is the dose deposited to voxel $v$ by unit intensity of beamlet $b$. The complete formulation of the forward problem is
\begin{equation} \label{model:RTfwd}
\begin{alignedat}{1}
\underset{\bx}{\text{min}} & \quad \sum_{k=1}^K \theta_k \sum_{v \in \mathcal{O}_k}\max\left\{0, \sum_{b \in \mathcal{B}} D_{v,b}x_b - \tau^k_v\right\} \\
\textrm{s.t.} & \quad \sum_{b \in \mathcal{B}} D_{v,b}x_b \ge l_v, \quad \forall v \in \mathcal{T}, \\
& \quad \sum_{b \in \mathcal{B}} D_{v,b} x_b \le u_v, \quad \forall v \in \mathcal{V}, \\
& \quad \bx \in \mX,
\end{alignedat}
\end{equation}
where $l_v$ and $u_v$ denote lower and upper bound constraints on the tumor and healthy voxels, respectively, and $\mX$ describes a set of linear constraints on the intensity values including nonnegativity. 

Both the Absolute and Relative Sub-optimality loss functions (see Sections \ref{subsubsec:abs_suboptimality} and \ref{subsubsec:data-driven_IO_rdg}) have been used in the literature in the inverse formulation for the above forward problem.

\end{document}